\title{The Graph Isomorphism Problem:\\Local Certificates for Giant Action}
\author{Tim Frederik Seppelt}
\date{3.~Juli~2019}
\begin{document}

\begin{titlepage}
\linespread{1.5}
\vspace*{\fill}
\centering
\begin{LARGE}
\textsc{\thetitle}
\end{LARGE}
\vspace{2cm}

\vspace{1.5cm}

\begin{Large}
Bachelorarbeit in Mathematik \\
eingereicht an der Fakultät für Mathematik und Informatik\\
der Georg-August-Universität Göttingen\\
am \thedate\\
\end{Large}

\vspace{1cm}
Überarbeitete Fassung vom 23.~September~2019
\vspace{1cm}

\small{von}\\

\large{\theauthor}
\vspace{1cm}

\small{Erstgutachter:}\\

\large{Prof.\@ Dr.\@ Harald Andrés Helfgott}

\vspace{1cm}

\small{Zweitgutachter:}\\

\large{Prof.\@ Dr.\@ Stephan Waack}
\vspace*{\fill}
\end{titlepage}

\tableofcontents

\chapter{Introduction}
\label{sec:intro}
Graphs are fundamental combinatorial objects. Defined as a tuple $(V, E)$ of a set of vertices $V$ and a set of edges $E \subseteq V \times V$ connecting the vertices, they can be used to formalize structures in many areas of mathematics, computer science and other fields. For example, a graph could represent a molecule with atoms as vertices and covalent bonds as edges, or a social network with individuals as vertices and friendships as edges.

Given two graphs $X_1 = (V, E_1)$ and $X_2 = (V, E_2)$ on the same set of vertices $V$ of size $n$ one may ask whether these two graphs are essentially the same, i.e.\@ whether there exists a bijection $\pi: V \to V$ such that two vertices $v, w \in V$ are adjacent in $X_1$ if and only if their images $\pi(v)$ and $\pi(w)$ are adjacent in $X_2$. Being capable of answering this question efficiently would help us to search for chemical compounds in a database or to recognize patterns in social networks. Hence, it is desirable to design a fast algorithm which either computes such a bijection $\pi$ or confirms that no such $\pi$ exists -- in which case the two graphs are said to be not isomorphic. The \emph{Graph Isomorphism Problem} (GI) is the problem of computationally determining $\pi$ or confirming that no such $\pi$ exists.

Trivially, one could try to scan through all possible bijections and check whether they preserve adjacency. Since the total number of possible bijections is $n!$, this simplest strategy satisfies by Stirling's formula
\begin{equation} \label{eq:intro1}
T(n) \leq C n! \asymp \sqrt{n} \left( \frac{n}{e} \right)^n
\end{equation}
for a constant $C > 0$ and with $T(n)$ denoting the number of elementary operations required to obtain an answer given two graphs with $n$ vertices. This enormous growth behavior renders the trivial approach practically useless and theoretically uninteresting.

An open question in complexity theory is whether the Graph Isomorphism Problem can be decided in polynomial time, that is,
\begin{equation} \label{eq:intro2}
T(n) \leq n^C
\end{equation}
for a constant $C > 0$. In 2015, László Babai \cite{babai} proposed an algorithm which decides GI in quasi-polynomial time, that is,
\begin{equation} \label{eq:intro3}
T(n) \leq \exp \left((\log n)^C \right)
\end{equation}
for a constant $C > 0$. Babai's algorithm is based on the work of Eugene Luks \cite{luks}, who proved in 1981 that the isomorphicity of graphs with bounded degree can be tested in polynomial time. Pushing down the bound from exponential (\cref{eq:intro1}) to quasi-polynomial (\cref{eq:intro3}), was a huge leap. Nevertheless a polynomial bound (\cref{eq:intro2}) remains out of reach. Therefore, GI continues to be an interesting problem for exploring the connection between the two main complexity classes P and NP.

This thesis provides an explanation of Babai's algorithm with a particular focus on the case that cannot be dealt with by Luks' method. Group theoretic and combinatorial arguments are used to tackle this situation, which prevented significant progress for more than thirty years. The thesis extends the explanations provided by Harald Andrés Helfgott \cite{helfgott0}\footnote{For linguistic reasons, we will henceforth not refer to the French original \cite{helfgott0} but to its English translation \cite{helfgott}.} and by Babai \cite{babai,babai2}\footnote{The more recent \cite{babai2} represents an extended yet incomplete revision of the main article \cite{babai}. We will mostly refer to \cite{babai} except for one minor part of \cref{sec:agg-cert}.} himself. After introducing the relevant objects and some of their properties in \cref{sec:preliminaries}, we will describe important subroutines and general strategies which will be used repeatedly throughout the algorithm in \cref{sec:bblocks}. \Cref{sec:overview} gives an overview of the algorithm laying the ground work for \cref{sec:localcert} which treats Luks' barrier case with local certificates. In \cref{sec:complexity} we will justify the algorithms' quasi-polynomial complexity. We will reproduce Helfgott's stronger result that $C=3$ suffices in \cref{eq:intro3}. 

Even though the Graph Isomorphism Problem is of vast theoretical interest, the applications of Babai's algorithm in -- for instance -- chemistry or social sciences are limited. For practical use cases, faster heuristic methods suffice to test whether graphs are isomorphic or to find patterns in graphs. \cite{mckay} provides an overview of these strategies and their implementation in tools such as \emph{nauty} and \emph{Traces}.

\chapter{Preliminaries}
\label{sec:preliminaries}

In its most basic version the Graph Isomorphism Problem takes as an input two graphs and returns an adjacency-preserving bijection of the set of vertices or the confirmation that the two graphs are not isomorphic. In this section we want to describe a more general setting. In order to do so, we need some basic vocabulary. The starting point are permutation groups which arise from the bijections between the two graphs. We will then formalize the Graph Isomorphism Problem and finally state the definitions and properties of some combinatorial objects.

\section{Permutation Groups and their actions}
\label{sec:permgroup}
We state some notions for permutation groups and their actions. See \cite{dixon} for a comprehensive introduction.

\begin{defn}
Let $\Omega$ be a finite set. We denote the symmetric group acting with its canonical action on $\Omega$ as $\Sym(\Omega)$ and the alternating group respectively as $\Alt(\Omega)$. When simply $\Omega = \{1,\dots, n\}$, we denote these groups as $\Sym_n$ and $\Alt_n$. For $\omega \in \Omega$ and $\sigma \in \Sym(\Omega)$ we write $\omega^\sigma$ for $\sigma(\omega)$. We write $A^g \coloneqq \{a^g \mid a \in A\}$ for $A \subseteq \Omega$, $g \in G$ and $A^H \coloneqq \left\{ a^h \ \middle|\ a \in A,\ h \in H\right\}$ for $H \subseteq G$.
\end{defn}

We will write $(a\ b)$ for the transposition sending $a \mapsto b$ and $b \mapsto a$ and analogously $(a_1\ a_2\ \cdots \ a_m)$ for longer cycles. For groups $H$ and $G$ we write $H \leq G$ if $H$ is a subgroup of $G$. The index of a subgroup will be denoted as $\gindex{G}{H}$. We will generally use the superscript notation for maps, i.e.\@ for $\psi: A \to B$ we write $a^\psi$ instead of $\psi(a)$ for $a \in A$. For a set $B \subseteq \Sym(\Omega)$ we write $\generate{B}$ for the subgroup of $\Sym(\Omega)$ generated by the elements in $B$. For a map $\psi: A \to B$ and a subset $A' \subseteq A$ we write $\restrict{\psi}{A'}$ for the restriction of $\psi$ to the map $A' \to B$, $a' \mapsto \psi(a')$.

\begin{defn}
A \emph{permutation group} is a subgroup $G \leq \Sym(\Omega)$ of the symmetric group. $\Omega$ is said to be the \emph{domain} of $G$.
\end{defn}

The two largest permutation groups are $\Sym(\Omega)$ and $\Alt(\Omega)$. Babai calls them the \emph{giants}. In disguise they appear in \cref{sec:localcert} when we discuss the core group-theoretic algorithm.

Central to the entire thesis are strings since we will reduce the Graph Isomorphism Problem shortly to the problem of determining string isomorphisms.

\begin{defn}[Strings]
A string $\str x$ is a map $\Omega \to \Sigma$ from a finite set $\Omega$ of \emph{positions} to a finite set $\Sigma$ of \emph{letters}, the \emph{alphabet}. Let $\Sigma^\Omega$ denote the set of all strings.
\end{defn} 

In examples we will write strings simply as chains of characters, e.g.\@ $\str x = \mathsf{raspberry}$ for $\Omega = \{1,\dots, 9\}$ and the lower case English alphabet $\Sigma = \{\mathsf a, \dots, \mathsf z\}$.

The action of $\Sym(\Omega)$ on $\Omega$ induces an action on $\Sigma^\Omega$. We define $\str x^\sigma(i) \coloneqq \str x\left(i^{\sigma^{-1}}\right)$ for all $i \in \Omega$, $\str x \in \Sigma^\Omega$, $\sigma \in \Sym(\Omega)$. This twist is necessary in order to have for all $i \in \Omega$ the convenient relation
\begin{equation} \label{eq:perm-twist}
	\str x^{\sigma \tau}(i) = \str x\left( i^{(\sigma \tau)^{-1}} \right) = \str x \left( \left( i^{\sigma^{-1}} \right)^{\tau^{-1}} \right) = \str x^{\sigma}\left(i^{\tau^{-1}} \right) = \left( \str x^\sigma \right)^\tau (i).
\end{equation}

Moreover, the action of $\Sym(\Omega)$ on $\Omega$ induces an action on $\binom{\Omega}{k} = \{A \subseteq \Omega \mid \abs{A} = k\}$ for $k \geq 1$, the set of $k$-sized subsets of $\Omega$. Without lost of generality we can assume that $k \leq \abs{\Omega}/2$. 

\begin{defn}[Johnson groups, {\cite[§1.2.1]{babai}}]
	If the groups $\Alt(\Omega)$ or $\Sym(\Omega)$ act on $\binom{\Omega}{k}$ with their induced actions, they are called \emph{Johnson groups}. We will write $\Alt^{(k)}(\Omega)$ or $\Sym^{(k)}(\Omega)$, respectively. For $\Omega = \{1, \dots, m\}$, we write $\Sym_m^{(k)}$ and $\Alt_m^{(k)}$, respectively. 
\end{defn}

Johnson groups are the barrier that prevented the Graph Isomorphism Problem to be decided in polynomial time for over thirty years. They are the automorphism groups of Johnson schemes, cf.\@ \cref{lemma:johnsonaut}.

\begin{notation}[Stabilizers]
A subset $\Delta \subseteq \Omega$ is said to be invariant under $G$ if $\Delta^g = \Delta$ for all $g \in G$. We denote the \emph{setwise stabilizer} of $\Delta$, i.e.\@ the set of all $g \in G$ such that $\Delta^g = \Delta$, as $G_\Delta$. The \emph{pointwise stabilizer} will be denoted as $G_{(\Delta)}$. It is the set of all $g \in G$ satisfying $\delta^g = \delta$ for all $\delta \in \Delta$.
\end{notation}

For a permutation group $G \leq \Sym(\Omega)$, a subset $\Delta \subseteq \Omega$ and a group $H \leq G_{\Delta}$ we will write $\restrict{H}{\Delta}$ for the group of permutations restricted to the domain $\Delta$ and taken from $H$. Hence, $\restrict{H}{\Delta} \leq \Sym(\Delta)$.

\begin{defn}[Transitivity, Primitivity] \label{def:orbits}
Let $G \leq \Sym(\Omega)$ act on $\Omega$. The \emph{orbit} of an $\omega \in \Omega$ is the set $\omega^G$. The orbits partition the domain $\Omega$. If $G$ acting on $\Omega$ admits only one orbit, namely $\Omega$, the action is said to be \emph{transitive} and $G$ is called \emph{transitive} whenever the action is evident from the context.
The action of $G$ is said to be \emph{$k$-transitive} for $1 \leq k \leq \abs \Omega$ if the induced action on the set of distinct $k$-tuples drawn from $\Omega$ is transitive.

Suppose that $G$ is transitive. A non-empty subset $\Delta \subseteq \Omega$ is called a \emph{block} of $G$ if for each $g \in G$ either $\Delta^g = \Delta$ or $\Delta^g \cap \Delta = \emptyset$. For a block $\Delta$ the set $\left\{ \Delta^g\ \middle|\ g \in G \right\}$ is called a \emph{system of blocks}. Such a system partitions $\Omega$. A block is said to be \emph{non-trivial} if it is neither a singleton nor the entire domain. If $G$ acting transitively on $\Omega$ does not admit non-trivial blocks, the action is said to be \emph{primitive} and $G$ is called \emph{primitive}.

A transitive group action induces an action on any system of blocks $\mathcal{B}$. This can be rephrased by inferring a homomorphism $\psi: G \to \Sym(\mathcal{B})$. A system of blocks $\mathcal{B}$ is said to be \emph{minimal} if the induced action on it is primitive, i.e.\@ $G^\psi$ acts primitively on $\mathcal{B}$. The \emph{stabilizer} of a system of blocks $\mathcal B = \{\Delta_1, \dots, \Delta_m\}$ is the subgroup $N \leq G$ of permutations $g \in G$ with the property $\Delta_i^g = \Delta_i$ for all $1 \leq i \leq m$. We have, $N = \ker \psi$.
\end{defn}
\begin{example} \label{ex:alttrans}
Let $n \in \mathbb{N}$. The symmetric group $\Sym_n$ is $n$-transitive and hence $k$-transitive for all $1 \leq k \leq n$. The alternating group $\Alt_n$ is $(n-2)$-transitive: For given tuples $x \coloneqq  (x_1, \dots, x_{n-2})$ and $y \coloneqq (y_1,\dots, y_{n-2})$ we can construct an arbitrary permutation $\sigma \in \Sym_n$ mapping $x$ to $y$. Let $x', x''$ denote the two values not contained in $x$ and $y', y''$ the two values not contained in $y$. Then either $\sigma(x') = y'$ and $\sigma(x'') = y''$ or $\sigma(x') = y''$ and $\sigma(x'') = y'$ by bijectivity. The two possibilities differ by a single transposition, namely $(y'\ y'')$. Hence, either $\sigma$ or $\sigma (y'\ y'')$ is even and maps $x$ to $y$. We have constructed an element from $\Alt_n$ mapping $x$ to $y$.
\end{example}
\begin{example} \label{ex:dihedral1}
An example for a non-primitive group is the symmetry group of the square known as the dihedral group $D_8 \leq \Sym_4$. We number the vertices of the square clockwise as $1,2,3,4$. The elements of $G$ send $\Delta \coloneqq \{1,3\}$ either to itself or to its complement. Thus, $\mathcal{B} \coloneqq \left\{ \{1,3\}, \{2,4\} \right\}$ is a system of blocks. We have a map $D_8 \to \Sym(\mathcal{B})$ with the property that its kernel is generated by the reflections across the square's diagonals. The induced action is primitive, hence $\mathcal{B}$ is minimal.
\end{example}
The term minimal must be used carefully. It is the number of blocks that is minimal whereas their size is maximal in the sense that the action on any courser system of blocks would admit non-trivial blocks. We make a straightforward observation:

\begin{lemma} Let $A, B \subseteq \Omega$ be two non-disjoint blocks under the action of $G \leq \Sym(\Omega)$. Then $A \cap B$ is a block under the same action. \label{lemma:blockcap}
\end{lemma}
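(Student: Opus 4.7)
The plan is to verify the block condition directly from the definition. I would fix an arbitrary $g \in G$ and aim to show that either $(A \cap B)^g = A \cap B$ or $(A \cap B)^g \cap (A \cap B) = \emptyset$. The hypothesis that $A$ and $B$ are non-disjoint will be used to certify the remaining requirement that $A \cap B$ is non-empty, so that $A \cap B$ qualifies as a block in the sense of \cref{def:orbits}.

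The first ingredient is the observation that the action commutes with intersections: because every element of $G$ acts on $\Omega$ as a bijection, we have $(A \cap B)^g = A^g \cap B^g$ for every $g \in G$. I would record this as a one-line preliminary.

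With that in hand, I would case-split. If $(A \cap B)^g \cap (A \cap B) = \emptyset$, there is nothing to prove. Otherwise, pick any $x \in (A \cap B)^g \cap (A \cap B) = A^g \cap B^g \cap A \cap B$. Then $x$ witnesses $A^g \cap A \neq \emptyset$ and $B^g \cap B \neq \emptyset$. Since $A$ and $B$ are both blocks, the block dichotomy forces $A^g = A$ and $B^g = B$. Combining these gives $(A \cap B)^g = A^g \cap B^g = A \cap B$, which is the remaining alternative.

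There is no real obstacle here; the statement is essentially a bookkeeping exercise once one notes that actions by bijections preserve intersections and then chases an element through the resulting identities. The only point worth highlighting explicitly is that the non-disjointness hypothesis is precisely what guarantees $A \cap B$ is a non-empty subset, which, together with the alternative just established for each $g \in G$, matches the block definition of \cref{def:orbits}.
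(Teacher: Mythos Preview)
Your proof is correct and essentially the same as the paper's: both use that bijections commute with intersections, $(A\cap B)^g = A^g \cap B^g$, and then appeal to the block dichotomy for $A$ and $B$. The only cosmetic difference is that the paper case-splits on whether both $A^g=A$ and $B^g=B$ hold (and otherwise takes WLOG $A^g\cap A=\emptyset$), whereas you case-split directly on whether $(A\cap B)^g\cap(A\cap B)$ is empty; the logic is identical.
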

\begin{proof}
We distinguish two cases. First assume that $g \in G$ satisfies $A^g = A$ and $B^g = B$. Then clearly, $(A\cap B)^g = A^g \cap B^g = A \cap B$ since $g$ is bijective. Contrarily, let without loss of generality $A^g \cap A = \emptyset$. Then, $(A\cap B)^g \cap (A \cap B) = A^g \cap B^g \cap A \cap B = \emptyset$. Hence, $A\cap B \neq \emptyset$ is a block.
\end{proof}

\begin{defn} \label{def:permiso}
Two permutation groups $G \leq \Sym(\Omega)$ and $G' \leq \Sym(\Omega')$ are said to be \emph{permutation isomorphic} if their exists a group isomorphism $\varphi: G \to G'$ and a bijection $\iota: \Omega \to \Omega'$ satisfying for all $\omega \in \Omega$ and $g \in G$,
\[
	\iota\left(\omega^g \right) = \iota(\omega)^{\varphi(g)}.
\]
\end{defn}
Throughout the algorithm we will maintain two maps with similar properties. They originate in \cref{proc:iota}. We establish one final convention:

\begin{defn}
From a computational perspective, groups are given to us as sets of generators. We refer to such a set as a \emph{description} of the group. \emph{Describing} a group means to compute a description. A homomorphism $\varphi$ between two groups is stored a collection of tuples $(g, g^\varphi)$ where $g$ runs through a set of generators. Such a representation is said to be a \emph{description} of the homomorphism.
\end{defn}

The complexity estimates for many fundamental procedures, e.g.\@ in \cref{sec:orbitsblocks,sec:schreier-sims}, will depend on the size of the set of generators. Potentially, the set of generators could be as large as the permutation group itself. However, by \cite{babaiGenerators}, the size of any minimal set of generators for a subgroup of $\Sym_n$ is bounded by $2n$. We can hence assume that the sizes of the generating sets that we deal with are polynomially bounded in $n$, cf.\@ \cref{sec:measures-complex}.

\section{Graph and String Isomorphism Problem}

In \cref{sec:intro} we introduced a first basal version of the Graph Isomorphism Problem: Given two directed graphs $X_1 = (V, E_1)$ and $X_2 = (V, E_2)$ on a shared finite vertex set $V$ with edge sets $E_1, E_2 \subseteq V \times V$, we are asked to decide whether there exists an adjacency-preserving bijection $\pi: V \to V$, i.e.\@ a map satisfying $(v,w) \in E_1 \iff (\pi(v), \pi(w)) \in E_2$ for all $v,w \in V$. The result consists either of such a map $\pi$ or the verification that no such map exists. 

However, we will ask a more general question. Given the same input, we want to compute the set of all adjacency-preserving bijections $\pi$. If we can compute this set in quasi-polynomial time, cf.\@ \cref{eq:intro3}, we can clearly also answer the question from above in quasi-polynomial time.

Numerous data formats are used for encoding graphs. For instance, they can be stored as linked objects, edge lists or vertex lists. A widely used data format are adjacency matrices. The adjacency matrix of $G = (V,E)$ is a string $\str x: V \times V \to \{0,1\}$ with $\str x(v,w) = 1 \iff (v,w) \in E$. Thinking of graphs as such strings leads to a further reduction of the problem. Instead of asking for graph isomorphisms, one may ask for string isomorphism. 

\begin{defn} \label{def:stringiso}
Given two strings $\str x, \str y: \Omega \to \Sigma$ and a group $G \leq \Sym(\Omega)$, the \emph{String Isomorphism Problem} is the task of determining
\[
	\Iso_G(\str x, \str y) = \left\{\pi \in G \ \middle|\ \str x^\pi = \str y\right\},
\]
the set of \emph{string isomorphism} between $\str x$ and $\str y$ taken from $G$. We write $\Aut_G(\str x) = \Iso_G(\str x, \str x)$. If $\Iso_G(\str x, \str y) \neq \emptyset$, we can write
\[
	\Iso_G(\str x, \str y) = \Aut_G(\str x) \sigma
\]
for a $\sigma \in \Iso_G(\str x, \str y)$. We present the solution of the String Isomorphism Problem as a description of the group $\Aut_G(\str x)$ and a suitable $\sigma$.
\end{defn}

Imposing the condition that the bijections have to be taken from a certain group is very natural. For example, the problem of computing the adjacency-preserving bijections between the digraphs $X_1$ and $X_2$ reduces in polynomial time to the problem of finding $\Iso_G(\str x_1, \str x_2)$ where $\str x_i$ is the adjacency matrix of $X_i$ for $i=1,2$ and $G$ is the image of $\Sym(V)$ under the canonical homomorphism $\Sym(V) \to \Sym(V \times V)$. Intuitively, the group encodes which positions in the strings belong together as they represent edges sharing a common vertex. We have reduced the Graph Isomorphism Problem to the String Isomorphism Problem in polynomial time. If we can solve the latter in quasi-polynomial time, we can solve the former in quasi-polynomial time. The fact that a graph of $n$ vertices transforms into a string of length $n^2$ does only affect the implicit constants. Overtly, the String Isomorphism Problem is far more general than the initial Graph Isomorphism Problem as it deals with arbitrary alphabets with possibly more than two letters and arbitrary permutation groups.

For undirected graphs the reduction is similar. A graph $G = (V,E)$ with $E \subseteq \binom{V}{2}$ is encoded as an adjacency matrix $\str x: \binom{V}{2} \to \{0,1\}$ as above.

Now we are ready to state Babai's grand result whose justification will occupy us for the rest of this thesis.

\begin{theorem}[Babai, {\cite{babai}}]
	The String Isomorphism Problem can be solved in quasi-polynomial time in the length of the strings.
\end{theorem}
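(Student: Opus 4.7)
The plan is to reduce the String Isomorphism Problem to a recursive procedure whose recursion tree has quasi-polynomially many leaves, each processed in polynomial time. Following Luks, I would first split $\Omega$ into $G$-orbits and treat each orbit separately, reducing to the transitive case. For a transitive group $G \leq \Sym(\Omega)$, I would compute a minimal system of blocks $\mathcal{B}$ and the associated homomorphism $\psi: G \to \Sym(\mathcal{B})$, so that $G^\psi$ acts primitively on $\mathcal{B}$. Writing $N = \ker \psi$, the kernel $N$ stabilizes each block individually, so $\Iso_G(\str x, \str y)$ can be reconstructed from the isomorphisms of the strings restricted to each block, chained together by coset representatives of $N$ in $G$. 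This gives the classical Luks recursion: one recurses on each block and combines the results via $G^\psi$.

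The recursion is cheap precisely when the primitive quotient $G^\psi$ is small. Invoking the Cameron–Babai classification of primitive groups, either $\gindex{\Sym(\mathcal{B})}{G^\psi}$ is quasi-polynomially bounded (in which case Luks' scheme terminates with good parameters), or $G^\psi$ is essentially a giant $\Alt(\Gamma)$, $\Sym(\Gamma)$, or a Johnson group $\Alt^{(k)}(\Gamma)$, $\Sym^{(k)}(\Gamma)$ on an underlying set $\Gamma$. The first subcase I would handle by iterating Luks' reduction over a coset system of $G^\psi$ inside the small overgroup. The Johnson subcase reduces, via the explicit description of $\binom{\Gamma}{k}$-actions, to the giant subcase on $\Gamma$. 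So the heart of the matter, and what I expect to be the main obstacle, is \emph{Luks' barrier}: the case in which $G^\psi$ contains $\Alt(\mathcal{B})$.

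For Luks' barrier I would follow Babai's strategy of \emph{local certificates}, the main subject of \cref{sec:localcert}. For each small test set $T \subseteq \mathcal{B}$ of size $\Theta(\log n)$, one computes the ``certificate'' subgroup of $\Aut_G(\str x)$ whose $\psi$-image pointwise-stabilizes $\Gamma \setminus T$ and acts on $T$. By a group-theoretic window of Babai's, this computation can be made quasi-polynomial because either the certificate reaches $\Alt(T)$ (a ``full'' certificate) or it is contained in a group of index quasi-polynomial in $\abs{T}$, giving a non-trivial restriction on $\Aut_G(\str x)$. The second subroutine then \emph{aggregates} these local certificates. The set of blocks on which every local certificate is full forms a canonically defined subset $W \subseteq \mathcal{B}$ whose complement carries extra structure; outside $W$ one obtains a canonical colored partition or an equitable partition / Johnson scheme (via \cref{lemma:johnsonaut}), and the String Isomorphism Problem restricted to this structure reduces either to a strictly smaller giant action or to a Johnson-group instance, both of which feed back into Luks' recursion.

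The complexity analysis, carried out in \cref{sec:complexity}, is then an amortized recursion: each step either shrinks $\abs{\Omega}$ multiplicatively, or reduces the degree of the ambient giant, or decreases the index of the acting group by a quasi-polynomial factor. Balancing the choice of the size of the test set $T$ (logarithmic in $n$) against the branching factor in the aggregation step is what yields the bound $T(n) \leq \exp\bigl((\log n)^C\bigr)$ with $C=3$, as claimed by Helfgott. The most delicate piece will be showing that the aggregation step does not explode the recursion, which ultimately rests on the canonicity of the certificates and on bounds on the orders of primitive groups that are not giants.
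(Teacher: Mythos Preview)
Your outline tracks the paper's architecture closely, but there is one substantial omission that leaves the argument incomplete. After aggregating the local certificates, in the cases where the support $S$ is small or where the induced action on the large orbit $\Phi$ fails to be the full alternating group (cases~2b and~3 of \cref{sec:agg-cert}), what the aggregation actually produces is a \emph{$k$-ary relational structure with large symmetry defect} on $\Gamma$ --- not directly a colored partition or a Johnson scheme. Converting such a relational structure into one of those two usable forms is precisely the job of the \emph{Extended Design Lemma} (\cref{proc:design}), which packages Weisfeiler--Leman refinement together with Babai's Split-or-Johnson routine; the thesis itself remarks that this component alone occupies more than twenty pages of \cite{babai} and was the locus of the 2017 error. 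Your sentence ``outside $W$ one obtains a canonical colored partition or an equitable partition / Johnson scheme (via \cref{lemma:johnsonaut})'' skips over exactly this machinery: \cref{lemma:johnsonaut} only identifies the automorphism group of a Johnson scheme once you already have one in hand; it does not manufacture the scheme from a relational structure.

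Two smaller inaccuracies are worth flagging. First, Cameron--Mar\'oti (\cref{thm:cameron}) bounds the \emph{order} of the primitive quotient, $\abs{G^\psi} < n^{1+\log_2 n}$, not its index in $\Sym(\mathcal B)$; this is what makes the strong Luks reduction affordable in the non-Cameron branch. Second, the local-certificate procedure does not compute a subgroup whose image pointwise-stabilizes $\Gamma\setminus T$; it works inside the \emph{setwise} stabilizer $G_T$ and iteratively enlarges a window $W\subseteq\Omega$ of affected elements, with the Unaffected Stabilizer Theorem and the Affected Orbit Lemma (\cref{thm:afforb}) providing both the termination criterion and the guarantee that the recursive subproblems on $N$-orbits are short enough. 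These are the ingredients that make the certificate computation recursively affordable, and any sketch at this level should name them explicitly rather than gesture at ``a group-theoretic window of Babai's''.
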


\section{Canonicity}
\label{sec:canonicity}
The notion of canonicity will be important throughout the entirety of the algorithm. In particular, we will be interested in computing canonical structures based on the two input strings $\str x, \str y: \Omega \to \Sigma$. Suppose that a group $G \leq \Sym(\Omega)$ acts on $\Omega$ and hence on the set of strings $\Sigma^\Omega$. Let $\mathcal X$ denote a set of structures, e.g.\@ a set of partitions of the domain $\Omega$. Suppose additionally that $G$ acts on $\mathcal X$, for example by permuting the sets of the partition. An assignment $C: \Sigma^\Omega \to \mathcal X$ is said to be \emph{canonical} with respect to $G$ if the action of $G$ commutes with $C$.

A basal example for canonical assignments are colorings according to letter multiplicities. For a string $\str x \in \Sigma^\Omega$ we color $\omega \in \Omega$ according to the multiplicity of $\str x(\omega)$ in $\str x$. For instance, $\str x = \mathsf{banana}$ would be mapped to the sequence $(1,3,2,3,2,3)$. Clearly, $G \leq \Sym(\Omega)$ acts on the set of such colorings by permuting the positions. The assignment commutes with the group actions.

A non-canonical assignment would be a coloring of $\omega \in \Omega$ according to whether $\str x(\omega)$ is surrounded by vowels. The example string $\str x$ would be mapped to $(0,1,0,1,0,0)$ where $1$ encodes yes. Applying the transposition $\tau = (1\ 2)$ results in the non-edible $\str x^{\tau} = \mathsf{abnana}$ which is mapped to $(0,0,0,0,1,0)$. Thus, this assignment does not satisfy $C(\str x^\tau) = \left(C(\str x)\right)^\tau$. $C$ is not canonical with respect to any supergroup of $\generate{(1\ 2)}$.

Canonical structures are an important tool for refuting isomorphicity. This happens based on the fact that the set $\Iso_G(\str x, \str y)$ can only be non-empty if $\Iso_G\left(C(\str x), C(\str y)\right)$ is non-empty. For example, two strings can only be isomorphic if there exists a permutation respecting their letter multiplicities. In any case, $\Iso_G(\str x, \str y) \subseteq \Iso_G\left(C(\str x), C(\str y)\right)$. This allows us to limit the set of possible isomorphisms. The challenge will be to compute canonical structures sufficiently fast.

\section{Measures of Complexity} \label{sec:measures-complex}
We will use Landau's and Vinogradov's notation for denoting the asymptotic growth behavior of functions under the tacit assumption that we take limits towards $+\infty$. For example, $f(n) = O(g(n))$ and $f(n) \ll g(n)$ are by definition equivalent to $\limsup_{n\to\infty} \abs{f(n)/g(n)} < \infty$ for $f,g: \mathbb{R} \to \mathbb{R}$. We write $f(n) \asymp g(n)$ if $f(n) = O(g(n))$ and $g(n) = O(f(n))$.
For measuring complexity, we introduce the following classes of functions.

\begin{defn} We call a function $f: \mathbb{R} \to \mathbb{R}$
\begin{enumerate}
	\item poly-logarithmic if $f(n) \ll (\log n)^{O(1)}$,
	\item polynomial if $f(n) \ll n^{O(1)}$,
	\item quasi-polynomial if $f(n) \ll \exp\left( (\log n)^{O(1)} \right)$.
\end{enumerate}
\end{defn}

The natural logarithm will be denoted by $\log$. Furthermore, $\log_b$ will denote the logarithm with respect to base $b$.

In \cref{sec:complexity} we will analyze the complexity of the entire algorithm. Our aim is to obtain a quasi-polynomial bound on the overall execution time, cf.\@ \cref{eq:intro3}. Central to the algorithm is the divide-and-conquer paradigm of recursively breaking down a problem into several subproblems. It will turn out that the tree spanned by the branching subproblems is of poly-logarithmic depth. Until we make this rigorous, we should keep in mind that we can in general afford polynomial non-recursive operations. This means that we do not care about the exponents in polynomial bounds. Even quasi-polynomial subroutines are affordable, although they occur rarely.

\section{Combinatorial structures}
\label{sec:combi}
Despite that this thesis focuses on Babai's group theoretic tools, we will still require some combinatorial structures and methods. The presentation will be very limited. See \cite[§§2.3--2.6]{babai} and \cite[§§2.3,~2.4]{helfgott} for a full overview.

In each of the following definitions we have a finite set $\Gamma$ on which we want to define a structure, and a set of colors $\mathcal C$, as well finite. We generalize the notion of colored graphs. In a vertex-colored graph every vertex is assigned a color. This induces on the one hand a partition of the vertex set in color classes and on the other hand a map from the set of vertices to the set of colors. We will use the same principle but for coloring vectors of vertices instead of single nodes. Note that we do not ask two adjacent vertices two have different colors as it is often done when studying graph colorings.

The graphs that are colored in this section are neither of the input graphs -- which have long ago been encoded as strings anyways. The following notions are meant to be used for abstract correspondences between the two input strings that restrict the set of possible isomorphisms as outlined in \cref{sec:canonicity}.

\begin{defn}[Relational structure, partition structure] \label{def:relstruct} Let $\Gamma$ be a finite set, $k \geq 1$. A \emph{$k$-ary relational structure} on $\Gamma$ is a tuple $\mathfrak{X} = \left(\Gamma, (R_i)_{i \in \mathcal C} \right)$ where $R_i \subseteq \Gamma^k$ for each $i \in \mathcal C$. We call the $R_i$ \emph{relations} on $\Gamma$. The \emph{rank} of $\mathfrak{X}$ is $\abs{\mathcal C}$.
If furthermore all $R_i$ are non-empty and partition $\Gamma^k$, we call $\mathfrak{X}$ a \emph{$k$-ary partition structure}. In this case, we describe $\mathfrak{X}$ by a function $c: \Gamma^k \to \mathcal{C}, x \mapsto i$ such that $x \in R_i$ and write $\mathfrak{X} = (\Gamma, c)$. $c(x)$ is said to be the \emph{color} of $x \in \Gamma^k$. 

For two $k$-ary relational structures $\mathfrak{X} = \left(\Gamma, (R_i)_{i \in \mathcal C} \right)$ and $\mathfrak{X}' = \left(\Gamma', (R'_i)_{i \in \mathcal C} \right)$ a bijection $f: \Gamma \to \Gamma'$ is said to be an \emph{isomorphism} between $\mathfrak{X}$ and $\mathfrak{X}'$ if for all $i \in \mathcal{C}$ and $(x_1,\dots,x_k) \in \Gamma^k$, $(x_1,\dots,x_k) \in R_i \iff (x_1^f,\dots,x_k^f) \in R'_i$. We write $\Iso(\mathfrak{X},\mathfrak{X}')$ for the set of isomorphisms and $\Aut(\mathfrak{X}) = \Iso(\mathfrak{X},\mathfrak{X}) \leq \Sym(\Gamma)$ for the set of automorphisms.
\end{defn}

We will impose further regularity conditions on the structures. Note that $\Sym_k$ acts naturally on $\Gamma^k$ by permuting the coordinates.

\begin{defn}[Configuration] A $k$-ary partition structure $\mathfrak{X} = (\Gamma, c) = \left(\Gamma, (R_i)_{i \in \mathcal C} \right)$ is called \emph{$k$-ary configuration} if the following conditions are satisfied: \nopagebreak \label{def:config}
	\begin{enumerate}
		\item If $c(x_1,\dots,x_k) = c(x'_1,\dots,x'_k)$, then $\left( x_i = x_j \iff x'_i = x'_j \right)$ for all $1 \leq i,j \leq k$. \label{item:config1}
		\item For all $\pi \in \Sym_k$ and all $i \in \mathcal{C}$ there exists $j \in \mathcal{C}$ such that $R_i^\pi = R_j$. \label{item:config2}
	\end{enumerate}
\end{defn}
The case $k=2$ offers some intuition for these conditions: We can think of a $2$-ary (or binary) configuration structure as an edge-colored complete digraph with vertices $\Gamma$. \Cref{item:config1} implies that when two nodes $y \neq z$, then $c(x,x) \neq c(y,z)$. Loops are colored differently than proper links. Since $\Sym_2 = \left\{(1),(1\ 2)\right\}$, \cref{item:config2} simplifies: We have for each color $i \in \mathcal{C}$ a color $j \in \mathcal{C}$ such that $R_i = R_j^- \coloneqq \{(y,x) \mid (x,y) \in R_j\}$. This means that $c(x,y)$ determines $c(y,x)$. If an out-edge is colored with the first color, then the reverse in-edge is colored with the second. We call the digraph $X_i = (\Gamma, R_i)$ the \emph{color-$i$ constituent graph} for $\mathfrak{X}$.

\begin{defn}[Coherent Configuration] A $k$-ary configuration $\mathfrak{X} = (\Gamma, c)$ is called \emph{$k$-ary coherent configuration} if the following additional condition holds: \label{def:coherent-config}
	\begin{enumerate} \setcounter{enumi}{2} \nopagebreak
		\item There exists a function $\gamma: \mathcal{C}^k \times \mathcal{C} \to \mathbb{Z}_{\geq 0}$ such that for all $v \in \mathcal{C}^k$ and $j \in \mathcal{C}$ and for any $x \in \Gamma^k$ such that $c(x) = j$, \label{item:config3} 
		\[
			\gamma(v,j) = \abs{\{ z \in \Gamma \mid c(x^i(z)) = v_i\ \forall 1 \leq i \leq k\}},		
		\]
		where $x^i(z) = (x_1,\dots, x_{i-1},z,x_{i+1},\dots,x_k)$. 
	\end{enumerate}
	A coherent configuration is said to be \emph{classical} if $k=2$. If a classical coherent configuration has only two colors, one for the \emph{diagonal} $\diag \Gamma = \{(x,x) \mid x \in \Gamma\}$ and one for its complement, then it is said to be \emph{trivial} or a \emph{clique}.
\end{defn}

We will again look at the case $k=2$ in order to provide some intuition. See \cref{fig:orbital-config} for an example. The existence of $\gamma$ guarantees that for each choice of three colors $i,j,l \in \mathcal{C}$ the number $\abs{\{z \in \Gamma \mid c(x,z) = j \land c(z,y) = l\}}$ is independent of $x,y \in \Gamma$ whenever $c(x,y) = i$. This means that for all pairs of nodes that are linked with a red edge, say, there is the same number of $2$-paths connecting them and containing first a blue and secondly a yellow edge. If $k > 2$, this generalizes to the number of colored walks following a given sequence of colors. 

An important example for classical coherent configurations occurs in the context of group actions:
\begin{lemma}[Orbital configuration] Let $G \leq \Sym(\Gamma)$ for a finite set $\Gamma$. Let $\{R_1, \dots, R_n\}$ denote the set of orbits of the induced action on $\Gamma \times \Gamma$, i.e.\@ the \emph{orbitals}. Then $\mathfrak{X} = (\Gamma; R_1, \dots, R_n)$ defines a classical coherent configuration, the \emph{orbital configuration}. \label{lemma:orbital-config}
\end{lemma}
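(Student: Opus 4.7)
The plan is to verify, in turn, the three defining properties of a classical coherent configuration: the partition property, the two configuration axioms in \cref{def:config}, and the coherence axiom in \cref{def:coherent-config}. The recurring theme I would exploit is that every $g \in G$ permutes the orbitals among themselves by construction, so pushing forward via a suitably chosen $g$ lets me compare structures at different representatives of the same orbital.

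Since orbits of a group action always partition the underlying set, $\{R_1,\dots,R_n\}$ partitions $\Gamma^2$ into non-empty blocks, so $\mathfrak{X}$ qualifies as a $2$-ary partition structure with $c(x,y) = i$ when $(x,y)\in R_i$. For \cref{item:config1}, if $c(x_1,x_2) = c(x'_1,x'_2)$ then some $g \in G$ satisfies $(x_1^g,x_2^g) = (x'_1,x'_2)$, and bijectivity of $g$ immediately gives $x_1 = x_2$ if and only if $x'_1 = x'_2$. For \cref{item:config2}, only the transposition $(1\,2)$ requires attention; given two elements of $R_i^{(1\,2)}$, their coordinate swaps lie in the single orbital $R_i$, so any $g \in G$ relating the swaps also relates the originals, showing $R_i^{(1\,2)}$ is itself an orbital.

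The substantive step is coherence. Fixing $v = (v_1,v_2) \in \mathcal{C}^2$ and $j \in \mathcal{C}$, for any $(x_1,x_2) \in R_j$ I would introduce
\[
S(x_1,x_2;v) := \bigl\{ z \in \Gamma : c(z,x_2) = v_1 \text{ and } c(x_1,z) = v_2 \bigr\},
\]
and prove that its cardinality depends only on $j$ and $v$. Given a second representative $(x'_1,x'_2) \in R_j$, pick $g \in G$ with $(x_1^g,x_2^g) = (x'_1,x'_2)$. Because $G$ preserves each orbital, $c(z,x_2) = c(z^g,x'_2)$ and $c(x_1,z) = c(x'_1,z^g)$ for every $z \in \Gamma$, so the bijection $z \mapsto z^g$ restricts to a bijection $S(x_1,x_2;v) \to S(x'_1,x'_2;v)$. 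Declaring $\gamma(v,j)$ to be this common cardinality yields the required function. The argument is more bookkeeping than insight, and I do not expect any real obstacle: the only place to slip up is keeping straight the left-versus-right coordinate conventions in \cref{def:coherent-config}, but the group-action structure is essentially tailor-made to make coherence hold automatically.
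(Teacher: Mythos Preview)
Your proof is correct and follows essentially the same route as the paper: verify the partition-structure property, then the two configuration axioms via bijectivity of a group element sending one orbital representative to another, and finally coherence by exhibiting the bijection $z\mapsto z^g$ between the two counting sets using $G$-invariance of the orbitals. The only differences are cosmetic (the paper phrases \cref{item:config2} by picking a basepoint $(x_0,y_0)$ and noting that $R^-$ is the orbital of $(y_0,x_0)$), so there is nothing to add.
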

\begin{proof}
Clearly, $\mathfrak{X}$ is a 2-ary partition structure. For \cref{item:config1} take an arbitrary orbital $R$ and let $(x_1,x_2), (x'_1,x'_2)\in R$. Thus $(x_1,x_2)^g = (x'_1,x'_2)$ for some $g \in G$. Clearly, $x_1 = x_2 \iff x_1^g = x_2^g \iff x'_1 = x'_2$. For \cref{item:config2} it is to show that $R^- = \{(y,x) \mid (x,y) \in R\}$ is an orbital. We have that $R = \{(x_0,y_0)^g \mid g \in G\}$ for some $(x_0, y_0) \in \Gamma^2$. Trivially, $R^-$ is the orbital containing $(y_0,x_0)$. The function $\gamma$ in \cref{item:config3} must be independent of $x$. We therefore choose three colors, here orbitals, $1 \leq v_1, v_2, j \leq n$ and $(x_1,x_2), (y_1,y_2) \in R_j$ arbitrarily. Let $g \in G$ such that $(x_1,x_2) = (y_1,y_2)^g$. Then,
\begin{align*}
\abs{\left\{z \in \Gamma\ \middle|\ (z,x_2) \in R_{v_1}, (x_1,z) \in R_{v_2}\right\}} 
& = \abs{\left\{z \in \Gamma\ \middle|\ (z,y_2^g) \in R_{v_1}, (y_1^g,z) \in R_{v_2}\right\}} \\
& = \abs{\left\{z \in \Gamma\ \middle|\ (z,y_2) \in R_{v_1}^g, (y_1,z) \in R_{v_2}^g\right\}} \\
& = \abs{\left\{z \in \Gamma\ \middle|\ (z,y_2) \in R_{v_1}, (y_1,z) \in R_{v_2}\right\}} 
\end{align*}
where we replaced $z$ by its image under the bijection $g^{-1}$ and used that the $R_{v_i}$ are as orbitals $G$-invariant. Concluding that $\gamma$ in \cref{item:config3} is well-defined completes the proof.
\end{proof}

\begin{example}
We compute the orbital configuration for the dihedral group $D_8$, that is the symmetry group of the square. We have $D_8 = \generate{(1\ 2\ 3\ 4), (2\ 4)}$. Clearly, the diagonal is one orbital. Applying the generators shows that the orbital configuration for $D_8$ acting on the set of four elements is of rank 3. We can think of the orbital configuration as a colored complete digraph. For clarity, we draw the edges of the three colors in three separate graphs, cf.\@ \cref{fig:orbital-config}.

Reverting to \cref{item:config3}, we note that for example $\gamma(\text{turquoise}, \text{ultramarine}, \text{violet}) = 0$ because there are no turquoise-ultramarine paths from the source to the sink of a violet edge. This number is independent of the chosen violet edge.
\end{example}
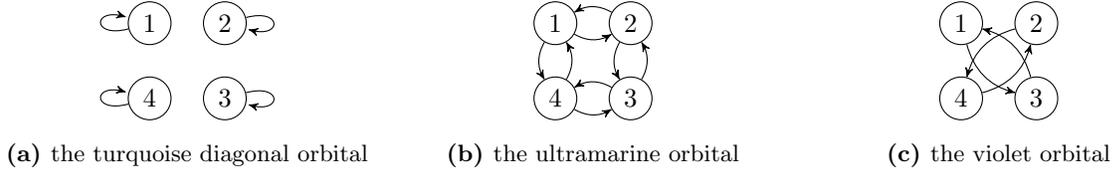
\begin{figure}
\centering
\begin{subfigure}{.3\textwidth}
\centering
\begin{tikzpicture}[node distance=1cm, auto, ->,>=stealth']
\tikzstyle{node} = [draw, circle, inner sep=.1cm]

\node [node] (a1) {$1$};
\node [node, right of=a1] (a2) {$2$};
\node [node, below of=a2] (a3) {$3$};
\node [node, below of=a1] (a4) {$4$};

\path (a1) edge [->, draw, loop left] (a1);
\path (a4) edge [->, draw, loop left] (a4);
\path (a2) edge [->, draw, loop right] (a2);
\path (a3) edge [->, draw, loop right] (a3);
\end{tikzpicture}
\caption{the turquoise diagonal orbital}
\end{subfigure}
\begin{subfigure}{.3\textwidth}
\centering
\begin{tikzpicture}[node distance=1cm, auto, ->,>=stealth']
\tikzstyle{node} = [draw, circle, inner sep=.1cm]

\node [node] (a1) {$1$};
\node [node, right of=a1] (a2) {$2$};
\node [node, below of=a2] (a3) {$3$};
\node [node, below of=a1] (a4) {$4$};

\path (a1) edge [->, draw, bend right] (a2);
\path (a2) edge [->, draw, bend right] (a3);
\path (a3) edge [->, draw, bend right] (a4);
\path (a4) edge [->, draw, bend right] (a1);
\path (a2) edge [->, draw, bend right] (a1);
\path (a3) edge [->, draw, bend right] (a2);
\path (a4) edge [->, draw, bend right] (a3);
\path (a1) edge [->, draw, bend right] (a4);
\end{tikzpicture}
\caption{the ultramarine orbital}
\end{subfigure}
\begin{subfigure}{.3\textwidth}
\centering
\begin{tikzpicture}[node distance=1cm, auto, ->,>=stealth']
\tikzstyle{node} = [draw, circle, inner sep=.1cm]

\node [node] (a1) {$1$};
\node [node, right of=a1] (a2) {$2$};
\node [node, below of=a2] (a3) {$3$};
\node [node, below of=a1] (a4) {$4$};

\path (a1) edge [->, draw, bend right] (a3);
\path (a3) edge [->, draw, bend right] (a1);
\path (a2) edge [->, draw, bend right] (a4);
\path (a4) edge [->, draw, bend right] (a2);
\end{tikzpicture}
\caption{the violet orbital}
\end{subfigure}
\caption{Orbital configuration of $D_8$}
\label{fig:orbital-config}
\end{figure}

\begin{defn} A $k$-ary coherent configuration $\mathfrak{X} = (\Gamma, c)$ is said to be \emph{homogeneous} if the $(x,\dots,x) \in \Gamma^k$ have the same color for all $x \in \Gamma$. If $k=2$, $\mathfrak{X}$ is called \emph{primitive} if it is homogeneous and all \emph{constituent graphs} $X_i = \left(\Gamma, \{(x,y) \in \Gamma^2 \mid c(x,y) = i\} \right)$ except the diagonal $X_{c(x,x)}$ are connected. It is said to be \emph{uniprimitive} if it is primitive and non-trivial. \label{def:coherent-config-prop}
\end{defn}

As an example, the orbital configuration of a transitive group is homogeneous. \Cref{fig:orbital-config} shows a non-primitive coherent configuration. The violet constituent graph is not connected. When working with local certificates, we will need another elementary result. First we introduce some elementary graph theoretic notions. The concerning digraph $X=(V,E)$ induces a binary partition structure $\mathfrak{X} = (V; E, V \times V \setminus E)$. We can thus regard the automorphism group of $X$ as the automophism group of $\mathfrak{X}$. 

\begin{defn} Let $X = (V,E)$ be a digraph. Then $X$ is said to be \emph{trivial} if $\Aut(X) = \Sym(V)$. That is the case if and only if $E \in \{\emptyset, V \times V, \diag V, V \times V \setminus \diag V\}$ where $\diag V = \{(v,v) \mid v \in V\}$, the \emph{diagonal}. $X$ is said to be \emph{irreflexive} if $E \cap \diag V = \emptyset$. It is \emph{biregular} if there exists $d \in \mathbb{N}$ such that $d = \deg^+(v) = \deg^-(v)$ for all $v \in V$ where $\deg^+(v)$ and $\deg^-(v)$ denote the in- and out-degree of the vertex $v$ respectively.
\end{defn}

\begin{lemma}[Degree awareness, {\cite[Observation~2.5.3]{babai}}] \label{lemma:biregular} Let $\mathfrak{X} = (\Gamma, c)$ be a classical coherent homogeneous configuration. Then every constituent graph $X = (\Gamma, R)$ is biregular. 
\end{lemma}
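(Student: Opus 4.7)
The plan is to exploit the coherence axiom \cref{item:config3} evaluated at pairs of the form $(v,v)$, which by homogeneity all share a single color, and then to equate in- and out-degrees through a double-counting argument.

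First I would set up notation. Homogeneity says that there is a single color $d_0 \in \mathcal{C}$ with $c(v,v) = d_0$ for every $v \in \Gamma$; moreover, by \cref{item:config1} applied to $(v,v)$ and any $(x,y) \in R_{d_0}$, one concludes $x=y$, so $R_{d_0} = \diag \Gamma$. Fix a color $i \in \mathcal{C}$ and consider the constituent graph $X_i = (\Gamma, R_i)$. The out-degree and in-degree of $v$ in $X_i$ are
\[
    \deg^+_i(v) = \abs{\{z \in \Gamma \mid c(v,z) = i\}}, \qquad \deg^-_i(v) = \abs{\{z \in \Gamma \mid c(z,v) = i\}}.
\]

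Next I would show that both quantities are independent of $v$. Apply \cref{item:config3} to $x = (v,v)$, which has color $j = d_0$ by homogeneity. For any $v_1 \in \mathcal{C}$, the constant $\gamma((v_1, i), d_0)$ counts those $z$ with $c(z,v) = v_1$ and $c(v,z) = i$. Summing over $v_1 \in \mathcal{C}$ partitions the set counted by $\deg^+_i(v)$, so
\[
    \deg^+_i(v) = \sum_{v_1 \in \mathcal{C}} \gamma\bigl((v_1, i), d_0\bigr).
\]
Since the right-hand side does not depend on $v$, all out-degrees in $X_i$ are equal to a common value $d^+_i$. Symmetrically, taking the vector $(i, v_2)$ and summing over $v_2$ shows $\deg^-_i(v) = \sum_{v_2} \gamma((i, v_2), d_0)$, so all in-degrees equal a common value $d^-_i$.

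Finally, I would equate the two constants by double counting edges of $X_i$:
\[
    \abs{\Gamma} \cdot d^+_i = \sum_{v \in \Gamma} \deg^+_i(v) = \abs{R_i} = \sum_{v \in \Gamma} \deg^-_i(v) = \abs{\Gamma} \cdot d^-_i,
\]
whence $d^+_i = d^-_i =: d$, which is exactly the biregularity of $X_i$.

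There is no real obstacle here; the only thing to watch is the correct bookkeeping of indices in \cref{item:config3}, specifically that homogeneity is exactly what allows the color $j$ of $(v,v)$ to be the same constant $d_0$ for every $v$, thereby making the coherence count independent of the basepoint. One could alternatively invoke \cref{item:config2} to pass between $X_i$ and its reverse graph $X_{i^\ast}$, but this only yields $d^+_i = d^-_{i^\ast}$ and still requires an additional step; the double-counting route above is cleaner.
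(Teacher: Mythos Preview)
Your proof is correct and follows essentially the same route as the paper's: use homogeneity to give all diagonal pairs a common color, invoke the coherence axiom \cref{item:config3} at those pairs to show that out- and in-degrees are each constant over $\Gamma$, and finish by double counting edges to equate the two constants. The only minor difference is that the paper invokes \cref{item:config2} to pin down the reverse color and express $\deg^+(v)$ as a single $\gamma$-value rather than your sum over the free coordinate, but the structure is identical.
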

\begin{proof}
Let $v \in \Gamma$. Let amber be the color of the edges in $X$. Let, by \cref{item:config2}, burgundy be the color of the reverse edges. Since $\mathfrak{X}$ is homogeneous all loops have the same color. Call this color carmine. Then, by \cref{item:config3},
\begin{align*}
	\deg^+(v) & = \abs{\{z \in \Gamma \mid (v,z) \in R \}}
	= \abs{\{z \in \Gamma \mid c(v,z) = \text{amber}, c(z,v) = \text{burgundy} \}}  \\
	&= \gamma(\text{amber},\text{burgundy}, \text{carmine}).
\end{align*}
Thus $\deg^+(v)$ is in fact independent of $v$. The out-degree and analogously the in-degree of all vertices must be the same. From elementary graph theory we know that $\abs R = \sum_{v \in \Gamma} \deg^+(v) = \sum_{v \in \Gamma} \deg^-(v)$. Consequently, there exists $d \in \mathbb{N}$, such that $d = \deg^+(v) = \deg^-(v)$ for all $v \in \Gamma$. Hence, $X$ is biregular.
\end{proof}

In order to be able to state the Design Lemma, we introduce two more classes of coherent configurations following \cite[§2.5.5]{babai}. They are clearly homogeneous.
\begin{defn}
	Let $\mathfrak{X} = (\Gamma, c)$ be a classical coherent configuration. Then $\mathfrak{X}$ is said to be an \emph{association scheme} if $c(x,y) = c(y,x)$ for every $x,y \in \Gamma$.
\end{defn}  
\begin{defn} Let $t \geq 2$ and $k \geq 2t+1$. Let $\Omega$ be a set with $k$ elements and let $\Gamma = \binom{\Omega}{t}$. The \emph{Johnson scheme} $\mathfrak{J}(k,t) = (\Gamma; R_0, \dots, R_t)$ is an association scheme with the relations $(T_1, T_2) \in R_i \iff \abs{T_1 \setminus T_2} = i$ for $0 \leq i \leq t$. \label{def:johnson-scheme}
\end{defn}

The following result will be useful when we want to determine automorphisms which respect to Johnson schemes:

\begin{lemma}[Automorphisms of Johnson schemes, {\cite[Proposition~2.5.23]{babai}}] \label{lemma:johnsonaut}
	If $t \geq 2$ and $k \geq 2t+1$, then $\Aut(\mathfrak{J}(k,t)) = \Sym^{(t)}(\Omega)$ where $\Omega$ is the set of $k$ elements in the preceding definition.
\end{lemma}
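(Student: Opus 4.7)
The easy inclusion $\Sym^{(t)}(\Omega) \leq \Aut(\mathfrak{J}(k,t))$ is immediate and I would dispatch it first: any $\alpha \in \Sym(\Omega)$ acting on $\binom{\Omega}{t}$ preserves $\abs{T_1 \setminus T_2}$, hence each relation $R_i$. For the hard direction I fix $\sigma \in \Aut(\mathfrak{J}(k,t))$ and reduce to studying the automorphism group of the underlying Johnson graph $J(k,t) \coloneqq (\binom{\Omega}{t}, R_1)$, whose edges join the $t$-subsets intersecting in $t-1$ elements. The scheme relations are recoverable from graph distance in $J(k,t)$ (indeed $(T_1,T_2) \in R_i \iff d_{J(k,t)}(T_1,T_2) = i$), so $\Aut(\mathfrak{J}(k,t)) = \Aut(J(k,t))$, and it suffices to show the latter equals $\Sym^{(t)}(\Omega)$.

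The combinatorial heart of the argument lies in classifying the maximum cliques of $J(k,t)$. Two natural families present themselves: the \emph{stars} $C_S \coloneqq \{T \in \binom{\Omega}{t} : S \subseteq T\}$ for $S \in \binom{\Omega}{t-1}$, each of size $k - t + 1$; and the \emph{triangles} $\widehat{C}_A \coloneqq \{T \in \binom{\Omega}{t} : T \subseteq A\}$ for $A \in \binom{\Omega}{t+1}$, each of size $t+1$. A case analysis on any three pairwise $(t-1)$-intersecting $t$-subsets would show that every maximal clique is of exactly one of these two types: either all members share a common $(t-1)$-subset, or they all lie inside a common $(t+1)$-subset. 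Since $k \geq 2t+1$ yields $k - t + 1 \geq t + 2 > t+1$, the maximum cliques are precisely the stars, indexed bijectively by $\binom{\Omega}{t-1}$.

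Therefore $\sigma$ permutes the stars and induces a bijection $\tau \colon \binom{\Omega}{t-1} \to \binom{\Omega}{t-1}$ via $\sigma(C_S) = C_{\tau(S)}$. A direct computation of intersections yields $\abs{C_{S_1} \cap C_{S_2}} = 1$ exactly when $\abs{S_1 \cap S_2} = t-2$ and $0$ otherwise (for distinct $S_1,S_2$), so $\tau$ preserves $R_1$ of $\mathfrak{J}(k, t-1)$. I would now induct on $t$. The base case $t = 2$ is trivial since $\tau$ is a bijection of $\binom{\Omega}{1} \cong \Omega$, i.e., an element $\alpha \in \Sym(\Omega)$. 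For $t > 2$ the inductive hypothesis is applicable because $k \geq 2t+1 \geq 2(t-1)+1$, and it supplies $\alpha \in \Sym(\Omega)$ with $\tau = \alpha^{(t-1)}$. Finally I verify $\sigma = \alpha^{(t)}$: any $T \in \binom{\Omega}{t}$ lies in $\bigcap_{x \in T} C_{T \setminus \{x\}}$, and applying $\sigma$ places $\sigma(T)$ inside $\bigcap_{x \in T} C_{\alpha(T) \setminus \{\alpha(x)\}}$, whose only element is the $t$-subset $\alpha(T)$.

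The main obstacle is the maximum clique classification: showing that three pairwise $(t-1)$-intersecting $t$-subsets that do not share a common $(t-1)$-subset are necessarily contained in a common $(t+1)$-subset requires a careful combinatorial argument tracing the symmetric differences. Once this is established, the inequality $k \geq 2t+1$ enters cleanly to separate stars from triangles by size, and the remainder is a formal induction plus the short intersection verification.
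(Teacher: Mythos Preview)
The paper does not actually prove this lemma; it merely states it with a citation to \cite[Proposition~2.5.23]{babai} and moves on to the next section. So there is no in-paper proof to compare against.

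That said, your argument is correct and is the standard one. The reduction from the scheme to the graph $J(k,t)$ via the distance characterisation of $R_i$ is valid, the classification of maximal cliques into stars $C_S$ and ``triangles'' $\widehat{C}_A$ is the known dichotomy (your hint about tracing three pairwise $(t-1)$-intersecting sets is exactly how it goes), and the hypothesis $k \geq 2t+1$ is used precisely where you place it, to separate $k-t+1 > t+1$ so that only stars are maximum. The induction on $t$ is clean: the intersection identity $\abs{C_{S_1} \cap C_{S_2}} = 1 \iff \abs{S_1 \cap S_2} = t-2$ shows $\tau$ is an automorphism of $J(k,t-1)$, the base $t=2$ lands on $\binom{\Omega}{1}$, and the final recovery of $\sigma(T)$ from the intersection $\bigcap_{x\in T} C_{\alpha(T)\setminus\{\alpha(x)\}}$ is correct since that intersection is the singleton $\{\alpha(T)\}$.

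One small point worth tightening in a write-up: you invoke the inductive hypothesis for $\Aut(J(k,t-1))$, but strictly speaking you have only shown $\tau$ preserves $R_1$. This is fine because you already argued $\Aut(J(k,t)) = \Aut(\mathfrak{J}(k,t))$ in general, so preserving $R_1$ suffices; just make that dependence explicit when you write it out.
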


\section{Twins and Symmetry Defects}
For combinatorial arguments we need more notions to describe correspondences and symmetries. A recurring theme are twins.
\begin{defn}
Let $G \leq \Sym(\Omega)$. Two elements $x, y \in \Omega$ are called \emph{twins}\footnote{Babai distinguishes strong und weak twins. These technical details shall not bother us. What we call \emph{twins} are \emph{strong twins} in Babai's nomenclature.} with respect to $G$ if $x=y$ or the transposition $(x\ y)$ is contained in $G$.
The notion of being a twin induces an equivalence relation on $\Omega$. The partitions inferred from this relation are said to be the \emph{twin classes} of $\Omega$.
\end{defn}
Clearly, the relation is symmetric and reflexive. Transitivity is implied by the fact that $(y\ z)(x\ y)(y\ z) = (x\ z)$.
\begin{defn}
Let $G \leq \Sym(\Omega)$. Let $T \subseteq \Omega$ be a smallest subset of $\Omega$ such that $\Omega \setminus T$ is symmetrical, i.e. all distinct pairs drawn from $\Omega \setminus T$ are twins with respect to $G$. We call $\abs T$ the \emph{symmetry defect} and $\abs T/\abs \Omega$ the \emph{relative symmetry defect} of $G$.
\end{defn}
In other words, $\Omega \setminus T$ is the biggest twin class in $\Omega$ by cardinality. For example, if we take \[G \coloneqq \Sym(\{1,\dots,4\}) \times \Sym(\{5,\dots,12\}) \leq \Sym(\{1,\dots,12\}), \] then $T = \{1,\dots, 4\}$ and the relative symmetry defect is $1/3$.
The preceding definitions for general groups naturally extend to relational structures.
\begin{defn} \label{def:symrelstruct}
	Let $\mathfrak{X}$ be a relational structure on $\Gamma$. Two elements $x, y \in \Gamma$ are called \emph{twins} with respect to $\mathfrak{X}$ if they are twins with respect to $\Aut(\mathfrak{X})$. Analogously, the \emph{(relative) symmetry defect} of $\mathfrak{X}$ is the (relative) symmetry defect of $\Aut(\mathfrak{X})$.  
\end{defn}

As an example, we state one of Babai's corollaries which will be relevant in \cref{sec:agg-cert}. Such results will allow us to perform efficient reduction. Remember that we have identified digraphs and binary partition structures in \cref{sec:combi}.

\begin{lemma}[Symmetry defect of digraphs, {\cite[Corollary~2.4.13]{babai}}] \label{cor:2413}
	Let $X=(V,E)$ be a non-trivial irreflexive biregular digraph. Suppose $\abs V \geq 4$. Then the relative symmetry defect of $X$ is $\geq 1/2$.
\end{lemma}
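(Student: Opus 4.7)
The plan is to show that any twin class of $X$ has size at most $\abs V/2$, from which the lemma follows immediately. Set $n = \abs V$, let $d$ denote the common in- and out-degree, and let $S \subseteq V$ be a twin class of maximum size $s$. The point of departure is that every transposition $(x\ y)$ with distinct $x, y \in S$ lies in $\Aut(X)$, so the full symmetric group $\Sym(S)$ (acting trivially on $V \setminus S$) embeds into $\Aut(X)$. Invariance of $E$ under this subgroup, together with the observation that $(x,y) \in E \iff (y,x) \in E$ for any twin pair, yields two structural dichotomies: \emph{(i)} the induced subgraph on $S$ is either edgeless or contains every ordered pair of distinct vertices of $S$; and \emph{(ii)} for each $z \in V \setminus S$, the set $\{x \in S \mid (x,z) \in E\}$ is either empty or all of $S$, with the analogous dichotomy for in-edges.

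Writing $\epsilon \in \{0,1\}$ for the indicator of (i) and $a = \abs{\{z \in V \setminus S \mid (x,z) \in E\}}$ for any $x \in S$, biregularity yields the identity $d = \epsilon(s-1) + a$. Now assume for contradiction that $s > n/2$; the argument splits according to whether $s > d$ or $s \leq d$. In the first sub-case, observation (ii) combined with $\deg^-(z) = d < s$ forces every $z \in V \setminus S$ to receive no edges from $S$, and symmetrically no edges run from $V \setminus S$ into $S$. Hence $E$ decomposes between $S \times S$ and $(V \setminus S) \times (V \setminus S)$. If additionally $\epsilon = 0$, all vertices of $S$ have degree $0$, so $d = 0$ and $E = \emptyset$, contradicting non-triviality; if instead $\epsilon = 1$, then $d = s - 1$, but each $z \in V \setminus S$ must find its $d$ out-neighbors inside $(V \setminus S) \setminus \{z\}$, forcing $s - 1 \leq n - s - 1$ and hence $s \leq n/2$, again a contradiction.

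In the remaining case $s \leq d$, I would pass to the complement $\bar X = \left(V, (V \times V \setminus \diag V) \setminus E\right)$, which is irreflexive, biregular of degree $n - 1 - d$, non-trivial whenever $X$ is, and shares its automorphism group (hence its twin structure) with $X$. Since $d \geq s > n/2$, the complementary degree satisfies $n - 1 - d < s$, so $\bar X$ falls into the previous sub-case and yields the same contradiction.

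The main delicacy lies in establishing the dichotomies (i) and (ii), as they rely on the fact that each twin pair forces the corresponding transposition to act as an automorphism, yielding coordinated mutual adjacency or mutual non-adjacency both inside $S$ and between $S$ and $V \setminus S$. Once these are secured, the degree counts and the complement trick close the argument cleanly.
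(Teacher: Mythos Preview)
The paper does not actually prove this lemma; it merely states it and cites \cite[Corollary~2.4.13]{babai}. There is therefore no in-paper argument to compare against. Your proof is correct and self-contained: the two dichotomies follow directly from the fact that every transposition of elements of $S$ lies in $\Aut(X)$, and the case split on $s>d$ versus $s\leq d$ together with the complement trick cleanly forces $s\leq n/2$. One tiny edge case you glide over is $s=n$ in the $\epsilon=1$ branch (there is then no $z\in V\setminus S$ to appeal to), but in that situation $E=V\times V\setminus\diag V$, which is trivial, so the contradiction is immediate anyway.
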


\section{Colored partitions}
We summarize some definitions from \cite[§5]{babai} which will be necessary to formulate the Design Lemma.
\begin{defn} A \emph{colored partition} $\Pi$ of a set $\Omega$ is a coloring of the elements of $\Omega$ along with a partition of each color class, i.e.\@ each set of elements of a given color. Let $C_1, \dots, C_r$ denote the color classes and $\{B_{ij} \mid 1 \leq j \leq k_i\}$ the blocks of $C_i$ where $k_i$ denotes the number of blocks in $C_i$. Let $0 < \alpha \leq 1$.  Then $\Pi$ is said to be a \emph{colored $\alpha$-partition} if it satisfies the following conditions: \label{def:colored-partition}
\begin{enumerate}
	\item For each color class $C_i$ such that $\abs{C_i} \geq 2$ all blocks have size $\abs{B_{ij}} \geq 2$.
	\item All blocks $B_{ij}$ satisfy $\abs{B_{ij}} \leq \alpha \abs{\Omega}$.
	\item All blocks within the same color class are of same size. \label{item:apart3}
\end{enumerate} \label{def:apart}
\end{defn}
The original definition does not contain \cref{item:apart3}. However, we can canonically refine any structure which satisfies only the other two conditions such that it satisfies the last. We encode the size of the block in the color. Automorphism and isomorphisms of colored partitions must preserve the color classes but can permute the blocks within them. Of course, also the elements within a block can be permuted.

\chapter{Algorithmic Building Blocks}
\label{sec:bblocks}
In this chapter we will present basal algorithmic strategies which will be used throughout the entirety of this thesis. First, we will explain how to determine orbits and blocks of group actions. Schreier-Sims' method for computing fundamental group theoretic objects such as generators, subgroups, kernels and cosets etc.\@ will be the subject of the second section. Thirdly, a brief introduction to Luks' techniques for reduction and recurrence will be provided. At the end of the chapter we will discuss other parts of Babai's algorithm such as the Extended Design Lemma which will only appear as a black box.

\section{Orbits and Blocks}
\label{sec:orbitsblocks}

Orbits and blocks are fundamental objects for describing group actions, cf.\@ \cref{def:orbits}. At numerous points, we will need to map out the orbit structure or require systems of blocks. From a computational perspective, group actions are described by the images of the points in the domain under the action of the generators. See \cite[§§4.1,~4.3]{holt} for background information and more efficient algorithms.

We can determine the orbits of $G \leq \Sym(\Omega)$ given by a set of generators $A$ by computing the connected components of the graph with vertices $\Omega$ that contains an edge linking $\omega_1, \omega_2 \in \Omega$ whenever there exists an $a \in A$ sending $\omega_1$ to $\omega_2$. Constructing the graph takes $O(\abs A \abs \Omega)$. For every vertex we apply a breadth-first or depth-first search in $O\left(\abs \Omega^2\right)$ obtaining the connected components which are precisely the orbits of the action. This takes in total $O\left(\abs A \abs \Omega + \abs\Omega^3 \right)$. By a remark in \cref{sec:permgroup}, $\abs A$ can be assumed to be polynomially bounded in $\abs \Omega$. Hence, computing orbits takes polynomial time.

The orbit of a single element can be rapidly extracted from this result. Moreover, we can determine in polynomial time whether an action is transitive.

Suppose now that $G$ is transitive. We want to determine whether its action is furthermore primitive. Following \cite[Proposition~4.4]{sims}, we analyze for given distinct $a,b \in \Omega$ the graph $\mathcal{G}_{a,b} \coloneqq \left( \Omega, \left\{ \{a,b\}^g\ \middle|\ g \in G \right\} \right)$ with vertices $\Omega$ and edges taken from the orbit of $\{a,b\}$ under the induced action of $G$ on $\binom{\Omega}{2}$. Again using breadth- or depth-first search we can determine the connected components of $\mathcal{G}_{a,b}$ in polynomial time. The connected components cannot be singletons since by transitivity every $\Omega \ni c = a^g$ is adjacent to $b^g$ for a suitable $g \in G$. 

Each connected component $C$ forms a block: Let $g \in G$ be fixed. Suppose that  $C^g \cap C \neq \emptyset$. Then there exist $c, d \in C$ such that $c = d^g$. Thus, in $C$ we have a path $d = a^{h_1} \rightarrow b^{h_1} \rightarrow \dots \rightarrow a^{h_k} \rightarrow b^{h_k} = c = d^g$ for certain $h_i \in G$. Let $e \in C$ be arbitrary.  This means that there exist $h'_i \in G$ such that,
\[
	e = a^{h'_1} \rightarrow  \dots \rightarrow  b^{h'_{k'}} = d = a^{h_1} \rightarrow \dots \rightarrow b^{h_k} = d^g = b^{h'_{k'}g}  \rightarrow \dots \rightarrow a^{h'_1 g} = e^g.
\]
Hence, $e^g \in C$ and $C^g \subseteq C$. Moreover, if $a,b \in C$, then $C$ is the smallest block containing $a$ and $b$. Suppose that there exists a smaller block $C'$ containing $a$ and $b$, i.e. $\abs{C'} < \abs{C}$. Then by \cref{lemma:blockcap}, $C' \cap C \subsetneq C$ is as well a block. Since $C$ is connected there exists an edge $\{c,c'\}$ in $C$ connecting $C' \cap C$ with its complement, i.e.\@ $c \in C \setminus (C' \cap C)$ and $c' \in C' \cap C$. By construction, there exists $g \in G$ such that $\{a,b\}^g = \{c,c'\}$. But $C' \cap C$ is supposed to be a block implying that $c \in C' \cap C$, a contradiction.

We observe that $G$ is primitive if and only if there exists $a \in \Omega$ such that $\mathcal{G}_{a,b}$ is connected for all $b \in \Omega$: 
Let $a,b \in \Omega$ such that $\mathcal{G}_{a,b}$ is disconnected. Then the connected components of this graph form non-trivial blocks. Thus, $G$ is not primitive. Conversely, assume that $\Gamma$ is a non-trivial block of $G$. Choose $a \in \Omega$ with the property stated in the claim. Then $\mathcal{G}_{a,b}$ is connected for arbitrary $b \in \Gamma$. Choose $c \in \Gamma \setminus \{b\}$. $\Gamma$ is a block containing $c$ and $b$. But since $\mathcal{G}_{a,b}$ is connected, its only connected component $\Omega$ is the smallest block containing $c$ and $b$. $\Gamma$ must equal $\Omega$, again a contradiction.

We have justified a method for testing primitivity in polynomial time. We pick an arbitrary $a \in \Omega$ and scan through all $b \in \Omega$ checking whether $\mathcal{G}_{a,b}$ is disconnected. If one of the graphs is disconnected, we know that $G$ is imprimitive, otherwise $G$ is primitive. This idea for verifying primitivity was efficiently implemented by Atkinson, cf.\@ \cite[§4.3]{holt}. However, any polynomial algorithm suffices, as always, for our needs. \Cref{fig:orbital-config} shows an example for the correspondence of connected components and blocks.

Moreover, we can compute a minimal system of blocks in polynomial time supposing that $G$ is imprimitive. For suitable $a,b \in \Omega$ we compute the smallest block $B$ containing both elements as described above. We then analyze the action of $G$ on the systems of blocks given by $\{B^g \mid g \in G\}$. If it is primitive, we have found the desired minimal system. In the contrarian case, we compute a new system of blocks for this action. The new blocks are courser than their predecessors guaranteeing that this process terminates after $\leq \abs \Omega$ iterations.

It remains to be seen how to compute the stabilizer of a system of blocks $\mathcal{B}$ in polynomial time. Reverting to \cref{def:orbits} and anticipating \cref{sec:schreier-sims}, this is simple. We consider the homomorphism $G \to \Sym(\mathcal{B})$ induced by the action on the blocks. Using Schreier-Sims we can compute its kernel which is precisely the desired stabilizer.

\section{Schreier-Sims and derived algorithms}
\label{sec:schreier-sims}

At many points throughout the algorithm we have to compute group theoretic objects such as stabilizers, kernels and preimages. Schreier-Sims is the fundamental toolkit that allows us to complete these tasks in polynomial time. Note that there exist many faster and more sophisticated strategies for solving these problems deterministically or employing randomization, cf.\@ \cite{seress}. Since we are interested in an overall quasi-polynomial bound, we can afford applying subprocedures with any polynomial complexity. Following \cite[§1.2]{luks}, we state a basic version of Schreier-Sims which is definitely rapid enough for our needs.

\begin{proc}[Schreier-Sims, {\cite[§2.1.1]{helfgott}}, {\cite[§1.2]{luks}}]
	\label{proc:schreier-sims}
	\Input a set $\Omega = \{x_1,\dots,x_n\}$, a set $A$, such that $A$ generates $G \leq \Sym(\Omega)$.\\
	\Output sets $C_i$ of representatives of $G_i/G_{i+1}$, such that $\bigcup_{i\leq j <n-1} C_j$ generates the chain of pointwise stabilizer $G_i = G_{(x_1,\dots, x_i)}$ for $0 \leq i < n-1$, i.e.\@
	\[ G = G_0 \geq G_1 \geq \dots \geq G_{n-1} = \{1\}. \]
	\Complexity $O(n^5+n\abs{A})$ or $O(n^5)$ by the following remark
\end{proc}

We will roughly present the algorithm in order to justify the claim on the complexity and the derived procedures stated below. Central to the procedure is the function \emph{filter} which determines the generating set $C_i$ that a group element belongs to.

\begin{procedure}
	{Filter}
	{a group element $g \in G$, current $(C_i)$}
	{modified $(C_i)$}
	{filter}
	\item \textbf{for} $i=0, \dots, n-2$:
	\item \cindent \textbf{if} $\gamma^{-1} \alpha \in G_{i+1}$ for some $\gamma \in C_{i}$: \label{line:filter-cond}
	\item \cindent \cindent $\alpha \leftarrow \gamma^{-1}\alpha$
	\item \cindent \cindent add $\alpha$ to $C_i$, remember $x_{i+1}^\alpha$
	\item \cindent \cindent \textbf{return} $(C_i)$, \textit{enlarged set of representatives}
	\item \cindent \textbf{end}
	\item \textbf{end}
	\item \textbf{return} $(C_i)$, \textit{nothing changed} 
\end{procedure}

The condition in line~\ref{line:filter-cond} is equivalent to $x_{i+1}^\alpha = x_{i+1}^\gamma$. This can be surely tested in constant time. We store the elements of $C_i$ in a dictionary indexed by $x_{i+1}^\alpha$. This allows us to find a $\gamma \in C_i$ which satisfies the condition in $O(1)$. In this way, line~\ref{line:filter-cond} contains just a dictionary look-up and the entire procedure runs in $O(n)$. We can now state the instructions for \cref{proc:schreier-sims}:
\begin{code}
\item initialize $C_i \leftarrow \{1\}$ for all $i = 0, \dots, n-1$
\item filter $A$, the set of generators for $G$
\item filter $C_iC_j$ for $i \geq j$
\item \textbf{return} $(C_i)$
\end{code}

For all $0 \leq i < n-1$, it holds that $\abs{C_i} = \gindex{G_i}{G_{i+1}} \leq n-i \leq n$, as the coset representatives of $G_{i+1}$ in $G_i$ are limited by the possible (non-stabilized) images of $x_{i+1}$. Therefore, we obtain the desired result in $O(n^5+n \abs A)$.

Schreier-Sims allows us to refine the set of generators $A$. As $G_0 = G$ is generated by $C = \bigcup_{j < n-1} C_j$, we have $\abs{C} \leq \sum_{i=0}^{n-2} (n-i)$. Thus, we can find a set of generators for $G$ of size $O(n^2)$ ensuring that the new generators are products of the former generators. Therefore, we may assume from now on that all groups acting on sets of size $n$ are generated by $O(n^2)$ elements.

Several basal tasks can be fulfilled in polynomial time using Schreier-Sims. Let $G \leq \Sym( \Omega)$ and $\abs \Omega = n$.

First of all, we can compute $\abs{G}$ and verify whether $g \in \Sym(\Omega)$ satisfies $g \in G$. We have that $\abs G = \prod_{i=0}^{n-2} \abs{C_i}$ and $g \in G$ if and only if $\abs{\generate{G,g}} = \abs G$.
Secondly, we can describe the pointwise stabilizer $G_{(\Delta)}$ of a set $\Delta \subseteq \Omega$ by ordering $\Omega$ such that the elements of $\Delta$ come first, calling Schreier-Sims and returning $\bigcup_{ \abs{\Delta} \leq j < \abs{\Omega}-1} C_j$. This is further explained in \cite[§1.2]{luks} and \cite[§5.1.1]{seress}.

Of interest will also be whether the group $G$ contains $\Alt(\Omega)$, or even $\Sym(\Omega)$. Let without loss of generality $\Omega = \{1,\dots,n\}$. For this, \cite[§10.2]{seress} does not only describe excelling algorithms but also presents useful generators of $\Sym_n$ and $\Alt_n$, $n\geq 2$, which together with the preceding membership test suffice to verify $\Alt_n \leq G$ polynomially:
\begin{align}
\Sym_n &= \generate{(1\ 2\ \cdots\ n), (1\ 2)} \\
\Alt_n &= \begin{cases}
		    \generate{(3\ 4\ \cdots\ n), (1\ 2\ 3)}, & n \text{ odd} \\
		    \generate{(1\ 2)(3\ 4\ \cdots\ n), (1\ 2\ 3)}, & n \text{ even} \\
          \end{cases} \label{eq:generators-alt}
\end{align}

Following \cite[Exercise~2.1c]{helfgott}, we are furthermore able to describe the preimage $\varphi^{-1}(H)$ of a subgroup $H \leq \Sym(\Omega')$ under a given homomorphism $\varphi: G \to \Sym(\Omega')$ if we assume that $\abs{\Omega'} \ll \abs{\Omega}^{O(1)}$. Similarly, we can compute the preimage of a single element $\sigma \in \Sym(\Omega')$ which is either empty or a coset of $\ker \varphi$, cf. \cite[§5.1.2]{seress}. Homorphisms are given to us, in general, as a set of tuples $\left(g, g^\varphi \right)$ where $g$ runs through a set of generators of the domain group.

When looking at a subgroup $H \leq G$ which has bounded index $\gindex{G}{H} \ll \abs{\Omega}^{O(1)}$ and admits polynomial time membership testing, we are able to describe $H$ in terms of generators of $G$. Furthermore we can compute coset representatives for $H$ in $G$. We do so by using the alternative chain of stabilizers
\[
G \geq H = H_0 \geq H_1 \geq \dots \geq H_{n-1} = \{1\}.
\]
\Cref{proc:filter} needs to be modified for being able to return a set of representatives $C_{-1}$ for $G/H$. For $i=-1$, line~\ref{line:filter-cond} changes to $\gamma^{-1} \alpha \in H$ which can be verified in polynomial time by assumption. Here we find a suitable $\gamma \in C_{-1}$ by iteratively checking $\abs{C_{i-1}} \leq \gindex{G}{H}$ elements. The group $H$ is then described by $\bigcup_{0\leq i < n-1}C_i$ and $C_{-1}$ is a set of coset representatives for $H$ in $G$. Let $\mu(n) \ll n^{O(1)}$ denote the complexity of the membership test. Then the complexity of this modified version of \cref{proc:schreier-sims} is $O\left(\left(\abs A + n^4 + \gindex{G}{H}n^2 \right)\left(\mu(n) \gindex{G}{H} + n \right) \right)$ which is by assumption polynomial.

In some situations we cannot demand that $\gindex{G}{H}$ is polynomially bounded. For example, it may be the case that $\gindex{G}{H}$ grows quasi-polynomially in $n$, e.g. $\gindex{G}{H} \ll n^{O(\log n)}$. By the complexity estimate from above, a call to Schreier-Sims costs then $ n^{O(\log n)}$ assuming that $\abs A$ is not too large. This will be affordable. 

\section{Luks' method}

The following strategies are based on \cite{luks} who proved that the Graph Isomorphism Problem for graphs with bounded degree can be decided in polynomial time. The algorithm that he proposed includes two kinds of reductions which are used many times throughout Babai's algorithm. See \cite[§2.2]{helfgott} and \cite[§3.1]{babai} for more details.

The rather simple strategy is called \emph{weak Luks reduction} and is used whenever we want to pass to a subgroup. The subgroup's index determines the number of subproblems that we have to deal with after the reduction. Clearly, we will require some bounds on that index to ensure efficiency. \emph{Strong Luks reduction} is the more sophisticated method. It can be used to recur on a partition which is invariant under the group's action. The most basal example is the recurrence on orbits. For both strategies we require the notion of partial isomorphisms which formalizes the idea of considering parts of the input only.

\begin{defn}[Partial isomorphisms] Let $\str x, \str y: \Omega \to \Sigma$ be two strings, $G \leq \Sym(\Omega)$ and $\Delta \subseteq \Omega$ a subset called the \emph{window}, then
\[
	\Iso_G^\Delta(\str x, \str y) = \left\{ \sigma \in G\ \middle|\ \str x(x) = \str y(x^\sigma)\ \forall x \in \Delta \right\}
\] 
denotes the set of \emph{partial isomorphisms} with respect to $\Delta$. Correspondingly, $\Aut_G^\Delta(\str x) = \Iso_G^\Delta(\str x, \str x)$ denotes the set of \emph{partial automorphisms}.
\end{defn}

Clearly, $\Iso_G^\Omega (\str x, \str y) = \Iso_G(\str x, \str y)$, cf.\@ \cref{def:stringiso}. The following lemma is crucial in both situations.

\begin{lemma}
\label{lemma:isoaut}
Let $\str x, \str y: \Omega \to \Sigma$ be two strings, $K, K_1, K_2 \subseteq \Sym(\Omega), \sigma \in \Sym(\Omega)$ and $\Delta \subseteq \Omega$ the window. Then the following holds:
\begin{enumerate}
\item \emph{(Shift identity)} $\Iso^\Delta_{K\sigma}(\str x, \str y) = \Iso^\Delta_K\left( \str x, \str{y}^{\sigma^{-1}} \right) \sigma$,
\item \label{lemma:isoaut-item2} $\Iso^\Delta_{K_1 \cup K_2}(\str x, \str y) = \Iso_{K_1}^\Delta (\str x, \str y) \cup \Iso_{K_2}^\Delta (\str x, \str y)$,
\item \label{lemma:isoaut-item3} If furthermore $G \leq \Sym(\Omega)$ is a subgroup which leaves $\Delta$ invariant, then $\Aut^\Delta_G(\str x) \leq G$. For all $\sigma \in \Sym(\Omega)$, $\Iso_{G\sigma}^\Delta(\str x, \str y)$ is either empty or of the form $\Aut_G^\Delta(\str x)\tau$ for any $\tau \in \Iso_{G\sigma}^\Delta(\str x, \str y)$.
\item \emph{(Chain rule, \cite[Proposition~3.1.7]{babai})} %
Let $\Delta_1, \Delta_2 \subseteq \Omega$ invariant as sets under $G \leq \Sym(\Omega)$. Then for a subgroup $G_1 \leq G$ and $\tau \in G$ such that $\Iso^{\Delta_1}_G(\str x,\str y) = G_1 \tau$, it holds that
\[ \Iso^{\Delta_1 \cup \Delta_2}_G(\str x, \str y) = \Iso^{\Delta_2}_{G_1\tau}(\str x, \str y) = \Iso^{\Delta_2}_{G_1} \left( \str x, \str{y}^{\tau^{-1}} \right) \tau. \]
\end{enumerate}
\end{lemma}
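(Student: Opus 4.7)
The four items can be handled in sequence, each being a short unpacking of the definitions that uses only the twist convention \cref{eq:perm-twist}. The plan is to prove the shift identity first, then the union identity (which is immediate), then establish the coset structure in (iii), and finally combine these to derive the chain rule.

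For the shift identity, I would write an arbitrary $\pi \in K\sigma$ as $\pi = k\sigma$ with $k \in K$. The twist gives $\str y^{\sigma^{-1}}(i) = \str y(i^\sigma)$, so for $x \in \Delta$,
\[ \str y(x^\pi) = \str y\bigl((x^k)^\sigma\bigr) = \str y^{\sigma^{-1}}(x^k). \]
Hence $\pi$ preserves $\str x$ and $\str y$ on $\Delta$ if and only if $k$ preserves $\str x$ and $\str y^{\sigma^{-1}}$ on $\Delta$, which is the claimed equality of cosets. The union identity then follows directly from $\pi \in \Iso_{K_1 \cup K_2}^\Delta(\str x, \str y)$ being equivalent to ($\pi \in K_1$ or $\pi \in K_2$) together with the positional condition $\str x(x) = \str y(x^\pi)$ on $\Delta$.

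For (iii), I would first verify that $\Aut^\Delta_G(\str x)$ is a subgroup of $G$; the crucial point is that the $G$-invariance of $\Delta$ ensures $x^{\sigma_1} \in \Delta$ for $x \in \Delta$ and $\sigma_1 \in G$, so that the computation $\str x(x^{\sigma_1 \sigma_2}) = \str x((x^{\sigma_1})^{\sigma_2}) = \str x(x^{\sigma_1}) = \str x(x)$ goes through, and an analogous argument handles inverses. For the coset statement, pick $\tau \in \Iso_{G\sigma}^\Delta(\str x, \str y)$ and take an arbitrary $\pi$ in that set. Then $\pi\tau^{-1} \in G\sigma \cdot (G\sigma)^{-1} = G$, and since $G$ leaves $\Delta$ invariant, for $x \in \Delta$ the image $x^{\pi\tau^{-1}}$ again lies in $\Delta$. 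Hence
\[ \str x(x^{\pi\tau^{-1}}) = \str y\bigl((x^{\pi\tau^{-1}})^\tau\bigr) = \str y(x^\pi) = \str x(x), \]
which gives $\pi\tau^{-1} \in \Aut_G^\Delta(\str x)$ and therefore $\pi \in \Aut_G^\Delta(\str x)\tau$. The reverse inclusion is a symmetric calculation.

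The chain rule then drops out: the defining condition for $\Iso^{\Delta_1 \cup \Delta_2}_G$ is simply the conjunction of the conditions for $\Iso^{\Delta_1}_G$ and $\Iso^{\Delta_2}_G$, so
\[ \Iso^{\Delta_1 \cup \Delta_2}_G(\str x, \str y) = G_1\tau \cap \Iso^{\Delta_2}_G(\str x, \str y) = \Iso^{\Delta_2}_{G_1\tau}(\str x, \str y), \]
and the last equality in the lemma is precisely the shift identity applied to $K = G_1$. The main point of care throughout will be tracking which elements and cosets preserve $\Delta$ (or $\Delta_1, \Delta_2$) setwise; the invariance hypotheses in (iii) and in the chain rule are exactly what is needed for $\pi\tau^{-1}$ and the successive restrictions to remain inside the right subgroups, and without them the decomposition into a coset would break down.
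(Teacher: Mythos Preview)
Your proof is correct and follows essentially the same approach as the paper: the shift identity is unpacked directly from the definition via the twist convention, the chain rule is obtained from the observation $\Iso^{\Delta_1 \cup \Delta_2}_G = \Iso^{\Delta_1}_G \cap \Iso^{\Delta_2}_G$ together with the shift identity, and items (ii) and (iii) are declared ``clear'' in the paper whereas you spell them out. Your added detail for (iii), in particular the explicit use of the $G$-invariance of $\Delta$ to ensure $x^{\pi\tau^{-1}} \in \Delta$, is a welcome clarification of what the paper leaves implicit.
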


\begin{proof}
Clearly, \cref{lemma:isoaut-item2,lemma:isoaut-item3} hold. It is worth looking at the proofs of the other claims in order to internalize the implications of \cref{eq:perm-twist}. The shift identity follows basically from the definition:
\[
\Iso^\Delta_{K\sigma}(\str x, \str y) 
 = \left\{ \tau \in K\ \middle|\ \str x(x) = \str y(x^{\tau\sigma})\ \forall x \in \Delta \right\} \sigma 
 = \left\{ \tau \in K\ \middle|\ \str x(x) = \str y^{\sigma^{-1}}(x^{\tau})\ \forall x \in \Delta \right\} \sigma 
 = \Iso^\Delta_K\left( \str x, \str{y}^{\sigma^{-1}} \right) \sigma.
\]

For the chain rule note that $\Iso^{\Delta_1 \cup \Delta_2}_G(\str x, \str y) = \Iso^{\Delta_1}_G(\str x, \str y) \cap \Iso^{\Delta_2}_G(\str x, \str y)$ which implies the first equation. The second is an application of the shift identity.
\end{proof}

Now we are ready to state the two types of Luks reductions.

\begin{procedure}
	{Weak Luks Reduction}
	{Descriptions of groups $H \leq G \leq \Sym(\Omega)$, two strings $\str x, \str y:\Omega \to \Sigma$, a window $\Delta \subseteq \Omega$}
	{$\Iso_G^\Delta(\str x, \str y)$ by reduction to $\gindex{G}{H}$ instances of $\Iso_H^\Delta\left(\str x, \str y^\sigma\right)$ for certain $\sigma \in G$}
	{weak-luks}
	\item compute a set $S$ right coset representatives of $H$ in $G$ calling \cref{proc:schreier-sims}
	\item \textbf{for} $\sigma_i \in S$:
	\item \cindent collect the $\Iso_H^\Delta \left(\str x, \str y^{\sigma_i^{-1}}\right) \sigma_i$ which are of the form $F \tau_i$ for $F = \Aut_H^\Delta(\str x)$ or empty; if so, set $\tau_i \leftarrow 1$
	\item \textbf{end}
	\item \textbf{return} the combined coset $\generate{F \cup \left\{\tau_i\tau_1^{-1} \ \middle|\  1 < i \leq \abs S \right\}} \tau_1$ \label{line:weak-luks-combine}
\end{procedure}
That the procedure works correctly follows directly from \cref{lemma:isoaut-item2} of \cref{lemma:isoaut} given the coset decomposition $G = \bigcup_i H \sigma_i$. Line~\ref{line:weak-luks-combine} is justified by the fact that 
\begin{equation} \label{eq:coset-union}
\bigcup_{1 \leq i \leq \abs S} F \tau_i = \generate{F \cup \left\{\tau_i\tau_1^{-1}\ \middle|\ 1 < i \leq \abs S\right\}} \tau_1.
\end{equation}
The left-hand side is contained in the right-hand side since $F\tau_i = F\tau_i\tau_1^{-1}\tau_1$. Conversely, the generators on the right are contained in the left-hand side when multiplied with $\tau_1$.

The time complexity crucially depends on $\gindex{G}{H}$. Reverting to \cref{sec:schreier-sims}, the computation of the coset representatives takes polynomial time whenever $H$ admits polynomial membership testing and additionally $\gindex{G}{H} \ll \abs{\Omega}^{O(1)}$. If such a bound is exceeded and $\gindex{G}{H}$ grows quasi-polynomially in $\abs \Omega$, then the execution time grows quasi-polynomially as well. Since we recur to $\gindex{G}{H}$ subproblems, the additive costs do not outweigh the multiplicative costs.

The situation in which we apply strong Luks reduction is slightly more subtle. The action of the group $G$ on $\Omega$ admits an invariant subset $\Delta$ which is partitioned into blocks $\{B_1, \dots, B_m\}$ themselves invariant under $G$. Formally, for all $g \in G$ and $1 \leq i \leq m$ there is $1 \leq j \leq m$ such that $B_i^g = B_j$. Thus, the action of $G$ induces an action on the blocks. We therefore have a homomorphism $\psi: G \to \Sym_m$ where the image acts on the blocks. The kernel of this map is precisely the stabilizer of the blocks, i.e.\@ $\sigma \in \ker \psi$ iff $B_i^\sigma = B_i$ for all $i=1, \dots, m$, cf.\@ \cref{def:orbits}.

\begin{proc}[Strong Luks Reduction]
	\label{proc:strong-luks}
	\Input a group $G \leq \Sym(\Omega)$, a $G$-invariant subset $\Delta \subseteq \Omega$, a $G$-invariant partition $\{B_1, \dots, B_m\}$ of $\Delta$, two strings $\str x, \str y: \Omega \to \Sigma$ \newline
	\Output $\Iso_G^\Delta(\str x, \str y)$ by reduction to $m \gindex{G}{\ker \psi}$ instances of $\Iso_{M_i}^{B_i}\left(\str x, \str y^{\sigma_i} \right)$ for certain $\psi: G \to \Sym_m$ as described above, $M_i \leq \ker \psi$, $\sigma_i \in G$
\end{proc}

We present the steps of the procedure in full prose: First we compute the kernel of $\psi$ and call it $H$. This can be done by using Schreier-Sims in polynomial time, cf.\@ \cref{sec:schreier-sims}, since $m \leq \abs \Omega$. Using weak Luks reduction with input groups $H \leq G$ we simplify the problem to $\gindex{G}{\ker \psi}$ instances of subproblems of the form $\Iso_H^\Delta(\str x, \str y^{\sigma_j})$. Every orbit of $H$ acting on $\Delta$ is now contained in one of the blocks $B_i$. We can look for partial isomorphism on each of the blocks separately. We iteratively let $B \in \{B_1, \dots, B_m\}$ and analyze the block following \cite[Observation~3.1.10]{babai}:

That $\Iso_{\restrict{H}{B}}\left(\restrict{\str x}{B}, \restrict{\str y^{\sigma_j}}{B}\right)$ is empty if and only if $\Iso_H^B(\str x, \str y^{\sigma_j})$ is empty, will allow us to pass to shorter strings. While restricting $H$ to $\restrict{H}{B}$, we remember how the elements of $H$ act on the rest of the domain. Maintaining this additional information does not make the subsequent computations more complicated because we do not need to remember the entire action of $H$. For every element of $\restrict{H}{B}$ we just remember one possible extension to $H$ resulting in negligible additional data.

 In this fashion we compute the subproblem $\Iso_{\restrict{H}{B}}\left(\restrict{\str x}{B}, \restrict{\str y^{\sigma_j}}{B}\right)$ and obtain either an empty result or a set of generators $R \subseteq H$ and a $\tau \in H$, such that $R$ generates $\Aut_{\restrict{H}{B}}(\str x)$ and $\restrict{\tau}{B} \in \Iso_{\restrict{H}{B}}\left(\restrict{\str x}{B}, \restrict{\str y^{\sigma_j}}{B}\right)$. Schreier-Sims can then be used to compute generators for $H_{(B)}$ which together with $R$ generate $H' = \Aut_H^B(\str x)$. Thus, $\Iso^B_H(\str x, \str y^{\sigma_j}) = \Aut_H^B(\str x)\tau$. 
By the chain rule, cf.\@ \cref{lemma:isoaut},  $\Iso^{\Delta}_H(\str x, \str y^{\sigma_j}) = \Iso^{\Delta\setminus B}_{H'}\left(\str x, (\str y^{\sigma_j})^{\tau^{-1}}\right) \tau$. We proceed with the next block until there are no more blocks left.

It can be readily verified that we have divided the problem into $m\gindex{G}{\ker \psi}$ subproblems with strings of length $\leq \max_i \abs{B_i}$. The additive costs are polynomial in $\abs \Omega$ when leaving the computation of coset representatives for $\ker \psi$ in $G$ apart. If $\gindex{G}{\ker \psi}$ is bounded polynomially in $\abs \Omega$, then the entire reduction costs only a polynomial amount of time.

A straightforward application of strong Luks reduction is the recurrence on orbits. Suppose that $G$ acts intransitively on $\Omega$. Then the orbits of this action form a partition of $\Omega$ into blocks. The induced action on the blocks is trivial as no permutation moves an element from one block to another. Thus, $\ker \psi = G$. Strong Luks reduction yields as many subproblems as $G$ has orbits on $\Omega$. The short strings are as long as the orbits and in total as long as the original domain. Of course, this reduction was nothing else than the application of the chain rule, cf.\@ \cref{lemma:isoaut}, wrapped in a fancy subprocedure. Importantly, we did not have to compute coset representatives in this case. The costs of this reduction on orbits are therefore negligible.

Most frequently, we will apply strong Luks reduction when $G$ acts transitively but not primitively on $\Omega$. Then we can compute a minimal system of blocks in polynomial time, cf.\@ \cref{sec:orbitsblocks}. Suppose we have found $m$ blocks. They equipartition $\Omega$. Consequently, we can reduce to $m\gindex{G}{N}$ subproblems of length $\abs \Omega/m$ where $N$ denotes the stabilizer of these blocks.

\section{Identification of groups and schemes}
In this section we want to map out isomorphisms between permutation groups which are known to be abstractly permutation isomorphic, cf.\@ \cref{def:permiso}. Let $\Omega$ be a set of size $n$ and let $m \in \mathbb{N}$ be an integer. We are given a permutation group $G \leq \Sym(\Omega)$ satisfying $G \cong \Alt_m$. Two permutation isomorphic groups must admit a bijection between their domains. Thus, if $m > n$, $\Alt_m$ must act as $\Alt^{(k)}_m$ for a given $1 < k \leq m/2$ such that $n = \binom{m}{k}$. 

We will impose an additional condition of the form $k^2 \ll m$ in order to simplify the procedure. However, this does not really come with a loss of generality. If contrarily $m \ll k^2$, we have that, 
\[
n^{\log n} 
\gg \binom{m}{\floor{\sqrt{m}}}^{\log \binom{m}{\floor{\sqrt{m}}}} 
\gg \frac{m!}{\floor{\sqrt m}! (m -\floor{\sqrt m})!} \left( \frac{m}{\sqrt{m}} \right)^{\sqrt{m} (\sqrt m\log m -1)}
\geq m! \sqrt{m}^{m \log m - 2\sqrt{m} - 2m}
\gg m!,
\]
because in general, $\binom{\nu}{\kappa} \geq \left(\frac{\nu}{\kappa}\right)^\kappa$ and $\log \binom{m}{\floor{\sqrt m}} \geq \floor{\sqrt m} \log \frac{m}{\floor{\sqrt m}} \asymp \sqrt m \log m$ for arbitrary $\nu, \kappa, m \in \mathbb{N}$. Hence, in this case $\abs G = m!/2 \ll n^{\log n}$ and we can afford to brutally iterate through the entire group and compute the desired information.
If, for example, $G$ is a quotient $H/N$ as in \cref{sec:first-steps}, the bound $\gindex{H}{N} \ll n^{\log n}$ makes a weak Luks reduction, cf.\@ \cref{proc:weak-luks}, to the smaller group affordable. In \cref{sec:align} when we will need the procedure for identifying two Johnson schemes, the bound allows us to freely scan through all possibilities.

\begin{proc} \label{proc:iota}
\Input set $\Omega$, $n = \abs\Omega$, integers $m \in \mathbb{N}$, $1 \leq k \leq m/2$ such that $n = \binom{m}{k}$ and $m > k(k+1) +1$, a group $G \leq \Sym(\Omega)$ satisfying $G \cong \Alt_m$. \\
\Output a set $\Gamma$ of size $m$, a bijection $\iota: \Omega \to \binom{\Gamma}{k}$ and an isomorphism $\varphi: G\to\Alt(\Gamma)$ such that
\begin{equation} \label{eq:permiso}
	\iota\left(\omega^g\right) = \iota(\omega)^{\varphi(g)} \quad \forall \omega \in \Omega,\ g \in G.
\end{equation}
\Complexity polynomial in $n$.
\end{proc}

Providing an isomorphism means, as always, to give images of the generators of the domain group, here $G$. The explanation follows \cite[§2.8]{helfgott} who quotes \cite{bls}. We will define $\iota$ and $\varphi$ at the very end of the procedure. When referring to $\iota_0$ and $\varphi_0$ until then, we think of general maps satisfying \cref{eq:permiso}. Similarly, $\Gamma_0$ remains an abstract set of $m$ elements until we define $\Gamma$ at the end of the procedure. We infer the correspondence between $G$ and $\Alt_m^{(k)}$ from the orbital structures of the two group actions.

For two sets $S_1, S_2 \in \binom{\Gamma_0}{k}$ and arbitrary $\sigma \in \Alt_m^{(k)}$, clearly $\abs{S_1 \cap S_2} = \abs{S_1^\sigma \cap S_2^\sigma}$. Therefore two pairs $(S_1,S_2),(T_1,T_2) \in \binom{\Gamma_0}{k} \times \binom{\Gamma_0}{k}$ can only be in the same orbital of $\Alt_m^{(k)}$ if $\abs{S_1\cap S_2} = \abs{T_1\cap T_2}$. Conversely, $\Alt_m$ acts $(m-2)$-transitively, cf.\@ \cref{ex:alttrans}. Hence, given two such pairs satisfying $\abs{S_1 \cup S_2} = \abs{T_1 \cup T_2} \leq 2k$ we can find an element in $\Alt_m^{(k)}$ mapping them onto each other whenever $2k \leq m-2$. This is clearly the case under the assumption that $m > k(k+1)+1$. Consequently, we have mapped out the orbital structure of $\Alt_m^{(k)}$: The orbitals are $\{R_0, \dots, R_k\}$ for $R_i = \left\{(S_1,S_2) \in \binom{\Gamma_0}{k} \times \binom{\Gamma_0}{k}\ \middle|\ \abs{S_1 \cap S_2} =i \right\}$. Ordering them by size allows us to identify the corresponding $G$-orbitals. 

We claim that $\abs{R_{i+1}} < \abs{R_i}$ for all $0 \leq i < k$.
Suppose that $m > k(k+1)$. Let $0 \leq i < k$ be arbitrary. Under these assumptions,
$(m-i)(i+1) \geq m-i > m-k > k(k+1) -k = k^2 > (k-i)^2.$
We proceed with the main estimation:
\[
\abs{R_i} = \binom{m}{i}\binom{m-i}{k-i}^2 = \frac{m!(m-i)!}{i!(k-i)!^2(m-k)!^2} = \abs{R_{i+1}} \frac{(m-i)(i+1)}{(k-i)^2} > \abs{R_{i+1}}.
\]

Hence, the largest orbital is $R_0$, the smallest is $R_k$.
We map out the orbital structure of $G$ acting on $\Omega$ in polynomial time, cf.\@ \cref{sec:orbitsblocks}. Let $\Xi \subseteq \Omega \times \Omega$ denote the smallest orbital of $G$ outside the diagonal $\diag \Omega$. Let $\Delta \subseteq \Omega \times \Omega$ be the largest orbital. By the argument from above, $\iota_0(\Xi) = R_{k-1}$ and $\iota_0(\Delta) = R_0$.

\begin{figure}
\centering
\begin{tikzpicture}
\node[inner sep=0pt, anchor=north west] (russell) at (0,0) {\includegraphics[width=6cm]{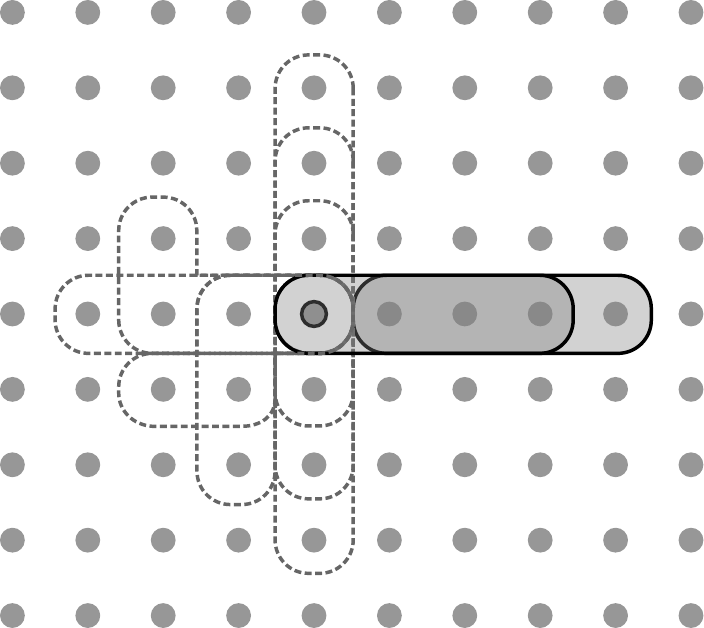}};
\node[inner sep=0pt, anchor=north west] at (.3,-.3) {$\Gamma_0$};
\node[inner sep=0pt, anchor=north west] at (.7,-4.75) {$\iota_0(B(x,y))$};
\node[inner sep=0pt, anchor=north west] at (3.3,-1.9) {$\iota_0(x)$};
\node[inner sep=0pt, anchor=north west] at (4.6,-3.1) {$\iota_0(y)$};
\end{tikzpicture}
\caption[Schematic example of the construction in \cref{proc:iota} with $k=4$]{Schematic example of the construction in \cref{proc:iota} with $k=4$. The distinct element $\delta(x,y)$ of $\Gamma_0$ that is contained in $\iota_0(x)$ but not in $\iota_0(y)$ is encircled. The dashed shapes represent some $\iota_0(z)$ for $z \in B(x,y)$. They are disjoint to $\iota_0(y)$ while having a non-empty intersection with $\iota_0(x)$. \label{fig:iota}}
\end{figure}

For $(x,y) \in \Xi$ we compute the sets
\begin{align*}
	B(x,y) &= \{z \in \Omega \mid (x,z) \not\in \Delta, (y,z) \in \Delta \}, \\
	C(x,y) &= \Omega \setminus \bigcup_{z \in B(x,y)} \{r \in \Omega \mid (z,r) \in \Delta\}.
\end{align*}
Note that the condition that defines $B(x,y)$ is equivalent to $\iota_0(x) \cap \iota_0(z) \neq \emptyset$ and $\iota_0(y) \cap \iota_0(z) = \emptyset$, since a pair is in $\Delta$ if and only if its $\iota_0$-image is disjoint. Then, we apply the definition:
\begin{align}
	\iota_0(C(x,y)) \nonumber
	&= \iota_0\left( \left\{ a \in \Omega\ \middle|\ (z,a) \not\in \Delta \quad \forall z \in \Omega \text{ such that } (x,z) \not\in \Delta \text{ and } (y,z) \in \Delta \right\} \right) \nonumber \\
	&= \left\{ A \in \binom{\Gamma_0}{k}\ \middle|\ A \cap Z \neq \emptyset \quad \forall Z \in \binom{\Gamma_0}{k}  \text{ such that } \iota_0(x) \cap Z \neq \emptyset \text{ and } \iota_0(y) \cap Z = \emptyset \right\} \nonumber \\
	&= \left\{ A \in \binom{\Gamma_0}{k}\ \middle|\ \delta(x,y) \in A\right\}, \label{eq:iota1}
\end{align}
where $\delta(x,y)$ is the element in the singleton $\iota_0(x) \setminus \iota_0(y)$, that is the unique element in $\iota_0(x)$ that is not simultaneously contained in $\iota_0(y)$.

\Cref{eq:iota1} only holds if $\binom{\Gamma_0}{k}$ is big enough. In particular, it must not happen that a set with non-empty intersection with all $\iota_0(z)$ for $z \in B(x,y)$ does not contain $\delta(x,y)$ but another shared point, cf.\@ \cref{fig:iota}. However, under the assumption $m>k(k+1)$ we can have $z_1,\dots,z_k \in B(x,y)$ such that $\iota_0(z_i) \cap \iota_0(z_j) = \{\delta(x,y)\}$ for all $i \neq j$. They use up $k(k-1)+1$ elements. Since $\iota_0(y)$ must also exist disjointly, we require $k(k-1)+1+k = k^2+1 \leq k(k+1) <m$ elements in $\Gamma_0$, in accordance with the assumption.

Set $\Gamma = \{C(x,y)\mid (x,y) \in \Xi\}$ without multiplicities. We will immediately see that indeed $\abs\Gamma = m$. $B(x,y)$ and $C(x,y)$ can be computed and compared in polynomial time. The action of $G$ on $\Xi$ induces a group action on $\Gamma$. The corresponding isomorphism $\varphi: G \to \Alt(\Gamma)$ is similarly computable in polynomial time. Thus, by definition, $C(x,y)^{\varphi(g)} = C(x^g, y^g)$ for all $g \in G$.

It follows from \cref{eq:iota1} by applying the bijection $\iota_0$ to the left-hand side, that for all $\omega \in \Omega$ and $(x,y) \in \Xi$:
\begin{equation}
	\omega \in C(x,y) \iff \delta(x,y) \in \iota_0(\omega). \label{eq:iota2}
\end{equation}

We have a bijection $j: \Gamma \to \Gamma_0, C(x,y) \mapsto \delta(x,y)$. Clearly, $j$ is onto. Well-definedness and injectivity follow directly from \cref{eq:iota2}.
We finally obtain the bijection $\iota: \Omega \to \binom{\Gamma}{k}$, $\omega \mapsto \{C(x,y) \in \Gamma \mid \omega \in C(x,y)\}$. Since $\varphi$ represents the naturally induced action of $G$ on $\Gamma$, \cref{eq:permiso} is satisfied. We have built a tangible set $\Gamma$ and maps $\iota$ and $\varphi$ which have the desired properties.

\section{Partition pullback}

Throughout the algorithm we maintain an auxiliary set $\Gamma$ which is linked to the permutation group of interests $G \leq \Sym(\Omega)$ and its permutation domain $\Omega$ by an epimorphism $\varphi: G \to \Alt(\Gamma)$ and a surjection $\iota: \Omega \to \binom{\Gamma}{k}$ for some $k \in \mathbb{N}$. $\iota$ has the additional property that the preimage of each singleton in $\Gamma$ is of the same size.

In this section, we study the case in which we are given a partition $\Delta_1, \dots, \Delta_t$ of $\Gamma$. We want to pullback this partition to a partition of $\Omega$ in order to treat the individual subsets separately. See \cite[§5.2]{babai} for background information.

\begin{proc}[Partition pullback] \label{proc:pullback}
\Input a partition $\Delta_1, \dots, \Delta_t$ of $\Gamma$, a surjection $\iota: \Omega \to \binom{\Gamma}{k}$ \\
\Output a canonical partition in color classes $\Omega_v$ for $v \in \mathcal{C}_t \coloneqq \left\{v \in \{0,\dots,k\}^t \ \middle|\ \sum v_i = k \right\}$ of $\Omega$ such that the following conditions hold:
\begin{enumerate}
	\item The color of $\omega \in \Omega$ is the vector $(\abs{\iota(\omega) \cap \Delta_i} \mid 1 \leq i \leq t)$.
	\item For all color classes $\abs {\Omega_v} \leq (2/3) \abs \Omega$ except one with $v = (0, \dots, 0, k, 0, \dots, 0)$.
\end{enumerate}
\Complexity polynomial in $\abs \Omega$
\end{proc}

In the case $t=2$, this simply means that we obtain a canonical partition $\Omega_0, \dots \Omega_k$ where $\omega \in \Omega_j$ if and only if $\iota(\omega)$ contains $j$ elements of $\Delta_1$ and hence $k-j$ elements of $\Delta_2$. For all $0 \leq j \leq k$ with the possible exception of either $j=0$ or $j=k$, the color classes satisfy $\abs{\Omega_j} \leq (2/3) \abs \Omega$.
The procedure depends on a binomial inequality proven by Babai with elementary means, cf.\@ \cite[Proposition~5.2.3]{babai}.
\begin{lemma} \label{lemma:binom-ineq}
Let $m_1, m_2, t_1, t_2$ be integers. Let $m \coloneqq m_1+m_2$ and $t \coloneqq t_1 +t_2$. Suppose that $t \leq m/2$ and $t_1, t_2 \geq 1$. Then
\[
	\binom{m_1}{t_1} \binom{m_2}{t_2} \leq \frac23 \binom{m}{t}.
\]
\end{lemma}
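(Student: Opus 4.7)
The plan is to start from Vandermonde's identity $\binom{m}{t} = \sum_{i=0}^{t} f(i)$ with $f(i) \coloneqq \binom{m_1}{i}\binom{m_2}{t-i}$ and reformulate the lemma as $f(t_1) \leq \tfrac{2}{3} \sum_i f(i)$, equivalently $\sum_{i \neq t_1} f(i) \geq \tfrac{1}{2} f(t_1)$. Rather than trying to account for all other terms, my strategy is to keep only the two immediate neighbours $f(t_1 - 1)$ and $f(t_1 + 1)$, both well-defined as entries of the Vandermonde sum thanks to $t_1, t_2 \geq 1$, and prove the stronger local statement $f(t_1-1) + f(t_1+1) \geq \tfrac{1}{2} f(t_1)$. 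A direct Pascal computation gives the ratios
\[
	a \coloneqq \frac{f(t_1-1)}{f(t_1)} = \frac{t_1(m_2-t_2)}{(m_1-t_1+1)(t_2+1)}, \qquad b \coloneqq \frac{f(t_1+1)}{f(t_1)} = \frac{(m_1-t_1)\,t_2}{(t_1+1)(m_2-t_2+1)},
\]
so everything reduces to verifying $a + b \geq \tfrac{1}{2}$ under the lemma's hypotheses.

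I would handle this by a case split. In the \emph{interior case} $1 \leq t_1 \leq m_1 - 1$ and $1 \leq t_2 \leq m_2 - 1$, both $a$ and $b$ are strictly positive and AM--GM gives $a + b \geq 2\sqrt{ab}$. A small rearrangement exhibits $ab = g(t_1,\, m_1-t_1)\cdot g(t_2,\, m_2-t_2)$, where $g(x,y) = \frac{xy}{(x+1)(y+1)} = \bigl(1 - \tfrac{1}{x+1}\bigr)\bigl(1 - \tfrac{1}{y+1}\bigr)$ is coordinate-wise increasing on $[1,\infty)^2$. Since all four arguments are at least $1$, one obtains $ab \geq g(1,1)^2 = 1/16$, hence $a + b \geq 1/2$. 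Incidentally, equality holds exactly when $m_1 = m_2 = 2$ and $t_1 = t_2 = 1$, which nicely matches the sharpness of the constant $2/3$ in the lemma.

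The main obstacle is the \emph{boundary case}, since this is where AM--GM collapses (one of $a, b$ vanishes) and, simultaneously, the global hypothesis $t \leq m/2$ must be brought in. Suppose $t_1 = m_1$ (the case $t_2 = m_2$ is symmetric; both cannot hold together, for that would force $t = m$, incompatible with $t \leq m/2$). Then $b = 0$ and everything rests on bounding $a$ from below. The hypothesis $t \leq m/2$ now reads $t_2 \leq (m_2 - m_1)/2$, which yields $m_2 - t_2 \geq m_1 + t_2 \geq t_2 + 1$, and therefore $a = \frac{m_1(m_2 - t_2)}{t_2+1} \geq 1 \geq \tfrac{1}{2}$. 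Combining the interior and boundary cases establishes $a + b \geq \tfrac{1}{2}$ unconditionally, and the lemma follows from the chain $\binom{m}{t} \geq f(t_1-1) + f(t_1) + f(t_1+1) \geq \tfrac{3}{2}\, f(t_1)$.
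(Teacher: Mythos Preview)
Your argument is correct. Note, however, that the paper does not supply its own proof of this lemma; it merely cites \cite[Proposition~5.2.3]{babai} as an elementary result, so there is no in-paper argument to compare against. Your route via Vandermonde's identity, keeping only the two neighbouring terms $f(t_1\pm 1)$ and reducing to $a+b \geq \tfrac12$, is clean: the interior case falls out of AM--GM and the factorisation $ab = g(t_1,m_1-t_1)\,g(t_2,m_2-t_2)$, while the boundary case $t_1 = m_1$ is exactly where the global hypothesis $t \leq m/2$ is needed. One tiny omission worth making explicit is the trivial case $f(t_1)=0$ (i.e.\ $t_1>m_1$ or $t_2>m_2$), which you implicitly assume away when forming the ratios $a,b$; once stated, your case split is exhaustive. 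The observed equality at $m_1=m_2=2$, $t_1=t_2=1$ confirms that the constant $\tfrac23$ is sharp and that discarding all but three Vandermonde terms loses nothing.
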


Having this result at hand, we begin with the procedure: We first color the elements of $\binom{\Gamma}{k}$. Let $\Gamma_v \coloneqq \left\{T \in \binom{\Gamma}{k}\ \middle|\ \abs{T \cap \Delta_i} = v_i\ \forall 1 \leq i \leq t \right\}$ for $v \in \mathcal{C}_t$. In other words, we color the $k$-sized subsets of $\Gamma$ according the number of elements they contain from each of the $\Delta_i$. We verify inductively that indeed $\abs{\Gamma_v} \leq (2/3)\abs{\Gamma}$ for each $v$ with one possible exception. If $t = 1$, there is only one such set and the condition is vacuous. We will study the case $t = 2$ at length since it appears in the application in case~2a of \cref{sec:agg-cert}.

Here, the color classes of $\binom{\Gamma}{k}$ can be indexed more intuitively as $\{\Gamma_0, \dots, \Gamma_k\}$ where $\gamma \in \Gamma_j$ if and only if $\gamma$ contains $k-j$ elements of $\Delta_1$ and thus $j$ elements of $\Delta_2 = \Gamma \setminus \Delta_1$. Without loss of generality, we can assume that $k \leq \abs \Gamma/2$. Hence, by \cref{lemma:binom-ineq}, for $0 < j < k$,
\[
\abs{\Gamma_j} 
= \abs{\left\{ \alpha \mathbin{\dot\cup} \beta\ \middle|\ \alpha \in \binom{\Delta_1}{k-j}, \beta \in \binom{\Delta_2}{j} \right\}}
= \binom{\abs{\Delta_1}}{k-j} \binom{\abs{\Delta_2}}{j}
\leq \frac23 \binom{\abs \Gamma}{k}.
\]
Suppose without loss of generality that $\abs{\Delta_1} \leq \abs \Gamma/2$. Then $\abs{\Gamma_0} = \binom{\abs{\Delta_1}}{k} \leq \binom{\abs\Gamma /2}{k} \leq \frac12 \binom{\abs \Gamma}{k}$. The only possible exception to the bound is thus $\Gamma_k$ corresponding to $v = (0,k)$.

We revert to the original notation and suppose that the claim holds for $t-1 \in \mathbb{N}$. We apply the hypothesis to the partition $\Delta_1, \dots, \Delta_{t-2}, \Delta'$ for $\Delta' \coloneqq \Delta_{t-1} \cup \Delta_t$ obtaining a partition $\Gamma'_v$ for $v \in \mathcal{C}_{t-1}$ satisfying the desired bound with one possible exception. The colors are aware of the number of elements from $\Gamma'$ but cannot tell how many elements are from $\Delta_{t-1}$ and $\Delta_t$. If the exception is not $\Gamma'_{(0, \dots, 0, k)} = \binom{\Delta'}{k}$, we can simply encode the numbers $\abs{T \cap \Delta_{t-1}}$ and $\abs{T \cap \Delta_{t}}$ for $T \in \binom{\Gamma}{k}$ in the colors of $\Gamma'$ not enlarging the color classes. Otherwise we apply the induction hypothesis to the partition $\Delta_{t-1}, \Delta$ obtaining a coloring $\Gamma''_v$ of $\Delta'$ for $v \in \mathcal{C}_2$.  These color classes satisfy $\abs{\Gamma''_v} \leq (2/3) \abs{\Delta'} \leq (2/3) \abs\Gamma$ with one possible exception. We refine the coloring $\Gamma'$ as above using $\Gamma''$ for the sets from $\binom{\Delta'}{k}$. All color classes except one have admissible size and the only exception is the class corresponding to either $\binom{\Delta_{t-1}}{k}$ or $\binom{\Delta_t}{k}$. This finishes the induction.

It remains to infer the coloring of $\Gamma$. For $v \in \mathcal{C}_t$ we set $\Omega_v = \iota^{-1}(\Gamma_v)$. By the properties of $\iota$, $\abs{\Omega_v} = \abs{\Omega} \binom{\abs \Gamma}{k}^{-1} \abs{\Gamma_v} \leq (2/3) \abs \Omega$ for all $v$ except one.
For computing the color classes we need to evaluate $\iota$ for each $\omega \in \Omega$ and compute the cardinalities of the intersection with the $\Delta_i$. This takes polynomial time.

\section{Design Lemma}
The two main mostly combinatorial tools of Babai's algorithm are the Design Lemma \cite[§6]{babai} and the Split-or-Johnson routine \cite[§7]{babai}. Their powers are combined in the following Extended Design Lemma. It is used to descend from a $k$-ary relational structure with large symmetry defect either to a canonical $\alpha$-partition or to a canonically embedded Johnson scheme, cf.\@ \cref{def:relstruct,def:symrelstruct,def:apart,def:johnson-scheme}. That a Johnson scheme $\mathfrak{J}(m,t)$ is non-trivial means in particular that $t \geq 2$.

\begin{proc}[Extended Design Lemma, {\cite[Theorem~7.3.3]{babai}}] \label{proc:design}
\Input a threshold parameter $3/4 \leq \alpha < 1$, a $k$-ary relational structure $\mathfrak{X} = (\Omega, \mathcal{R})$ with relative symmetry defect $> 1-\alpha$, such that $2 \leq k \leq n/4$ for $n = \abs \Omega$.\\
\Output Either
\begin{enumerate}
\item a canonical colored $\alpha$-partition of $\Omega$, or
\item a canonically embedded non-trivial Johnson scheme on a subset $W \subseteq \Omega$ of size $\abs W \geq \alpha n$.
\end{enumerate}
\Complexity multiplicative and additive costs of $n^{O(k + \log n)}$
\end{proc}

The Extended Design Lemma with its various parts is one of the core elements of Babai's algorithm. That its discussion here will be rather brief should not disguise that its justification in \cite{babai} takes more than twenty pages of elaborate arguments. The included case of uniprimitive coherent configurations (UPCC) was the part where Helfgott in 2017 found a mistake invalidating the overall quasi-polynomial bound. The issue has long been resolved by Babai, cf.\@ \cite{helfgottBlog}.

We will give a short overview of the Extended Design Lemma's internal mechanisms and justify the complexity claim following \cite[§5,~p.~41]{helfgott}. The first step is to transform the $k$-ary relational structure $\mathfrak{X}$ into a $k$-ary coherent configuration $\mathfrak{Y} = (\Omega, c : \Omega^k \to \mathcal{C})$, cf.\@ \cref{def:coherent-config}. The algorithm of Weisfeiler-Leman is the method of choice, cf.\@ \cite[§2.5]{helfgott}. In time $k^2 n^{2k+1} \log k \ll n^{O(k)}$ it iteratively refines the color classes of $\mathfrak{X}$ until the resulting structure satisfies the desired regularity conditions. Importantly, the assignment of the coherent configuration to the input structure is canonical. Hence, the algorithm does not incur any multiplicative costs. Note that we simplified the explanation hiding that Weisfeiler-Leman actually takes configurations as an input, cf.\@ \cref{def:config}. In fact, the given relational structure is first refined to a partition structure and then to a configuration. This process is formalized in \cite[§2.3]{helfgott}. The additional costs can be hidden in the implicit constants of the estimates for Weisfeier-Leman. 

After having obtained the coherent configuration $\mathfrak{Y}$, we start a brute force search for tuples of distinct elements $(x_1, \dots, x_l) \in \Omega^l$ for $l \leq k-1$. For each of these tuples we check in $n^{O(1)}$ the following two conditions:
\begin{enumerate}
\item There exists no color $i \in \mathcal{C}$ such that there are $\geq \alpha \abs \Omega$ values of $\gamma \in \Omega$ with $c'(\gamma) = i$ in the partition structure $\mathfrak{Z}'$ defined as \label{item:design1}
\[
	\mathfrak{Z}' \coloneqq \left( \Omega,\ c': \Omega \to \mathcal{C}, \omega \mapsto c(x_1, \dots, x_l, \omega, \dots, \omega)\right).
\]
\item It holds that $l \leq k-2$ and there exists a color $i \in \mathcal{C}$ from $\mathfrak{Z}'$ such that $c'(\gamma) = i$ for $\geq \alpha \abs \Omega$ values of $\gamma \in \Omega$. Let furthermore $C \subseteq \Omega$ denote the color class of $i$. The structure \label{item:design2}
\[
\mathfrak{Z}'' \coloneqq \left(C,\ c'': C \times C \to \mathcal{C}, (\omega_1, \omega_2) \mapsto c(x_1,\dots, x_l, \omega_1, \omega_2, \dots, \omega_2) \right)
\] is not a clique, i.e.\@ it admits at least one non-trivial color class.
\end{enumerate}
That we can find a tuple satisfying at least one of the above conditions is guaranteed by the Design Lemma, cf.\@ \cite[Proposition~5.1]{helfgott}, under the assumption that $\mathfrak{X}$ and hence $\mathfrak{Y}$ have large symmetry defect. Thus, finding a suitable tuple takes $n^{O(k)}$. We would like to utilize the structure $\mathfrak{Z}'$ or respectively $\mathfrak{Z}''$ which is associated to the tuple. Unfortunately, the choice of the tuple is not canonical. We have to individualize the tuple in order to treat the structure afterwards as a canonical feature, cf.\@ \cref{sec:agg-cert} for a detailed explanation of this strategy. The total number of tuples equals the incurred multiplicative costs of $n^{O(k)}$.

We will now work with $\mathfrak{Z}'$ and $\mathfrak{Z}''$. In \cref{item:design1} we have found nothing else than a coloring of $\Omega$ with no color class larger than $\alpha \abs \Omega$. Taking the color classes as blocks we obtain a colored $\alpha$-partition as desired and terminate. If contrarily \cref{item:design2} holds, we use $\mathfrak{Z}''$ as an input for the Split-Or-Johnson routine, cf.\@ \cite[Theorem~3.5]{helfgott}. We observe that $\mathfrak{Z}''$ inherits the property of being a coherent configuration from $\mathfrak{Y}$, cf.\@ \cite[Exercises~2.11,~2.13]{helfgott}. Overtly, $\mathfrak{Z}''$ is classical and by construction homogeneous. In the case that $\mathfrak{Z''}$ is not primitive we obtain by \cite[Exercise~2.16]{helfgott} a colored $1/2$-partition as desired, cf.\@ \cref{def:coherent-config-prop}. We have now ensured that $\mathfrak{Z}''$ is a uniprimitive classical coherent configuration. Hence, we are ready to invoke Split-Or-Johnson. In time $n^{O(1)}$ we find either a colored $\alpha$-partition or a Johnson scheme on a subset $W \subseteq \Omega$ of size $\abs W \geq \alpha \abs \Omega$. Again, these findings are not canonical. The conducted individualization accounts for multiplicative costs of $n^{O(\log n)}$.

\chapter{Overview of the Algorithm}
\label{sec:overview}
\begin{figure}
\centering
\resizebox{\textwidth}{!}{
\begin{tikzpicture}[node distance = 1cm, auto]
\tikzstyle{decision} = [diamond, draw, fill=gray!20, 
    text width=4.5em, text badly centered, node distance=3cm, inner sep=0pt]
\tikzstyle{block} = [rectangle, draw, fill=gray!20, 
    text width=5em, text centered, rounded corners, node distance=3cm, minimum height=4em]
\tikzstyle{line} = [draw, -latex']
\tikzstyle{cloud} = [draw, ellipse,fill=gray!20, node distance=3cm, text width=5em, text badly centered,
    minimum height=2em]
    
\node [block, text width=6cm] (init) {\textbf{Main Procedure} \\ {Input:} $G \leq \Sym(\Omega)$, $\str x, \str y: \Omega \to \Sigma$ \\ {Output:} $\Iso_G(\str x, \str y) = \Aut_G(\str x) \sigma$ or $\emptyset$};
\node [decision, below of=init, yshift=.5cm] (d1) {$G$ small};
\node [block, right of=d1, xshift=3cm] (r7) {Brute Force};
\node [decision, below of=d1] (d2) {$G$ transitive};
\node [block, right of=d2, xshift=3cm] (r8) {Reduction};
\node [decision, below of=d2] (d3) {$G/N$ small};
\node [block, right of=d3, xshift=3cm] (r9) {Reduction};
\node [decision, below of=d3] (d4) {$G$ primitive};
\node [decision, right of=d4] (d6) {$k=1$};
\node [decision, below of=d6] (d5) {\small Large Symmetry};
\node [block, right of=d5] (r10) {Reduction};
\node [block, left of=d4] (cert) {Local Certificates};
\node [block, left of=cert] (r1) {Reduction};
\node [cloud, below of=d4] (r2) {Canonical structure};
\node [block, below of=r2] (design) {Extended Design Lemma};
\node [cloud, below of=design,xshift=3cm] (r4) {Johnson scheme};
\node [cloud, below of=design,xshift=-3cm] (r3) {Colored partition};
\node [block, left of=r3] (r5) {Reduction};
\node [block, right of=r4] (r6) {Reduction};
\node [block, right of=d6] (r11) {Trivial Case};


\path [line] (init) -- (d1);
\path [line] (d1) -- node {no} (d2);
\path [line] (d2) -- node {yes} (d3);
\path [line] (d3) -- node {no} (d4);
\path [line] (d4) -- node {yes} (d6);
\path [line] (d4) -- node {no} (cert);
\path [line] (d5) -- node {no} (r2);
\path [line] (d6) -- node {yes} (r11);
\path [line] (d6) -- node {no} (d5);
\path [line] (design) -- (r3);
\path [line] (design) -- (r4);
\path [line] (cert) -- node {c.~2a} (r1);
\path [line] (cert) -- node [yshift=-.2cm] {cases~2b,~3} (r2);
\path [line] (cert) -- node {case~1} (r3);
\path [line] (r2) -- (design);
\path [line] (r4) -- (r6);
\path [line] (r3) -- (r5);
\path [line] (d1) -- node {yes} (r7);
\path [line] (d2) -- node {no} (r8);
\path [line] (d3) -- node {yes} (r9);
\path [line] (d5) -- node {yes} (r10);

\path [line] (r8) -| +(2,0) |- (init);
\path [line] (r9) -| +(2,0) |- (init);
\path [line] (r10) -| +(2,0) |- (init);
\path [line] (r6) -| +(2,0) |- (init);
\path [line] (r5) -| +(-2,0) |- (init);
\path [line] (r1) -| +(-2,0) |- (init);
\end{tikzpicture}
}
\caption[Overview of the main procedure]{Overview of the main procedure. Some trivial branches have been omitted. \label{fig:overview}}
\end{figure}
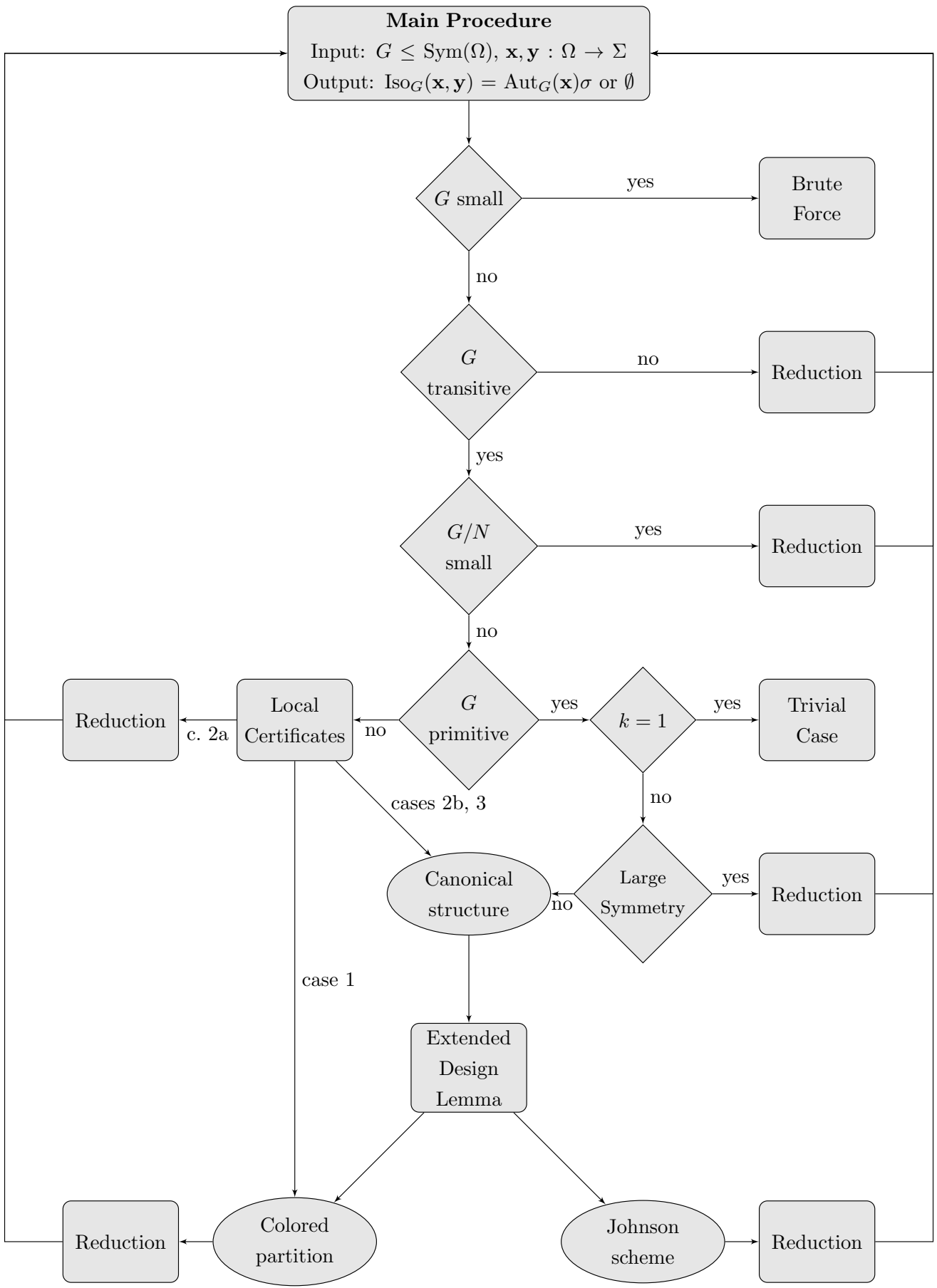

As this thesis focuses on one particular case of Babai's algorithm, it is worth obtaining an impression of the overall algorithm that decides the String Isomorphism Problem. We will refer to it as the \emph{main procedure}. As outlined earlier, it requires three arguments: the strings $\str x, \str y: \Omega \to \Sigma$ and a description of a permutation group $G \leq \Sym(\Omega)$, i.e.\@ a set of its generators. Principally, our result will be either that $\Iso_G(\str x, \str y)$ is empty or that 
\[ \Iso_G(\str x, \str y) = \Aut_G(\str x) \sigma\]
for $\sigma \in \Iso_G(\str x, \str y)$. In the latter case we aim at computing generators for $\Aut_G(\str x)$ and a suitable $\sigma$. The following steps are undertaken to decide the problem. Details beyond the case of imprimitivity will be omitted. For these aspects, the reader is pointed to \cite[§3.1]{helfgott} and \cite[§3.3]{babai}.

\section{First steps}
\label{sec:first-steps}
We can first exclude some trivial cases. If $G \leq \Aut(\str x)$, then $\Iso_G(\str x, \str y) = G$ if $\str x = \str y$ or empty if not. We recall that $G$ is without loss of generality described by polynomially many generators, cf.\@ \cref{sec:schreier-sims}. Thus the condition is testable in polynomial time. If furthermore $\abs{G} < C$ for some absolute constant $C$, then we compute $\Iso_G(\str x, \str y)$ with brute force in constant time. Many theorems require some absolute lower bounds on $\abs G$. These restrictions should not bother us because we are always interested in asymptotic behaviors only. Sufficiently small subproblems will be solved brutally.

If $G$ acts intransitively on $\Omega$, strong Luks reduction can be used to recur on the orbits, that is on shorter strings, cf.\@ \cref{proc:strong-luks}.
Now suppose that $G$ is transitive. We aim at passing to a primitive action which allows us to use a group theoretic result by Cameron-Maróti. Using polynomial time algorithms described in \cref{sec:orbitsblocks} we compute a minimal system of blocks $\mathcal B$. Let $n = \abs{\mathcal B}$. The induced action on the blocks presents itself as an epimorphism $G \to G'$ where $G' \leq \Sym(\mathcal B)$. Let $N$ denote the kernel of this map. $N$ stabilizes the blocks, i.e.\@ $B^g = B$ for each $B \in \mathcal B$ and all $g \in N$. The group $G/N \cong G'$ acts primitively on $\mathcal B$. Under these circumstances, Cameron-Maróti implies one of the three following cases:

\begin{theorem}[Cameron-Maróti, {\cite[Theorem~1.1]{maroti}, \cite[Theorem~3.2.1]{babai}}] \label{thm:cameron}
	Let $G' \leq \Sym_n$ be a primitive permutation group. Then one of the following holds:
	\begin{enumerate}
		\item $G'$ is a \emph{Cameron group}. That is, there exist $m, k, r \in \mathbb{N}$ such that $n = \binom{m}{k}^r$. The group $G'$ is a subgroup of $\Sym_m^{(k)} \wr \Sym_r$ with the primitive product action\footnote{See \cite[§§2.6, 2.7]{dixon} for definitions of the wreath product and its primitive action.} on $\binom{\Gamma}{k}^r$ for $\Gamma = \{1,\dots, m\}$. $G'$ admits the normal subgroup $\left(\Alt_m^{(k)}\right)^r$. Furthermore, $\gindex{G'}{\left(\Alt_m^{(k)}\right)^r} \leq n$. \label{item:cameron1}
		\item $G'$ is a Mathieu group with $n < 25$. \label{item:cameron2}
		\item $\abs{G'} < n^{1+\log_2 n}$. \label{item:cameron3}
	\end{enumerate}
\end{theorem}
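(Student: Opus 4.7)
The plan is to apply the O'Nan--Scott theorem, which classifies primitive subgroups $G' \leq \Sym_n$ according to the structure of their socle $\mathrm{soc}(G') = T^r$, a direct product of $r$ copies of a simple group $T$. The idea is that for most O'Nan--Scott types one obtains a uniform order bound of the form $|G'| < n^{1+\log_2 n}$, so that the structurally rich exceptions coincide precisely with the Cameron groups of case~1 and the Mathieu groups of case~2.

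First I would dispatch the affine case ($T$ elementary abelian of order $p^d$, so $n = p^d$): here $G' \leq \mathrm{AGL}_d(p)$, whence $|G'| \leq n \cdot |\mathrm{GL}_d(p)| \leq n \cdot n^{\log_2 n}$ by an elementary matrix count. Next, for the almost simple case ($T$ non-abelian simple, $T \leq G' \leq \mathrm{Aut}(T)$), I would invoke the classification of primitive actions of finite simple groups to get $|G'| < n^{1+\log_2 n}$ outside two families of exceptions: $T = \Alt_m$ acting on $k$-subsets, which is the $r=1$ instance of case~1, and the Mathieu groups in their natural degrees $n < 25$, which is case~2. The residual O'Nan--Scott types (simple and compound diagonal, twisted wreath) admit direct estimates of the form $|G'| \leq r! \cdot |T|^r$ against lower bounds $n \geq |T|^{r-1}$, again yielding case~3.

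The most delicate step is the product action type, where $\mathrm{soc}(G') = T^r$ with $T$ non-abelian simple acts on $\Delta^r$ of size $n = |\Delta|^r$, so $G' \leq H \wr \Sym_r$ for some primitive almost simple $H \leq \Sym(\Delta)$. By a recursive application of the almost simple case either $|H| < |\Delta|^{1+\log_2 |\Delta|}$, in which case a short computation (combining this with $|\Sym_r| \leq |\Delta|^{r \log_2 r}$) propagates the bound to $G'$, or $H$ is of Johnson type with $\Alt_m^{(k)} \trianglelefteq H \leq \Sym_m^{(k)}$ and $|\Delta| = \binom{m}{k}$. In the latter subcase $G'$ sits inside $\Sym_m^{(k)} \wr \Sym_r$ with $n = \binom{m}{k}^r$ and has a normal subgroup $\left(\Alt_m^{(k)}\right)^r$ as required. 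The index bound $\gindex{G'}{\left(\Alt_m^{(k)}\right)^r} \leq n$ then follows from noting that the quotient embeds into $\mathrm{Out}(\Alt_m)^r \rtimes \Sym_r$, which for $m \geq 5$ has order at most $2^r \cdot r! \leq \binom{m}{k}^r = n$.

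The main obstacle is that this architecture rests entirely on the Classification of Finite Simple Groups: the O'Nan--Scott decomposition depends on CFSG in the twisted wreath case, the Liebeck-type bounds on primitive degrees are CFSG-consequences, and verifying that only the Mathieu groups survive in small degree is itself a CFSG-intensive enumeration. A CFSG-free argument would be far out of reach; within the scope of this thesis the Cameron--Maróti theorem is invoked as a black box, and only the concrete structural data in case~1 (the embedding into $\Sym_m^{(k)} \wr \Sym_r$ and the index bound) will actually be exploited in the sequel.
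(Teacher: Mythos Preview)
The paper does not prove this theorem at all: it is quoted as an external result from \cite{maroti} and \cite{babai} and used as a black box, exactly as you yourself note in your final paragraph. There is therefore nothing in the paper to compare your argument against.

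That said, your sketch is a faithful outline of how Mar\'oti's proof actually proceeds: O'Nan--Scott case analysis, with the affine, diagonal and twisted-wreath types yielding the order bound directly, the almost simple type handled via CFSG-based degree bounds leaving only $\Alt_m$ on $k$-subsets and the Mathieu groups, and the product-action type producing the general Cameron groups by recursion on the base component. One small slip: your index computation claims $|\mathrm{Out}(\Alt_m)| = 2$ for all $m \geq 5$, but $|\mathrm{Out}(\Alt_6)| = 4$, so the bound $2^r \cdot r!$ needs a separate check at $m=6$; Mar\'oti handles the index bound $\leq n$ by a slightly more careful estimate. This is a detail rather than a structural gap, and your concluding assessment --- that a self-contained proof is out of scope here and that only the structural data from case~1 is actually used downstream --- is exactly right.
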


As explained above, we are not interested in \cref{item:cameron2}. For distinguishing \cref{item:cameron1,item:cameron3} we compute $\gindex{G}{N} = \abs{G'}$ in polynomial time using Schreier-Sims. In case of \cref{item:cameron3} we perform strong Luks reduction to reduce from $G$ to $N$, cf.\@ \cref{proc:strong-luks}. We obtain $n \cdot n^{1+\log_2 n}$ subproblems of length $\leq \abs{\Omega}/n$.

In the last case, we find a subgroup with admissibly small index and well understood action: We know that $G/N$ contains a normal subgroup $M$ which acts on $\mathcal B$ as $\left( \Alt_m^{(k)} \right)^r$. $M$ must be understood as contained in the base $\left(\Sym_m^{(k)} \right)^r$ of the wreath product. Hence, it acts on $\binom{\Gamma}{k}^r$ as $a^\sigma(i) = a(i)^{\sigma(i)}$ for $a \in \binom{\Gamma}{k}^r, 1 \leq i \leq r, \sigma \in M$, where we understand $M$ as the group of all maps $\{1,\dots,r\} \to \Alt_m^{(k)}$ with pointwise multiplication. Clearly, this action admits a coarser system $\mathcal B'$ of $\binom{m}{k}$ blocks of size $\binom{m}{k}^{r-1}$, i.e.\@ those that arise when fixing the first entry in the tuples of $\binom{\Gamma}{k}^r$. On these blocks, $M$ acts as Johnson group $\Alt_m^{(k)}$ inducing a homomorphism $M \to \Alt(\Gamma)$.

We have to transform this abstract structural knowledge into tangible information. \cite{bls} provides us with the tools to map out the structure of the Cameron group $G/N$ and the blocks of its action. This takes polynomial time. Let $\pi: G \to G/N$ denote the projection of $G$ onto $G/N$ and set $M' = \pi^{-1}(M)$. Then $M'$ acts on $\Omega$ admitting the blocks $\mathcal B'$ on which it acts as Johnson group. Let $N' \leq M'$ denote the stabilizer of $\mathcal{B}'$.

We apply \cref{proc:iota} with input $M'/N' \leq \Sym(\mathcal{B}')$ and $m$, $k$, obtaining a bijection $\iota': \mathcal{B}' \to \binom{\Gamma'}{k}$ and an isomorphism $\varphi: M'/N' \to \Alt(\Gamma')$ where $\Gamma'$ is a set of $m$ elements constructed by that procedure. $\iota'$ can be naturally extended to a surjection $\iota: \Omega \to \binom{\Gamma'}{k}$ with the property that each $\gamma \in \binom{\Gamma'}{k}$ has precisely $\abs \Omega \binom{m}{k}^{-1}$ preimages.

Using weak Luks reduction we pass from $G$ to $M'$. The multiplicative costs are linear since $\gindex{G}{M'} = \gindex{G}{\pi^{-1}(M)} = \frac{\abs G}{\abs M \abs{\ker \pi}} = \gindex{G/N}{M} \leq n$. Call $M'$ from now on $G$ and let $\Gamma \leftarrow \Gamma'$.
We then hold an epimorphism $\varphi: G \to \Alt(\Gamma)$. The stabilizer of the blocks in $\mathcal{B}'$ is precisely the kernel of the map $\varphi$. Thus, the epimorphism $\varphi$ is injective if and only if the action of $G$ on $\Omega$ is primitive. The case of $\varphi$ having a non-trivial kernel is therefore called the \emph{imprimitive case}. It is the main subject of this thesis.

We rename $N \leftarrow \ker \varphi = N'$, obtaining $\varphi: G/N \to \Alt(\Gamma)$, an isomorphism.
Moreover, we can exclude the case $\abs \Gamma \ll \log \abs \Omega$. If this holds, we have $\gindex{G}{N} = \frac12\abs\Gamma! < \abs\Gamma^{\abs\Gamma} \ll \abs\Omega^{O(\log \abs\Omega)}$ and can afford a strong Luks reduction from $G$ to $N$. Since $N$ is a stabilizer of $\binom{m}{k}$ blocks, we obtain $n \abs{\Omega}^{O(\log \abs\Omega)} \ll \abs{\Omega}^{O(\log \abs\Omega)}$ subproblems for strings of length $\leq \abs\Omega / \binom{m}{k} \leq \abs\Omega/2$.

Let us summarize the setting which will occupy us for the rest of this thesis: Being given two strings $\str x, \str y: \Omega \to \Sigma$ as inputs, we have reduced the problem to a group $G \leq \Sym(\Omega)$ for which we hold a description of an epimorphism $\varphi: G \to \Alt(\Gamma)$. During the reduction the original input strings have been manipulated, in particular shifted by permutations. We will refer to the updated strings as $\str x$ and $\str y$. The set $\Gamma$ is explicitly known, we even possess a surjection $\iota: \Omega \to \binom{\Gamma}{k}$ respecting $\varphi$. We can suppose that $\abs \Gamma \gg \log \abs \Omega$ and that $G$ acts transitively on $\Omega$.

\section{The primitive case and the case of large symmetry}
\label{sec:primitive}
The subject of this thesis is the imprimitive case which is treated using local certificates. In this case, $G$ acts imprimitively on $\Omega$, i.e.\@ admitting non-trivial blocks. Before looking at this case, we want to give an overview of the other parts of the algorithm, in particular of the primitive case. $G$ acts primitively on $\Omega$, $\varphi: G \to \Alt(\Gamma)$ is an isomorphism and $\iota: \Omega \to \binom{\Gamma}{k}$ is a bijection. 

If $k = 1$, we are in the very comfortable situation in which $G = \Alt(\Omega) = \Alt(\Gamma)$. We can compute $\Iso_G(\str x, \str y)$ rapidly. The two strings are isomorphic if and only if their letters occur with the same multiplicities. This can be verified in polynomial time. In order to describe $\Aut_G(\str x)$, we color the letters in $\Sigma$ according to their multiplicities in $\str x$. $\Aut_G(\str x)$ consists then of all permutations that swap letters of the same color. We can now describe $\Aut_G(\str x)$ in polynomial time. The strategy is described in the following example. Finally, we require a permutation $\sigma \in \Alt(\Omega)$ mapping $\str x$ to $\str y$. We obtain $\sigma$ by computing the same coloring on $\str y$ and imposing the additional condition that the permutation must belong to $\Alt(\Omega)$, as well in polynomial time. Our result is then
\[
	\Iso_G(\str x, \str y) = \Aut_G(\str x) \sigma.
\]
\begin{example}
We want to construct $\Aut_G(\str x)$ explicitly for $\str x = \mathsf{hippopotomonstrosesquippedaliophobia}$, i.e.\@ $\Omega = \{1,\dots, 36\}$, $\Sigma=\{\mathsf a,\dots,\mathsf z\}$. The coloring is induced by the multiplicities:
\begin{center}
\begin{tabular}{l|cccccc}
color/multiplicity & 7 & 6 & 4 & 3 & 2 & 1 \\ \hline
letters & \textsf{o} & \textsf{p} & \textsf{i} & \textsf{s} & \textsf{a, e, h, t} & \textsf{b, d, l, m, n, q, r, u}
\end{tabular}
\end{center}
For the colors 7, 6, 4, 3 and 1 we obtain rather simple contributions to $\Aut_G(\str x)$ because here no blocks need to be considered. For example for color 7, we add generators for $\Sym(\{5, 7, 9, 11, 16, 30, 33\})$. Color class 2 is more complicated. On the upper level, the four letters can be permuted while on the lower level the positions carrying the same letter can be swapped. The contribution of this color class is therefore isomorphic to $\Sym_2 \wr \Sym_4$. We add permutations for the lower level, e.g.\@ $(27\ 36)$ for $\mathsf{a}$, and permutations for the upper level, e.g.\@ $(18\ 27)(25\ 36)$ corresponding to a swap of $\mathsf{a}$ and $\mathsf{e}$. After we have computed generators for all these building blocks, we may have added odd permutations. However, we can apply Schreier-Sims, cf.\@ \cref{sec:schreier-sims}, to make sure that the resulting group is indeed a subgroup of $\Alt(\Omega)$.
\end{example}

Before considering the more complex case, we note that $k \leq \log_2 \abs\Omega$. This follows from $\abs\Omega = \binom{\abs\Gamma}{k} \geq (\abs\Gamma/k)^k \geq 2^k$, since by construction $1\leq k \leq \abs\Gamma/2$.

Let us now look at the case $k > 1$ and $G$ primitive. We want to rule out the case of large symmetry since it cannot be treated by the Design Lemma, cf.\@ \cref{proc:design}. The Design Lemma processes $k$-ary relational structures with little symmetry (or large symmetry defect), cf.\@ \cref{def:relstruct,def:symrelstruct}. We want to build such structures, one for each of the input strings. Let
\[
	\mathfrak{X}(\str x) \coloneqq \left(\Gamma, (R_\alpha)_{\alpha \in \Sigma}\right), \quad 
	R_\alpha \coloneqq \left\{ (\gamma_1, \dots, \gamma_k) \in \Gamma^k\ \middle|\ \str x\left(\iota^{-1}\left( \{\gamma_1, \dots, \gamma_k\}\right)\right) = \alpha\right\} \text{ for } \alpha \in \Sigma
\]
and $\mathfrak{X}(\str y)$ respectively. Both are $k$-ary partition structures. An edge in $\Gamma^k$ carries a color corresponding to the letter at the position in the string which is associated to the edge in virtue of $\iota$. This definition is possible since $\iota: \Omega \to \binom{\Gamma}{k}$ is a bijection. The structures canonically depend on the two strings. Hence, whenever we witness that $\mathfrak{X}(\str x)$ and $\mathfrak{X}(\str y)$ have differing canonical properties, we can refute isomorphicity. We can map out the two structures in $\abs\Gamma^k \ll \abs\Omega^{O(\log \abs\Omega)}$ many steps.

In order to distinguish cases, we have to compute the twin classes of the relational structures. Two elements $\gamma_1, \gamma_2 \in \Gamma$ are twins in $\mathfrak{X}(\str x)$, if the transposition $(\gamma_1\ \gamma_2) \in \Aut(\mathfrak{X}(\str x))$. This equivalence relation can be written alternatively\footnote{We cannot pull out $(\gamma_1\ \gamma_2)$ of $\iota$ using \cref{eq:permiso} because this transposition is clearly not contained in $G^\varphi = \Alt(\Gamma)$.} as
\[
	\gamma_1 \sim_{\str x} \gamma_2 \iff \forall j \in \binom{\Gamma}{k}:\ \str x\left(\iota^{-1}(j)\right) = \str x \left(  \iota^{-1}\left(j^{(\gamma_1\ \gamma_2)}\right) \right),
\]
Here, the transposition acts naturally on sets, i.e.\@ $(\gamma_1\ \gamma_2) \in \Sym^{(k)}(\Gamma)$. The condition can be verified for all pairs $\gamma_1,\gamma_2$ in polynomial time. Since there are $\binom{\abs\Gamma}{2} \ll \abs\Omega^2$ such pairs, we obtain the entire twin class structure in polynomial time. Having computed the twin classes, we can easily determine the symmetry defect. Let $T \subseteq \Gamma$ such that $\Gamma \setminus T$ is a largest twin class by cardinality. The (relative) symmetry defect is defined as $\abs{T}/\abs{\Gamma}$. Again, the twin classes of $\mathfrak{X}(\str x)$ and $\mathfrak{X}(\str y)$ may not be identical as sets. However, if they differ in size we can refute isomorphicity.

Suppose that we are in the case of large symmetry, i.e.\@ both structures have symmetry defect $\leq 1/2$. Let $C_{\str x}$ and $C_{\str y}$ denote the large twin classes satisfying $\abs{C_{\str x}} = \abs{C_{\str y}} > \abs\Gamma/2$. Due to their size they are unique and thus canonical. We can without loss of generality assume that $\abs{C_{\str x}} = \abs{C_{\str y}}$. 

The next step is to \emph{align} the two dominating twin classes. This strategy will be used often throughout the entire algorithm. 
By enumerating both classes, we can construct a $\sigma \in \Alt(\Gamma)$. We lift this permutation along $\varphi$ to a $\tau = \varphi^{-1}(\sigma) \in G$ using Schreier-Sims. Replacing $\str y$ by $\str y^\tau$ is what Babai calls alignment, cf.\@ \cite[§14.1]{babai}. Shifting $\str y $ to $\str y^\tau$ does not curtail our capability of determining $\Iso_G(\str x, \str y)$, given that by \cref{lemma:isoaut},
\[\Iso_G(\str x, \str y) = \Iso_{G\tau^{-1}}(\str x, \str y) = \Iso_G\left( \str x, \str y^\tau \right) \tau^{-1}.\]

After having completed the alignment, we can assume that $C \coloneqq C_{\str x} = C_{\str y}$. Applying \cref{proc:pullback}, we let the partition $\Gamma = (\Gamma \setminus C) \mathbin{\dot\cup} C$ induce a partition $\Omega_0 \mathbin{\dot\cup} \cdots \mathbin{\dot\cup} \Omega_k$ of $\Omega$. The resulting partition satisfies $\abs{\Omega_i} \leq \abs \Omega/2$ for $0 < i \leq k$ since $\abs{\Omega_0} = \abs{\iota^{-1} \binom{C}{k}} > \abs\Omega/2$. Every isomorphism $\sigma$ from $\str x$ to $\str y$ must preserve the former partition, i.e.\@ $C^{\varphi(\sigma)} = C$. Hence, we have reduced the problem such that $\Iso_G(\str x, \str y) = \Iso_H(\str x, \str y)$ for $H = \varphi^{-1}(\Alt(\Gamma)_C)$.

We can easily compute $H$ despite that it is the preimage of a setwise stabilizer. The computation merely involves taking the preimage of five generators applying Schreier-Sims: By \cref{eq:generators-alt}, $\Alt(C)$ and $\Alt(\Gamma \setminus C)$ are generated each by two elements, thus $\Alt(C) \times \Alt(\Gamma \setminus C)$ requires four generators. The fifth is of the form $\tau \coloneqq (a_1\ a_2)(b_1\ b_2)$ where $a_1,a_2 \in C$ and $b_1,b_2 \in \Gamma \setminus C$ and adds elements that are products of two odd permutations taken from the two alternating groups. We can pass to an even smaller group, namely $H' = \varphi^{-1}(\Alt(C) \times \Alt(\Gamma \setminus C))$. Here, we must, of course, read $\Alt(C) \times \Alt(\Gamma \setminus C)$ as a subgroup of $\Alt(\Gamma)$. The quotient $H/H'$ contains apart from the identity only $H'\tau$. Applying weak Luks reduction, cf.\@ \cref{proc:weak-luks}, the task reduces to the two problems of determining $\Iso_{H'}(\str x, \str y)$ and $\Iso_{H'}\left(\str x, \str y^{\tau^{-1}}\right)$. Even in the latter case $C$ remains a twin class because $\tau$ leaves this set invariant. We treat both instances of the subproblem similarly. Let $\str y$ denote the second string in any case.

Since $C$ is a twin class for $\str x$ and $\str y$, every permutation in $\varphi^{-1}(\Alt(C))$ leaves $\str x$ and $\str y$ unchanged. By construction, $C$ corresponds to $\Omega_0$. Hence, $\Iso_{H'}(\str x, \str y)$ is empty if $\restrict{\str x}{\Omega_0} \neq \restrict{\str y}{\Omega_0}$. Let us thus suppose the contrary. In this case, $\Iso_{H'}(\str x, \str y) = \Iso_{\restrict{H'}{\Omega'}}\left(\restrict{\str x}{\Omega'}, \restrict{\str y}{\Omega'}\right)$ for $\Omega' = \Omega \setminus \Omega_0$ holds. We have reduced the problem significantly. $\restrict{H'}{\Omega'}$ acts on $\Omega'$. \Cref{proc:pullback} ensures that $\Omega_j$ is the set of elements $\omega \in \Omega$ with the property that $\abs{\iota(\omega) \cap C} = k-j$. This value is an invariant under the action of $\restrict{H'}{\Omega'}$ because the $\varphi$-image of this group fixes $C$. Therefore, the orbits of $\restrict{H'}{\Omega'}$ acting on $\Omega'$ must refine the partition $\Omega' = \Omega_1 \mathbin{\dot\cup} \cdots \mathbin{\dot\cup} \Omega_k$. Each orbit is hence of length $\leq \abs\Omega/2$. We recur on the orbits by applying strong Luks reduction, cf.\@ \cref{proc:strong-luks}. This yields $k \ll \log\abs\Omega$ subproblems for strings of length $\leq \abs\Omega/2$ and total length $\leq \abs\Omega$.

We have treated the case of large symmetry. Let us now assume that $\mathfrak{X}(\str x)$ and $\mathfrak{X}(\str y)$ have symmetry defect $> 1/2$. On both structures we apply the Design Lemma, cf.\@ \cref{proc:design}, obtaining canonically colored partitions or canonically embedded non-trivial Johnson schemes. We have prepared everything to finish the reduction with \cref{sec:align}.

\chapter{Local certificates}
\label{sec:localcert}
We will now discuss the crucial part of Babai's algorithm. The situation is the same as in the previous section. We possess an epimorphism $\varphi: G \to \Alt(\Gamma)$ and a surjection $\iota: \Omega \to \Sym(\Omega)$. We can suppose that $G$ acts imprimitvely.
In order to exploit the correspondence between $\Omega$ and $\Gamma$ we introduce the following slightly abusive notions:
\begin{notation} \label{not:phi-stabilizer}
For a homomorphism $\varphi: G \to \Alt(\Gamma)$ and a set $T \subseteq \Gamma$ we define the stabilizers
$$G_T = \left\{ g \in G\ \middle|\ T^{\varphi(g)} = T \right\}, \quad G_{(T)} = \left\{ g \in G\ \middle|\ \forall t \in T: t^{\varphi(g)} = t \right\}.$$
\end{notation}

The strategy is to turn local information into knowledge about the global structure of the set of isomorphisms $\Iso_G(\str x, \str y)$. The procedure works as follows: First, we look only at one of the strings, say $\str{x}$, and try to encase its automorphism group $\Aut_G(\str{x})$ from above and from below. Approximating it from above involves finding canonical structures in the string which must be preserved by any automorphism, while for bounding the group from below constructing explicit automorphisms will be crucial. We attack this problem locally. For moderately sized test sets $T \subseteq \Gamma$ we verify either that the image under $\varphi$ of the stabilized automorphism group $\Aut_{G_T}(\str{x})$ contains $\Alt(T)$ or that this image is contained in a proper subgroup of $\Alt(T)$. The first outcome will be called \emph{certificate of fullness} while we refer to the latter as a \emph{certificate of non-fullness}. After collecting certificates for all $T \subseteq \Gamma$ of a certain size, this information is aggregated into knowledge about the global structure of $\Aut_G(\str{x})$. Now, we proceed similarly with the second string $\str{y}$. \Cref{sec:localcert1} is about the construction of local certificates. Comparing local certificates will be the subject of \cref{sec:localcert2}. In \cref{sec:agg-cert} we will explain how these tools allow us to detect canonical structures which are the basis for an efficient reduction of the problem described in \cref{sec:align}.

Formally, the two types of certificates we wish for are defined as follows:

\begin{defn}
	Let $\str x: \Omega \to \Sigma$, $G \leq \Sym(\Omega)$, $\varphi:G \to \Sym(\Gamma)$. Let $T \subseteq \Gamma$.
	\begin{enumerate}
		\item A \emph{certificate of non-fullness} for $T$ is a tuple $(W, M(T))$ where $W \subseteq \Omega$ the window, $ M(T) \leq \Sym(T)$, $M(T) \neq \Alt(T)$ and $\restrict{\varphi\left(\Aut_{G_T}^W(\mathbf{x})\right)}{T} \leq M(T)$.%
		\footnote{For $T \subseteq \Gamma$ and $ H \leq G_T$, the set of permutations in the image of $H$ is denoted as $\restrict{\varphi(H)}{T} = \left\{ \restrict{\varphi(h)}{T} \mid h \in H \right\}$.}
		\item A \emph{certificate of fullness} for $T$ is group $K(T) \leq \Aut_{G_T}(\mathbf{x})$ and $\left. \varphi(K(T))\right|_T = \Alt(T)$.
	\end{enumerate}
\end{defn}

Note that the type of certificate which $T$ admits depends on the group $G$ and the string $\str x$. Thus, when considering different strings and the same test set $T$ we may or may not arrive at the same type of certificate.
The window measures how much of the input is considered. The strategy will be to iteratively enlarge the window by looking at specific elements in the domain $\Omega$:

\begin{defn}
	Let $G \leq \Sym(\Omega)$, $\varphi:G \to \Sym(\Gamma)$. An element $x \in \Omega$ is called \emph{affected} with respect to $\varphi$ and a subgroup $H \leq G$ if $\varphi(H_x)$ does not contain $\Alt(\Gamma)$. Given a tuple $(H, \varphi)$, $\Aff(H, \varphi) \subseteq \Omega$ denotes the set of all elements in $\Omega$ affected by $(H, \varphi)$.\footnote{This definition is consistent with \cite[Definition~4.2]{helfgott} and adopted from \cite[Definition~10.1.4]{babai}.}
\end{defn}

We quote one of Babai's results which justifies the correctness of the subsequent algorithm as proven in \cite[Theorem~8.3.5, Corollary~8.3.7]{babai}, cf.\@ \cite[Proposition~4.4]{helfgott}.
\begin{theorem} Let $G \leq \Sym(\Omega)$, $\varphi: G \to \Alt_k$ an epimorphism and $U = \Omega \setminus \Aff(G, \varphi)$ the set of unaffected elements. Then the following hold: \label{thm:afforb}
	\begin{enumerate}
		\item \emph{(Unaffected Stabilizer Theorem)} Suppose $k > \max\left\{8, 2+\log_2 n_0\right\}$ where $n_0$ is the length of the largest $G$-orbit. Then $\left(G_{(U)}\right)^\varphi = \Alt_k$.
		\item \emph{(Affected Orbit Lemma)} Assume $k > 5$. If $\Delta$ is a $G$-orbit containing some affected elements, then each orbit of $ker \varphi$ contained in $\Delta$ is of length $\leq \abs \Delta/k$.
	\end{enumerate}
\end{theorem}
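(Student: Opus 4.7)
For the \emph{Affected Orbit Lemma}, my plan is to lean on the simplicity of $\Alt_k$ for $k \geq 5$, which forces every proper subgroup of $\Alt_k$ to have index at least $k$; the minimum $k$ is attained by the point stabiliser $\Alt_{k-1}$, and any smaller index would yield a homomorphism from $\Alt_k$ into a strictly smaller symmetric group, violating either simplicity or a cardinality count. Given an affected $x$ in the $G$-orbit $\Delta$, the subgroup $\varphi(G_x)$ is by definition proper in $\Alt_k$, so $[\Alt_k : \varphi(G_x)] \geq k$. A short computation with double cosets (using $\ker\varphi \triangleleft G$) identifies the number of $\ker\varphi$-orbits inside $\Delta$ with $[G : G_x \cdot \ker\varphi] = [\Alt_k : \varphi(G_x)] \geq k$; as $\ker\varphi$ is normal, these orbits all have equal cardinality, so each has size at most $|\Delta|/k$.

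The \emph{Unaffected Stabiliser Theorem} requires more work. First, $G_{(U)}$ is the kernel of the restriction map $G \to \Sym(U)$ and hence normal in $G$, so $\varphi(G_{(U)})$ is a normal subgroup of $\varphi(G) = \Alt_k$; by simplicity (using $k > 8$) it is either $\{1\}$ or all of $\Alt_k$. I would assume for contradiction that it is trivial, i.e.\@ $G_{(U)} \leq \ker\varphi$. Then the quotient $\pi(G) = G/G_{(U)}$ surjects onto $\Alt_k$ while embedding in $\prod_i \Sym(\Delta_i)$ over the $G$-orbits $\Delta_i \subseteq U$. Since $\Alt_k$ is simple, it must appear as a composition factor of some transitive projection $T_i := \pi_{\Delta_i}(G) \leq \Sym(\Delta_i)$, with $|\Delta_i| \leq n_0 < 2^{k-2}$.

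To close, I would exploit the unaffectedness of $\Delta_i$. For $x \in \Delta_i$ we have $\varphi(G_x) = \Alt_k = \varphi(G)$, whence $G = G_x \cdot \ker\varphi$, so $\ker\varphi$ is transitive on $\Delta_i$ and its image $M_i := \pi_{\Delta_i}(\ker\varphi)$ is a transitive normal subgroup of $T_i$. The stabiliser $(T_i)_x$ then maps onto $T_i/M_i$, and a Goursat-style analysis across the orbits shows that some such quotient inherits $\Alt_k$ as a quotient. Iterating this stabiliser reduction on an orbit where $\Alt_k$ survives, and invoking a classical Jordan-style bound on the minimum degree of a transitive permutation group carrying a transitive normal subgroup with $\Alt_k$ as quotient, should force $|\Delta_i| \geq 2^{k-2}$ -- the desired contradiction.

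The delicate step is precisely this final quantitative lower bound. Merely locating $\Alt_k$ as a composition factor is insufficient, because $\Sym_k$ itself is a transitive group on $k < 2^{k-2}$ points with $\Alt_k$ as composition factor. The exponential bound must be squeezed out of the stronger structural property that $M_i$ is both transitive and normal in $T_i$ with quotient closely related to $\Alt_k$, which prevents the $\Alt_k$ factor from being absorbed by a small faithful action. Controlling the iterated-orbit argument rigorously -- in particular guaranteeing at each recursive step that one can isolate an orbit on which $\Alt_k$ persists as a quotient rather than merely as a composition factor -- is where I expect the bulk of the technical effort to lie.
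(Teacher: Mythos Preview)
The paper does not prove this theorem at all: it is quoted as a black box from Babai (with Helfgott as secondary reference), accompanied by the remark that the proof was simplified by P\'alfy and \emph{depends on the Classification of Finite Simple Groups}, with a CFSG-free version by Pyber available only for larger $k$. So there is no ``paper's own proof'' to compare against beyond that attribution.

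Your argument for the Affected Orbit Lemma is correct and complete. The identification of the number of $\ker\varphi$-orbits in $\Delta$ with $[G:G_x\ker\varphi]=[\Alt_k:\varphi(G_x)]$ is exactly right, and the minimal-index fact for proper subgroups of $\Alt_k$ (valid for $k\geq 5$) finishes it. This is essentially the standard elementary proof and matches what one finds in Babai's paper for this half.

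For the Unaffected Stabiliser Theorem your opening reductions are the standard ones: normality of $G_{(U)}$, the trivial/$\Alt_k$ dichotomy, passage to a transitive projection on an unaffected orbit, and transitivity of the image of $\ker\varphi$ there. The gap you yourself flag, however, is genuine and is not closable by an elementary ``Jordan-style bound''. In Babai's argument the crux is a lemma (his Main Structure Theorem for the situation, ultimately resting on the O'Nan--Scott description of primitive groups together with Schreier's conjecture that outer automorphism groups of finite simple groups are solvable, hence CFSG) which controls primitive groups admitting $\Alt_k$ as a quotient by a transitive normal subgroup. Without that input one cannot rule out, at each stage of your iterated-orbit descent, a small primitive quotient absorbing the $\Alt_k$ factor; classical Jordan-type minimum-degree bounds speak to elements with small support, not to the degree forced by an $\Alt_k$ quotient over a transitive normal subgroup. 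So your sketch correctly isolates where the difficulty lies, but the missing ingredient is precisely the CFSG-dependent structural lemma that the paper alludes to rather than an elementary estimate.
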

The Unaffected Stabilizer Theorem and the Affected Orbit Lemma are results genuinely originating in the theory of permutation groups. The proof was simplified by P.~P.~Pálfy, according to \cite{babai}, and depends on the Classification of Finite Simple Group (CFSG), although Pyber \cite{pyber} proved the claims CFSG-independently for larger $k$. In contrary to \cref{proc:design}, their justification is rather short but far from being trivial. The assumption $k > \max\left\{8, 2+\log_2 n_0\right\}$ is tight as Babai outlines in \cite[Remark~8.2.5]{babai}.

\begin{figure}
\centering
\begin{subfigure}{.45\textwidth}
\resizebox{\textwidth}{!}{
\begin{tikzpicture}
\node[inner sep=0pt, anchor=north west] (russell) at (0,0) {\includegraphics[width=11cm]{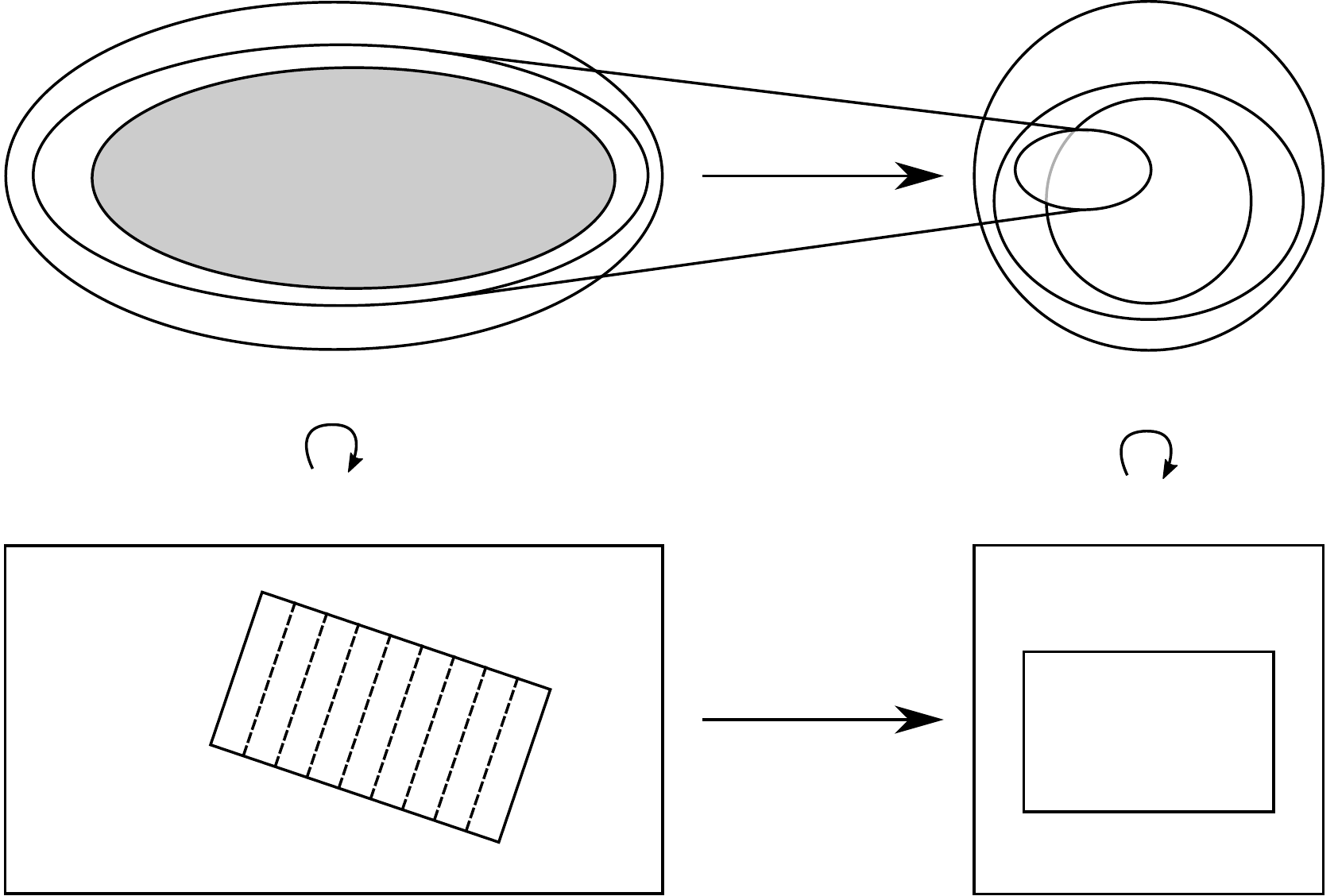}};
\node[inner sep=0pt] at (3,.3) {$G \leq \Sym(\Omega)$};
\node[inner sep=0pt] at (9.5,.3) {$\Alt(\Gamma)$};
\node[inner sep=0pt, anchor=north west] at (.2,-4.7) {$\Omega$};
\node[inner sep=0pt, anchor=north west] at (8.2,-4.7) {$\Gamma$};
\node[inner sep=0pt] at (9.5,-6) {$T$};
\node[inner sep=0pt] at (6.7,-1.3) {$\varphi$};
\node[inner sep=0pt] at (6.7,-5.75) {$\iota$};
\node[inner sep=0pt] at (9.5,-.45) {$\Alt(\Gamma)_T$};
\node[inner sep=0pt, anchor=north west] at (2,-6.6) {$W$};
\node[inner sep=0pt] at (3,-1.5) {$\Aut_{G_T}(\str x) \leq \Aut_{G_T}^W(\str x)$};
\node[inner sep=0pt] at (9.5,-2.05) {$\Alt(T)$};
\node[inner sep=0pt] at (8.9,-1.4) {$M(T)$};
\end{tikzpicture}
}
\caption{Certificate of non-fullness \label{fig:nonfull}}
\end{subfigure} \quad
\begin{subfigure}{.45\textwidth}
\resizebox{\textwidth}{!}{
\begin{tikzpicture}
\node[inner sep=0pt, anchor=north west] (russell) at (0,0) {\includegraphics[width=11cm]{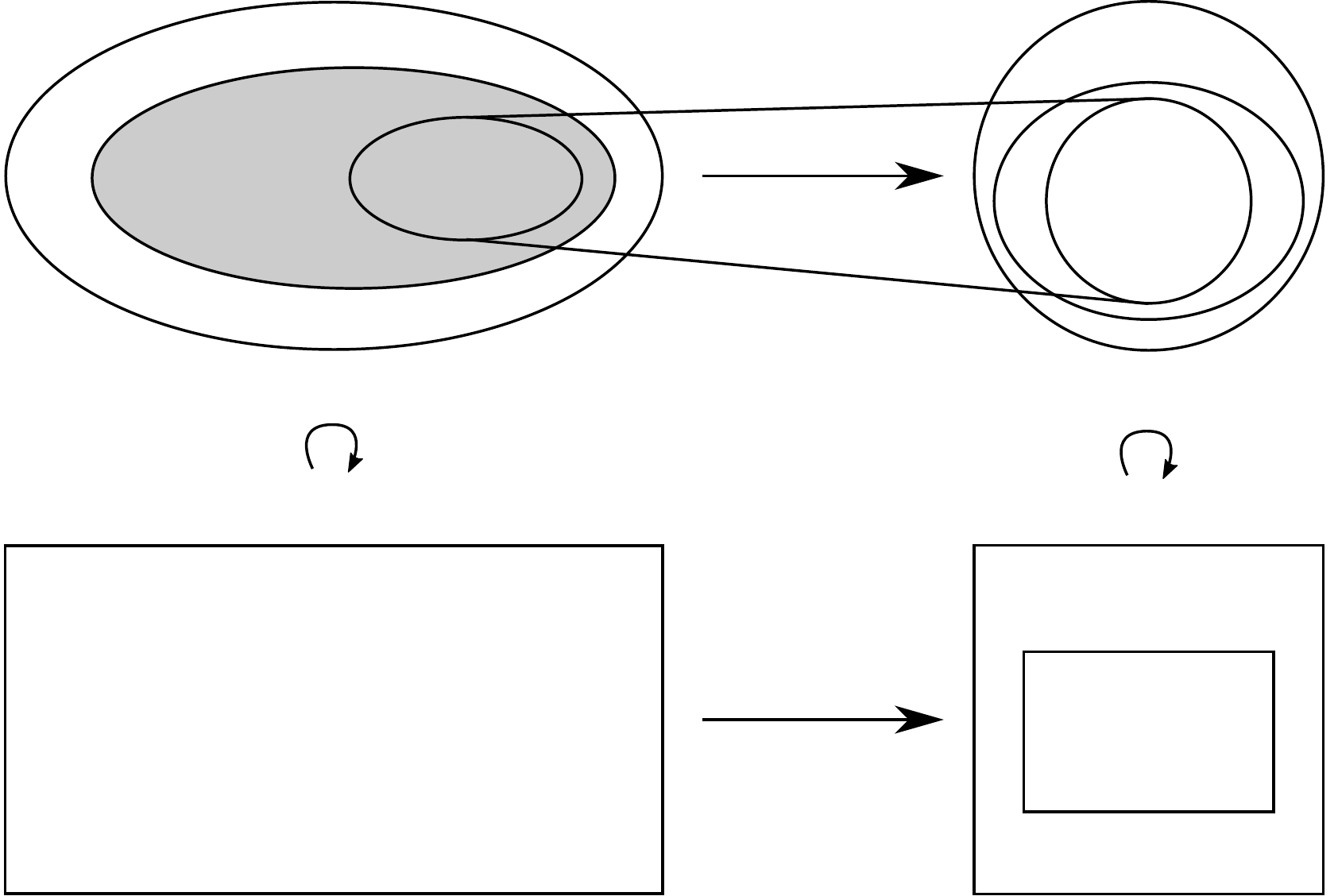}};
\node[inner sep=0pt] at (3,.3) {$G \leq \Sym(\Omega)$};
\node[inner sep=0pt] at (9.5,.3) {$\Alt(\Gamma)$};
\node[inner sep=0pt, anchor=north west] at (.2,-4.7) {$\Omega$};
\node[inner sep=0pt, anchor=north west] at (8.2,-4.7) {$\Gamma$};
\node[inner sep=0pt] at (9.5,-6) {$T$};
\node[inner sep=0pt] at (6.7,-1.3) {$\varphi$};
\node[inner sep=0pt] at (6.7,-5.75) {$\iota$};
\node[inner sep=0pt] at (9.5,-.45) {$\Alt(\Gamma)_T$};
\node[inner sep=0pt] at (9.5,-1.7) {$\Alt(T)$};
\node[inner sep=0pt] at (2,-1.5) {$\Aut_{G_T}(\str x)$};
\node[inner sep=0pt] at (3.8,-1.5) {$K(T)$};
\end{tikzpicture}
}
\caption{Certificate of fullness}
\end{subfigure}
\caption[Schematic overview of local certificates]{Schematic overview of local certificates. The group of interest in gray is $\Aut_{G_T}(\str x)$ for the test set $T$. It is either encased by $\Aut_{G_T}^W(\str x)$ from above (non-full) or by $K(T)$ from below (full). Crucial is whether the restriction to $T$ of the encasing group's $\varphi$-image is $\Alt(T)$, a giant. $G$ and the smaller $\Alt(\Gamma)$ are linked via $\varphi$ and $\iota: \Omega \to \binom{\Gamma}{l}$. For simplicity, we do not make a strict distinction between $\varphi$ and $g \mapsto \restrict{g^\varphi}{T}$.}
\end{figure}

\section{Construction of local certificates}
\label{sec:localcert1}
In this section we introduce an algorithm which verifies whether a test set $T \subseteq \Gamma$ is full or non-full. As explained before, this property does not depend on $T$ alone but also on $G$ and $\str x$. Throughout the procedure we consider more and more of the input string. Babai visualizes this strategy as \enquote{growing the beard}, cf.\@ \cref{fig:nonfull}. We will realize this iterative enlargement of the window by looking at affected elements.

In contrary to \cite[§6.1.1]{helfgott}, the algorithm will be presented in less natural language following \cite[§10.1]{babai}. For convenience, $A(W)$ will stand for $\Aut_{G_T}^W(\str x) \leq G_T$. As $W = \emptyset$ imposes an vacuous condition on $A(W)$, the group equals $G_T$ at the beginning. Throughout the procedure $W$ is enlarged which implies that $A(W)$ shrinks.
For our test set $T$ we set $k \coloneqq \abs T$ and suppose that $\max\left\{8,2+\log_2 \abs \Omega\right\} < k \ll \log \abs \Omega$. Thus, $T$ is of logarithmic size in $\abs \Omega$.

\begin{procedure}{Local Certificates}
	{test set $T \in \binom{\Gamma}{k}$, group $G \leq \Sym(\Omega)$, epimorphism $\varphi: G \to \Alt(\Gamma)$, string $\str x: \Omega \to \Sigma$}
	{either a certificate of fullness $K(T)$ or a certificate of non-fullness $(W, M(T))$}
	{localcert}
	\comment{Initialization}
	\item $W \leftarrow \emptyset$
	\item $A(W) \leftarrow G_T = \varphi^{-1}(\Alt(\Gamma)_T)$ \label{line:localcert-gt}
	\comment{Iteration: while $T$ is potentially full and non-full, i.e.\@ the image of $A(W)$ contains $\Alt(T)$ and there are still affected elements which can be added to $W$}
	\item \textbf{while} $\restrict{\varphi(A(W))}{T} \geq \Alt(T)$ and $\Aff(A(W), \varphi) \not\subseteq W$: \label{line:local-cert-while}
	\comment{\cindent Enlarge the window and recompute $A(W)$ accordingly}	
	\item \cindent $W^+ \leftarrow \Aff(A(W), \varphi)$ \label{line:localcert-aff}
	\item \cindent $N \leftarrow \ker \restrict{\varphi}{A(W)} = A(W)_{(T)}$, \textit{where\footnote{As $A(W) \leq G_T$, we can restrict $\varphi$ in a well-defined manner to a map $A(W) \to \Alt(T)$.} $\restrict{\varphi}{A(W)}: A(W) \to \Alt(T)$} \label{line:localcert-ker}
	\item \cindent $A(W^+) \leftarrow \emptyset$
	\item \cindent compute a set of right coset representatives $S$ for $N$ in $A(W)$
	\item \cindent \textbf{for} $\sigma \in S$:
	\item \cindent \cindent $A(W^+) \leftarrow A(W^+) \cup \Iso_N^{W^+}\left( \str x, \str{x}^{\sigma^{-1}} \right) \sigma$, \textit{by recurring on orbits}
	\item \cindent \textbf{end}
	\item \cindent $W \leftarrow W^+, A(W) \leftarrow A(W^+)$ \label{line:localcert-recompute-end}
	\item \textbf{end}
	\item \textbf{if} $\restrict{\varphi(A(W))}{T} \geq \Alt(T)$:
	\item \cindent $K(T) \leftarrow A(W)_{(\Omega \setminus W)}$
	\item \cindent \textbf{return} certificate of fullness $K(T)$
	\item \textbf{else}:
	\item \cindent $M(T) \leftarrow \restrict{\varphi(A(W))}{T}$
	\item \cindent \textbf{return} certificate of non-fullness $(W, M(T))$
	\item \textbf{end}
\end{procedure}

Our goal is to verify whether $\restrict{\varphi(\Aut_{G_T}(\str x))}{T}$ is or is not equal to $\Alt(T)$. In every step we enlarge the window by the elements affected by $\Aut_{G_T}^W(\str x)$. Two scenarios trigger a termination. Either the window stops growing or $\restrict{\varphi(A(W))}{T}$ no longer contains the alternating group on $T$. \Cref{thm:afforb} is crucial: If as in the first case $W$ contains all $(A(W),\varphi)$-affected elements and $\restrict{\varphi}{A(W)}: A(W) \to \Alt(T)$ is surjective, then $\restrict{\varphi\left(A(W)_{(\Omega \setminus W)}\right)}{T} = \Alt(T)$. Furthermore, $A(W)_{(\Omega \setminus W)} \leq \Aut_{G_T}(\str x)$ as the points in the complement of $T$ are fixed. Consequently, we have evaluated the action of a subgroup on the entire string without looking at the entirety of the input. In the other case, i.e.\@ when the restricted map is not surjective, we computed a certificate of non-fullness which verifies that the automorphism group of interest is not equal to the alternating group. Importantly, the assignments of the certificates to $\str x $ are canonical.

The procedure contains a couple of non-trivial operations whose execution times must be carefully analyzed.
In line~\ref{line:localcert-gt} we have to determine $G_T$ for a given $T$. Reverting to \cref{sec:schreier-sims,sec:primitive}, we do so by computing the preimage of $\Alt(\Gamma)_T$ under $\varphi$ after choosing five generators for the latter group. By the surjectivity of $\varphi$, $\frac{\abs{\Gamma}!}{2} \leq \abs{G} \leq \abs{\Omega}!$ and thus $\abs{\Gamma} \ll \abs{\Omega}$, which guarantees a polynomial running time.

The set of affected elements $\Aff(A(W), \varphi)$ in line~\ref{line:localcert-aff} is computed by iterating over all $x \in \Omega$, checking whether $\restrict{\varphi(A(W)_x)}{T} = \Alt(T)$. Both the computation of $A(W)_x$ and the subsequent verification, are done using a modified version of Schreier-Sims. The conditions in line~\ref{line:local-cert-while} are straightforward to check once we have computed $\Aff(A(W), \varphi)$.

For line~\ref{line:localcert-ker} we recall that we hold a description of $A(W)$ in terms of the generators of $G$. Thus, we can compute the kernel of $\restrict{\varphi}{A(W)}$ by taking the preimage of the trivial group in polynomial time. We may have to modify the generators of $A(W)$ in order to reflect the restriction of the image from $\Alt(\Gamma)$ to $\Alt(T)$, but by Schreier-Sims the generating set is not too big.

The purpose of the for-loop is to compute $A(W^+)$. Prior to iterating, we start with computing a set $S$ of right coset representatives for $N$ in $A(W)$, cf.\@ \cref{sec:schreier-sims}. It is clear that $N = A(W)_{(T)}$ admits polynomial membership testing.
Since the quotient $A(W)/N$ is isomorphic to $\Alt(T)$, we find $k!/2$ such representatives. Thus, computing them took quasi-polynomial time.
As $A(W^+) \subseteq  A(W)$ and $A(W) = \bigcup_{\sigma \in S} N\sigma$, we have by \cref{lemma:isoaut} that 
\begin{equation} \label{eq:chain-aut}
	A(W^+) = \Aut_{A(W)}^{W^+} (\str x) = \Aut_{\bigcup N\sigma}^{W^+} (\str x) = \bigcup_\sigma \Aut_{N\sigma}^{W^+}(\str x) = \bigcup_\sigma \Iso_N^{W^+}\left(\str x, \str{x}^{\sigma^{-1}}\right) \sigma.
\end{equation}
We can compute this union as in \cref{eq:coset-union}.

Since all elements in $W^+$ are by definition $(A(W),\varphi)$-affected, they are all $(N,\varphi)$-affected. Thus, by \cref{thm:afforb}, all orbits of $N$ that are contained in $W^+$ are of size $\leq \abs{W^+}/k \leq \abs{\Omega}/k$. We exploit this intransitivity to further reduce the subproblems.

For every $\sigma$, we perform strong Luks reduction: In polynomial time we compute a partition of $W^+$ into the orbits under the action of $N$. This partition is $N$-invariant and acted upon trivially by $N$. Calling \cref{proc:strong-luks} we reduce to one subproblem for each orbit.

After passing through all $\leq \abs \Omega$ orbits and all $\sigma$ we will have called the main procedure $\leq \abs{\Omega}(k!/2)$ times on strings of length $\leq \abs{\Omega}/k$. The additive costs are $\abs{\Omega}^{O(\log \abs{\Omega})}$ in every iteration. As $W \subseteq \Omega$ is constantly growing, the while-loop's body will be executed $\leq \abs{\Omega}$ times. Thus, in total we recur to $\leq \abs{\Omega}^2(k!/2)$ subproblems of size $\leq \abs{\Omega}/k$. As $k \ll \log \abs\Omega$, this complies with the desired bound as we will see in \cref{sec:complexity}. All steps considered, the additive costs account for $\abs{\Omega}^{O(\log \abs{\Omega})}$ operations.

\section{Comparing certificates}
\label{sec:localcert2}
In practice, that is in the application in \cref{sec:agg-cert}, we are not only interested in local certificates for one string but in the correspondence of the certificates for both input strings. Hence, we require a procedure which allows us to efficiently compare local certificates. By comparing two certificates we mean verifying whether there exist string isomorphisms sending one test set pointwise to the other and respecting certain parts of the input strings.

The setting is as follows: We are given two strings $\str x, \str y: \Omega \to \Sigma$, two test sets $T, T' \subseteq \Gamma$ of the same size $k = \abs T = \abs{T'}$. Again, we will look at increasing parts of the strings. The following notation will be useful.

\begin{notation}[Truncation of strings]
Let $\str x: \Omega \to \Sigma$ a string, $W \subseteq \Omega$ the window. Then the \emph{truncated string} $\str x^W: \Omega \to \Sigma \mathbin{\dot{\cup}} \{\text{glaucous}\}$ is defined as 
\[
\str x^S(i) \coloneqq \begin{cases} \str x(i), & i \in S, \\ \text{glaucous}, & i \in \Omega \setminus S, \end{cases}
\] 
where glaucous is a symbol alien to the alphabet $\Sigma$.
\end{notation}

\begin{notation}
For a homomorphism $\varphi: G \to \Alt(\Gamma)$ and two sets $T, T' \subseteq \Gamma$ whose elements are ordered, i.e.\@ $T = \{t_1, \dots, t_k\}, T' = \{t'_1, \dots, t'_k\}$, we define
\[
G_{T,T'} \coloneqq \left\{ g \in G\ \middle|\ T^{\varphi(g)} = T' \right\}, \quad 
G_{(T, T')} \coloneqq \left\{ g \in G\ \middle|\ \forall i = 1, \dots, k: t_i^{\varphi(g)} = t'_i \right\}.
\]
It is clear, that these stabilizers can be written as $G_{T,T'} = G_T \sigma$ and $G_{(T,T')} = G_{(T)} \tau$ for $\sigma \in G_{T,T'}$ and $\tau \in G_{(T,T')}$.
\end{notation}

The objective of this section is to compute $\Iso_{G_{T,T'}}\left(\str x^{W(T)}, \str y^{W(T')}\right)$ where $W(T)$ and $W(T')$ are the windows returned by \cref{proc:localcert} for inputs $T$ and $T'$ respectively. In other words, we are interested in the elements of $G$ whose $\varphi$-images send $T$ to $T'$ and which respect the input strings $\str x$ and $\str y$ at least on the given windows. 
We obtain the result by running \cref{proc:localcert} in parallel for both input strings. While enlarging the windows $W(T)$ and $W(T')$ we keep track of $\Iso_{G_{T,T'}}\left( \str x^W, \str y^{W'} \right)$ which we will call $Q(W,W')$.

In the beginning, both windows are empty. Thus $Q(W,W') = G_{T,T'}$. The more the windows are enlarged the more $Q(W,W')$ shrinks. Again, the test sets must satisfy $\max\left\{8,2+\log_2 \abs \Omega\right\} < k \ll \log \abs \Omega$ where $k = \abs T = \abs{T'}$.

\begin{procedure}
	{Comparing Local Certificates}
	{test sets $T, T' \in \binom{\Gamma}{k}$, group $G \leq \Sym(\Omega)$, epimorphism $G \to \Alt(\Gamma)$, strings $\str x, \str y: \Omega \to \Sigma$}
	{windows $W(T), W(T') \subseteq \Omega$, the group $\Iso_{G_{T,T'}}\left(\str x^{W(T)}, \str y^{W(T')}\right)$}
	{localcert2}
	\comment{Initialization}
	\item $W \leftarrow \emptyset, W' \leftarrow \emptyset$
	\item $A(W) \leftarrow G_T = \varphi^{-1}(\Alt(\Gamma)_T), A(W') \leftarrow G_{T'} = \varphi^{-1}(\Alt(\Gamma)_{T'})$
	\item $Q(W, W') \leftarrow G_{T,T'} = A(W) \sigma$, for any $\sigma \in G_{T,T'}$ \label{line:localcert2-coset}
	\item \textbf{while} $\restrict{\varphi(A(W))}{T}~\geq~\Alt(T)$ and $\Aff(A(W), \varphi)~\not\subseteq~W$ and $\restrict{\varphi(A(W'))}{T'}~\geq~\Alt(T')$ and $\Aff(A(W'), \varphi)~\not\subseteq~W'$:
	\comment{\cindent Enlarge the windows and recompute $A(W), A(W')$ and $Q(W,W')$ accordingly}	
	\item \cindent $W^+ \leftarrow \Aff(A(W), \varphi)$, $W'^+ \leftarrow \Aff(A(W'), \varphi)$
	\item \cindent $N \leftarrow \ker \restrict{\varphi}{A(W)} = A(W)_{(T)}$
	\item \cindent $A(W^+) \leftarrow \emptyset, Q^+ \leftarrow \emptyset$
	\item \cindent fix $\pi_0 \in Q(W,W')$ \label{line:localcert2-fix-pi}
	\item \cindent compute a set of right coset representatives $S$ for $N$ in $A(W)$
	\item \cindent \textbf{for} $\sigma \in S$:
	\item \cindent \cindent $A(W^+) \leftarrow A(W^+) \cup \Iso_N^{W^+}\left( \str x, \str{x}^{\sigma^{-1}} \right) \sigma$ \label{line:localcert2-recompute1}
	\item \cindent \cindent $\pi \leftarrow \sigma \pi_0$
	\item \cindent \cindent $Q^+ \leftarrow Q^+ \cup \Iso_N \left( \str x^{W^+}, \left({\str y}^{W'^+}\right)^{\pi^{-1}} \right) \pi$
	\item \cindent \textbf{end}
	\item \cindent $W \leftarrow W^+, A(W) \leftarrow A(W^+)$
	\item \cindent complete recomputing $W'$ and $A(W')$ as in \cref{proc:localcert} lines~\ref{line:localcert-aff}-\ref{line:localcert-recompute-end}.	\label{line:localcert2-recompute2}
	\item \cindent \textbf{if} $Q^+ = \emptyset$: reject isomorphicity, \textbf{exit}
	\item \cindent $Q(W,W') \leftarrow Q^+$
	\item \textbf{end}
	\item \textbf{return} windows $W$ as $W(T)$, $W'$ as $W(T')$ and $Q(W,W')$
\end{procedure}

Recomputing $Q(W,W')$ requires some attention. Following \cref{eq:chain-aut}, we reuse the coset representatives for $N = \ker \restrict{\varphi}{A(W)}$ in $A(W)$ which together with the fixed element $\pi_0 \in Q(W,W')$ serve as representatives for the cosets $N\sigma \pi_0$ of $N$ in $A(W)\pi_0 = Q(W,W')$. For $W \subseteq W^+, W' \subseteq W'^+$, obviously $Q(W^+, W'^+) \leq Q(W,W') \leq G_{T,T'}$. Hence, abbreviating $\pi \coloneqq \sigma \pi_0$, we have
\begin{align}
Q(W^+, W'^+) 
&= \Iso_{G_{T,T'}}\left( \str x^{W^+}, \str y^{W'^+} \right)
= \Iso_{Q(W, W')}\left( \str x^{W^+}, \str y^{W'^+} \right)
= \bigcup_{\sigma} \Iso_{N\sigma\pi_0} \left( \str x^{W^+}, \str y^{W'^+} \right) \nonumber \\
&= \bigcup_{\pi} \Iso_{N}  \left( \str x^{W^+}, \left(\str y^{W'^+} \right)^{\pi^{-1}}\right) \pi
\end{align}
As in the previous case, all orbits of $N$ in $W^+$ are of size $\leq \abs{W^+}/k \leq \abs \Omega / k$. We resume with Luks reduction on the $\leq \abs \Omega$ orbits. As $\gindex{A(W)}{N} = k!/2$, we have to make additional $\abs{\Omega}^2 (k!/2)$ calls to the main routine for strings of length $\leq \abs \Omega / k$.

Finding an element $\sigma$ in line~\ref{line:localcert2-coset} satisfying $G_{T,T'} = G_T \sigma$ is possible in polynomial time by Schreier-Sims, cf.\@ \cref{sec:schreier-sims}, as any $\rho \in \Alt(\Gamma)$ mapping $T^\rho = T'$ has a non empty preimage $\varphi^{-1}(\{\rho\})$ from which $\sigma$ can be chosen. Such a $\rho$ can be easily constructed by enumerating the elements of $T$ and $T'$.

Besides the treatment of $Q(W,W')$ the recomputation of $A(W)$ and $A(W')$ in lines~\ref{line:localcert2-recompute1} and \ref{line:localcert2-recompute2} are expensive. Summing up, we have recurred to $3\abs{\Omega}^2 (k!/2)$ subproblems of size $\leq \abs{\Omega}/k$. The additive costs account for $\abs{\Omega}^{O(\log \abs \Omega)}$ operations. This complies with our desired bound as we will see in \cref{sec:complexity}.

From a computational point of view, sets are much harder to handle than indexed lists. Hence, we are not only interested in $\Iso_{G_{T,T'}}\left(\str x^{W(T)}, \str y^{W(T')}\right) = Q$ but in $\Iso_{G_{(T,T')}}\left(\str x^{W(T)}, \str y^{W(T')}\right)$, i.e.\@ the set of isomorphism taken from $G$ which map the tuple $T = \{t_1,\dots, t_k\}$ to $T' = \{t'_1,\dots, t'_k\}$ in the right order. After obtaining the former from \cref{proc:localcert2}, it is passed to a modified version of Schreier-Sims which in virtue of $\varphi$ provides generators for $Q_i = \left\{\sigma \in Q\ \middle|\ \forall l = 1, \dots, i: t_l^{\varphi(\sigma)} = t'_l \right\}$ for $i \leq k-1$. As $Q \leq G_{T,T'}$, $\gindex{Q_i}{Q_{i+1}} \leq k$, which assures a polynomial execution time. Finally, $Q_{k-1} = \Iso_{G_{(T,T')}}\left(\str x^{W(T)}, \str y^{W(T')}\right)$. This justifies the following procedure.

\begin{proc}[Comparing Local Certificates for Tuples] \label{proc:localcert3}
	\Input test sets $T, T' \in \Gamma^k$, group $G \leq \Sym(\Omega)$, epimorphism $G \to \Alt(\Gamma)$, strings $\str x, \str y: \Omega \to \Sigma$ \\
	\Output windows $W(T), W(T') \subseteq \Omega$, the group $\Iso_{G_{(T,T')}}\left(\str x^{W(T)}, \str y^{W(T')}\right)$ \\
	\Complexity reduction to $3\abs{\Omega}^2 (k!/2)$ subproblems of size $\leq \abs{\Omega}/k$, additive costs $\abs{\Omega}^{O(\log \abs \Omega)}$
\end{proc}

\section{Aggregation of certificates}
\label{sec:agg-cert}
In this section we will combine the tools provided in the two preceding sections in order to reduce the problem of determining $\Iso_G(\str x, \str y)$ to subproblems that can be solved by other algorithmic building blocks. We are again equipped with an epimorphism $\varphi: G \to \Alt(\Gamma)$. Choose $k \in \mathbb{N}$ such that $\max\left\{8,2+\log_2 \abs \Omega\right\} < k < \abs{\Gamma}/10$ and $k \ll \log\abs\Omega$, for example $k \coloneqq \ceil{8+ \log_2\abs\Omega}$. Then $\abs\Omega/k \leq \abs\Omega/2$.

After computing and comparing local certificates we will have obtained one of the following results:
\begin{description}
	\item[Case 1] canonical colored $1/2$-partitions on $\Gamma$ for the two strings $\str x$ and $\str y$,
	\item[Case 2a] reduction to $\leq 3\abs \Omega$ many subproblems for strings of length $\leq \abs \Omega/2$,
	\item[Case 2b] canonically embedded binary relational structures on $\Gamma$ with symmetry defect $\geq 1/2$ for the two strings $\str x$ and $\str y$ and $\leq \abs\Omega$ subproblems of length $\leq \abs \Omega/2$ at additional multiplicative costs,
	\item[Case 3] canonically embedded $k$-ary relational structures on $\Gamma$ with symmetry defect $> 4/5$ for the two strings $\str x$ and $\str y$ and $\leq \abs\Omega$ subproblems subproblem of length $\leq \abs \Omega/2$.
\end{description}

We will leave the overall cost estimate to \cref{sec:complexity}. Ludi incipiant:

First we run \cref{proc:localcert} for both input strings $\str x $ and $\str y$ and all test sets $T \subseteq \Gamma$ of size $k$. For every such test set and both of the strings we obtain either a certificate of fullness or a certificate of non-fullness. Let $F(\str x) \leq \Aut_G(\str x)$ denote the group generated by the certificates of fullness $K(T)$ associated to $\str x$, $F(\str y)$ respectively. Let $S(\str x) \subseteq \Gamma$ denote the support of $F(\str x)^\varphi$, i.e.\@ the set of all elements of $\Gamma$ that are not fixed by all elements of the image of $F(\str x)$ under $\varphi$. In total, we thus make $2\binom{\abs{\Gamma}}{k} \leq 2 \frac{\abs\Gamma^k}{k!} \leq 2 \abs\Gamma^k \ll \abs\Omega^{O(\log \abs\Omega)}$ calls to \cref{proc:localcert} resulting in $\leq \abs \Gamma^k \abs \Omega^2 \ll \abs\Omega^{O(\log \abs\Omega)}$ many subproblems of length $\leq \abs\Omega/2$ and additive costs $\abs\Omega^{O(\log \abs\Omega)}$.

From now on, we omit the reference to the string in our notation. Implicitly, we work on both strings, distinguishing cases. If we arrive with the two strings in different cases, we can reject isomorphicity. By the canonicity of the assignment of the certificates, $F(\str x)$ and $S(\str x)$ are canonical. We will write $F \coloneqq F(\str x)$ and $S \coloneqq S(\str x)$. In particular, we can assume that $F(\str x)$ and $F(\str y)$ are conjugate subgroups of $\Sym(\Omega)$. $S(\str x)$ and $S(\str y)$ are not necessary equal as sets but of the same size.

The numbering of the cases follows \cite[§6.2]{helfgott} and \cite[§13.2]{babai2}. For checking the case's preconditions, we compute the orbits of the action of $\varphi(F)$ on $\Gamma$ in polynomial time in $\abs \Gamma \ll \abs \Omega$.

\paragraph{Case 1} $\abs S \geq \abs \Gamma/2$, but no orbit of $\varphi(F)$ is of length $> \abs \Gamma/2$. \nopagebreak

We want to compute canonical colored $1/2$-partitions on $\Gamma$ depending on the orbit structure of $\varphi(F)$, cf.\@ \cref{def:colored-partition}. We will obtain two such partitions associated to $\str x$ and $\str y$ respectively since $\varphi(F(\str x))$ and $\varphi(F(\str y))$ are not identical but only conjugate as subgroups of $\Alt(\Gamma)$. Below the reference to the string will be omitted from the notation.

As the orbit structure is canonical, we can color the elements of $\Gamma$ according to the lengths of their respective orbits in a canonical way. This is surely possible in polynomial time. If no color class dominates, i.e.\@ no set of elements in $\Gamma$ with a certain color is of size $> \abs \Gamma/2$, we have found a canonical colored $1/2$-partition. If contrarily such a big color class exists, it must partition at least into two sets of the same size $\geq 2$, as there would otherwise be an orbit of length $> \abs \Gamma/2$ or a trivial orbit in contradiction to the assumption. Refining the color classes according to this partition yields a canonical colored $1/2$-partition as well.

\paragraph{Case 2} $\abs S \geq \abs \Gamma/2$ and there is an orbit $\Phi$ of $\varphi(F)$ in $\Gamma$ satisfying $\abs \Phi > \abs \Gamma/2$. \nopagebreak

Since there can only be one such orbit associated to each string, $\Phi$ is again canonical. Note that this does not mean that the string's dominating orbits are equal as sets. Let $\Phi_{\str x}$ and $\Phi_{\str y}$ denote the orbits of this kind associated to the strings $\str x$ and $\str y$ respectively. They must be of the same size, otherwise we have refuted isomorphicity. We align the two orbits as we did in \cref{sec:primitive}. First, we construct $\sigma \in \Alt(\Gamma)$ such that $\Phi_{\str x}^\sigma = \Phi_{\str y}$ by enumerating the orbits' elements. Now $\sigma$ can be lifted along the epimorphism $\varphi$ using Schreier-Sims to a $\tau \in G$ satisfying $\tau^\varphi = \sigma$. Replacing $\str y \leftarrow \str y^\tau$ finishes the alignment. We can hence assume that $\Phi_{\str x} = \Phi_{\str y}$, which we will call $\Phi$ again. 

Reverting to \cref{sec:schreier-sims}, the preconditions of the two subsequent subcases can be tested in polynomial time. 

\subparagraph{Case 2a} $\Alt(\Phi) \leq \restrict{\varphi(F)}{\Phi}$. \nopagebreak

In this case we have found a feature of large symmetry in both strings. Recall, that we hold a surjection $\iota: \Omega \to \binom{\Gamma}{l}$. Invoking \cref{proc:pullback}, we let the partition $(\Gamma \setminus \Phi) \mathbin{\dot\cup} \Phi$ of $\Gamma$ induce a partition $\Omega_0 \mathbin{\dot\cup} \cdots \mathbin{\dot\cup} \Omega_l$ of $\Omega$. It is guaranteed that $\Omega_0 = \iota^{-1}\binom{\Phi}{l}$ and hence $\abs{\Omega_j} \leq \abs \Omega/2$ for all $0 < j \leq l$ by the precondition of case~2.

Due to the orbit's canonicity, any possible isomorphism from $\str x$ and $\str y$ must leave $\Phi$ as a set invariant, i.e.\@ a $\sigma \in G$ can only satisfy $\str x = \str y^\sigma$ if $\Phi^{\varphi(\sigma)} = \Phi$. Therefore we have reduced the problem of determining $\Iso_G(\str x, \str y)$ to the problem of computing $\Iso_H(\str x, \str y)$ where $H = \varphi^{-1}\left(\Alt(\Gamma)_\Phi\right)$. Instead of brutally computing a description for $H$, we scrutinize its structure and perform a more sophisticated reduction. Let $K = \varphi^{-1}\left(\Alt(\Gamma)_{(\Phi)} \right)$ and $\sigma_1, \sigma_2 \in G$ be preimages under $\varphi$ of two generators of $\Alt(\Phi) \leq \restrict{\Alt(\Gamma)}{\Phi}$, cf.\@ \cref{eq:generators-alt}. Recalling \cref{sec:schreier-sims}, we compute $K$ as a preimage of a pointwise stabilizer in polynomial time. As $\Alt(\Phi) \leq \restrict{\varphi(F)}{\Phi} \leq \restrict{\Aut_{G}(\str x)}{\Phi}$, the two sets $\Aut_{K\sigma_i}(\str x)$ for $i=1,2$ are non-empty.

By the correspondence of $\varphi$ and $\iota$, $K$ acts on $\Omega$ but fixes $\Omega_0$. The orbits of this action refine the partition of $\Omega$ into the $\Omega_j$ because the number of elements from $\Phi$ in an element of $\binom{\Gamma}{l}$ is invariant under the action of $K$. Thus all orbits of $K$ are at most of length $\abs \Omega/2$. Consequently, we can compute the two $\Aut_{K\sigma_i}(\str x) = \Iso_K\left(\str x, \str x^{\sigma_i^{-1}} \right) \sigma_i$ by calling the main procedure for strings of length $\leq \abs \Omega/2$ and total length $\leq 2\abs \Omega$. The number of calls is $\leq 2l \leq 2\abs \Omega$ and the costs for the reduction are negligible. As $\Alt(\Gamma)_{\Phi}/\Alt(\Gamma)_{(\Phi)} \cong \Alt(\Phi)$, the two $\Aut_{K\sigma_i}(\str x)$ generate $\Aut_H(\str x)$. By the same argument,
\[
\Iso_H(\str x, \str y) = \Aut_H(\str x) \Iso_K(\str x, \str y),
\]
iff $\Iso_K(\str x, \str y)$ is non-empty. The latter is the case if and only if $\Iso_H(\str x, \str y)$ is non-empty because by assumption $\Alt(\Phi) \leq \restrict{\varphi(F)}{\Phi}$. As above, the shortness of $K$'s orbits allows us to compute $\Iso_K(\str x, \str y)$ by calling the main procedure $\leq l \leq \abs\Omega$ times on strings of length $\leq  \abs \Omega/2$ and total length $\leq \abs \Omega$. This completes the reduction.

\subparagraph{Case 2b} $\Alt(\Phi) \not\leq \restrict{\varphi(F)}{\Phi}$. \nopagebreak

Let $d \geq 1$ denote the maximal integer such that $\restrict{\varphi(F)}{\Phi}$ acts $d$-transitively on $\Phi$, i.e.\@ the induced action of $\restrict{\varphi(F)}{\Phi}$ on the set of tuples of length $d$ drawn from $\Phi$ with distinct elements is transitive. \cite[Theorem~13.1.1, Remark~13.1.2]{babai} following \cite{wielandt} implies that $d \leq 5$, or without dependence on the Classification of Finite Simple Groups $d \ll \log \abs \Gamma$. We want to ensure that $\restrict{\varphi(F)}{\Phi}$  is transitive but not doubly transitive, i.e. $d=1$. If this is the case already, the next paragraph can be skipped. We determine $d$ by checking iteratively for $i = 1,\dots, (d+1)$ whether $\restrict{\varphi(F)}{\Phi}$ is $i$-transitive. This takes $\binom{\abs\Gamma}{d+1}^{O(1)} \ll \abs\Omega^{O(1)}$ elementary operations, cf.\@ \cref{sec:orbitsblocks}.

We choose any set $T \in \binom{\Phi}{d-1}$ and individualize its elements. Heuristically individualization means to assign new unique colors to certain elements. Any isomorphism must preserve these colors which of course limits the number of possible isomorphisms drastically. On the other hand, any further treatment will depend on the choice of the individualized elements. Thus, the algorithm branches. The challenge is to keep the size of the subproblems and the intensity of individualization balanced. Applied to our case this means that we perform weak Luks reduction to the subgroup $G_{(T)}$, cf.\@ \cref{proc:weak-luks,not:phi-stabilizer}. The number of subproblems is
\[
	\gindex{G}{G_{(T)}} = \gindex{G}{\varphi^{-1}\left(\Alt(\Gamma)_{(T)}\right)} = \frac{\abs G}{\abs{\ker \varphi} \abs{\Alt(\Gamma\setminus T)}}
	= \frac{\abs{G/N}}{\abs{\Alt(\Gamma\setminus T)}}
	= \frac{\abs \Gamma!}{(\abs \Gamma-d+1)!}
	\leq \abs{\Gamma}^{d-1} \ll \abs \Gamma^{O(1)}.
\]

Clearly, for $k > 1$, a group is $k$-transitive if and only if it is $(k-1)$-transitive and each of the $(k-1)$-point stabilizers are transitive on the set of the remaining points, cf.\@ \cite[p.\@~211]{dixon}. For $\Phi' = \Phi \setminus T$, we conclude that $\restrict{\varphi(F)_{(T)}}{\Phi'}$ acts transitively but is not doubly transitive. We replace $\varphi$ with $g \mapsto \restrict{g^\varphi}{\Gamma \setminus T}$, rename $\Phi \leftarrow \Phi'$ and consider from now on $\restrict{\varphi(F)}{\Phi}$, a group satisfying $d=1$ in the notation from above. Surely, $\varphi: G \to \Alt(\Gamma \setminus T)$ is still an epimorphism.

We will construct two binary rational structures with large symmetry defect. One structure will be associated to $\str x$, the other to $\str y$. Until now, the treatment for both strings was the same. In particular, because $F(\str x)$ and $F(\str y)$ are conjugate and $\Phi$ has been aligned. What follows now, depends on the input string.

Let $\mathfrak{X} = (\Phi; R_1, \dots, R_r)$ be the orbital configuration of $\restrict{\varphi(F)}{\Phi}$ acting on $\Phi$, cf.\@ \cref{lemma:orbital-config}. It is non-trivial and homogeneous. Each orbital is at least of length $\abs{\Phi}$. Therefore $3 \leq r \leq \abs \Phi$. Choose a constituent graph $X$ other than the diagonal. Then, by \cref{lemma:biregular}, $X$ is a biregular non-trivial digraph.

We proceed with individualizing $X$. Note that $X$ is not a choice of points that any isomorphism should fix, as above when individualizing $T$, but a more complex structure. Despite that, the principle remains the same: We fixed $X$ for one of the input strings. Now the corresponding structure for the other string can be $X$ or one of the other off-diagonal constituent graphs. We have to branch into $r-1 \leq \abs \Gamma -1 \ll \abs \Omega$ subproblems, one for each of these choices. However, these multiplicative costs are acceptable. Having done this, we can assume that $X$ is a canonical binary relational structure on $\Phi$ shared by the two input strings. We apply \cref{cor:2413}. $X$ has a relative symmetry defect of $\geq 1/2$ on $\Phi$. 

We require a canonical structure with large symmetry defect on $\Gamma$. Problematically, $X$ covers only $\Phi$. But as in the preceding case, any isomorphism from $\str x$ to $\str y$ must preserve $\Phi$. Thus we can on one hand restrict $\varphi$ to $g \mapsto \restrict{g^\varphi}{\Phi}$ and return $X$ as canonical binary relational structure with large symmetry defect. In this case $\Phi$ becomes our new $\Gamma$. On the the hand, we determine a partition of $\Omega$ corresponding to $\Gamma = \Phi \mathbin{\dot\cup} (\Gamma\setminus \Phi)$ as in case~2a applying \cref{proc:pullback} and call the main procedure. Since $\abs{\Gamma \setminus \Phi} \leq \abs \Gamma/2$, the $\leq \abs\Omega$ subproblems are of length $\leq \abs\Omega/2$.

\paragraph{Case 3} $\abs S < \abs \Gamma/2$. \nopagebreak

We want to turn local asymmetry into global irregularity by constructing a $k$-ary relational structure on $\Gamma$ with large symmetry defect on $\Gamma \setminus S$. First, we align $S(\str x)$ and $S(\str y)$ in polynomial time as at the beginning of case 2. We can now assume that $S = S(\str x) = S(\str y)$ and restrict $\varphi$ to $g \mapsto \restrict{g^\varphi}{\Gamma \setminus S}$. On $S$ we call the main procedure as we did in case~2b. This accounts for $\leq\abs\Omega$ subproblems for strings of length $\leq \abs \Omega/2$.

We assign colors to the elements of $(\Gamma \setminus S)^k$. In order to guarantee canonicity, we let these colors originate in the local certificate data. For all $(\str u, \str v) \in \left\{(\str x, \str x), (\str x, \str y), (\str y, \str y)\right\}$ and $T,T' \in (\Gamma \setminus S)^k$ we call \cref{proc:localcert3} obtaining groups $\Iso_{G_{(T,T')}}\left(\str u^{W(T)}, \str v^{W(T')}\right)$. In total, these are no more that $3 \abs{\Gamma \setminus S}^k \ll \abs \Omega^{O(\log \abs \Omega)}$ calls. The colors that we infer correspond to the equivalence classes of the following relation:
\begin{equation} \label{eq:rel-iso}
	(\str u, T) \sim (\str v, T') \iff \Iso_{G_{(T,T')}}\left(\str u^{W(T)}, \str v^{W(T')}\right) \neq \emptyset.
\end{equation}

It is worth checking that this is indeed an equivalence relation. Transitivity and symmetry are clear. For reflexivity, recall that $T \subseteq \Gamma \setminus S$ where $S$ is the set of elements that are not fixed by $F \leq \Aut_G(\str x)$, a non-trivial group generated by the certificates of fullness. Thus $T$ is fixed by $F$ and $\emptyset \neq F = F_{(T)} \leq \Aut_{G_{(T)}}(\str x) \leq \Aut_{G_{(T)}}\left(\str x^{W(T)}\right)$. Indeed, $(\str x, T) \sim (\str x, T)$. Because the test sets are not full, not all $(\str x, T)$ for $T \in \binom{\Gamma \setminus S}{k}$ belong to the same equivalence class.

As in case~2b, we want to compute two canonical structures $\mathfrak{X}(\str x)$ and $\mathfrak{X}(\str y)$ associated to the two input strings. We thus infer two colorings from the equivalence relation. In the one associated to $\str x$ the set $T \in (\Gamma \setminus S)^k$ gets the color of $(\str x, T)$, in the coloring corresponding to $\str y$ the set $T$ carries the color of $(\str y, T)$. From now on, we will omit the string from our notation keeping in mind that we compute not one but two canonical structures.

The coloring can be extended canonically to $\binom{\Gamma \setminus S}{k}$; the color of a vector with multiply occurring entries will be a mediocre gray. Computing these color classes is not a problem. From \cref{proc:localcert3} we get the right-hand side of \cref{eq:rel-iso}. For determining the partition of $(\Gamma \setminus S)^k$ into the color classes we create $\nu = \abs{(\Gamma\setminus S)^k}$ entries in a union--find data structure, make $O(\nu^2)$ comparisons based on the knowledge from \cref{eq:rel-iso}, make $\leq \nu-1$ unions and finally request $\nu$ times the colors of each entry, i.e.\@ the set representatives in the data structure. In total, this takes $\ll \nu^2 + \nu \alpha(\nu, \nu) \ll \abs{\Omega}^{O(\log \abs \Omega)}$ elementary operations, where $\alpha$ is the (practically absolutely bounded) functional inverse of Ackermann's function, cf.\@ \cite{tarjan}. Consequently, the costs for the calls to \cref{proc:localcert3} far outweigh the extra expenses for the computation of the relational structure. We will call the described $k$-ary relational structure $\mathfrak{X} = (\Gamma \setminus S, c)$. 

It remains to show that $\mathfrak{X}$ has large symmetry defect, cf.\@ \cref{def:symrelstruct}. Assume that $\mathfrak{X}$ admits a twin class $C$ with $\geq k$ elements. Then $C$ contains a set $T$ of size $k$. Since $T$ does not contain any duplicates, it is not gray. Because $C$ is a twin class, all possible orderings of $T$ must have the same color: Any transposition $(x\ y)$ of elements $x,y\in T$ is contained in $\Aut(\mathfrak{X})$ and may not push an ordering of $T$ out of its color class. Being able to find a permutation in $\Aut_{G_{(T)}}^{W(T)}(\str x)$ for every ordering of $T$ means that the ordering of $T$ is irrelevant. We have $\Alt(T) = \restrict{\varphi\left(\Aut_{G_{T}}^{W(T)}(\str x)\right)}{T}$ in contradiction to the fact that $T$, as a subset of $\Gamma \setminus S$, is not full. Consequently, all twin classes of $\mathfrak{X}$ are of size $< k$. The symmetry defect of $\mathfrak{X}$ is therefore by assumption $\geq \frac{\abs{\Gamma\setminus S} - k}{\abs{\Gamma \setminus S}} > 1- \frac{\abs\Gamma /10}{\abs \Gamma /2} = 4/5$. The structure that we return is $\mathfrak{X}$.

\section{Effect of the discovery of canonical structures}
\label{sec:align}

This section deals primarily with the outcomes of \cref{sec:agg-cert} and secondarily with those of \cref{sec:primitive}. We are given as before an epimorphism $\varphi: G \to \Alt(\Gamma)$ and a surjection $\iota: \Omega \to \binom{\Gamma}{k}$. In cases~1, 2b and 3 we have obtained canonical structures $\mathfrak{X}(\str x)$ and $\mathfrak{X}(\str y)$ associated with the two input strings $\str x$ and $\str y$. Here, $\Gamma$ refers to the subsets on which we have found the structures in the previous section. These structures can either be canonical colored $1/2$-partitions or canonically embedded $\kappa$-ary relational structures with relative symmetry defect $\geq 1/2$ for $\kappa \in \{2,k\}$. Of course, $\mathfrak{X}(\str x)$ and $\mathfrak{X}(\str y)$ are structures of the same kind as we would have had refuted isomorphicity otherwise. We follow Babai's align procedure, cf.\@ \cite[§11.1]{babai} and \cite[§5.3]{helfgott}.

In case of a $\kappa$-ary relational structure with large symmetry defect we call the the Extended Design Lemma, cf.\@ \cref{proc:design}, obtaining either a canonical colored $1/2$-partition  on $\Gamma$ or a canonically embedded non-trivial Johnson scheme on a subset $W \subseteq \Gamma$ such that $\abs W \geq \abs \Gamma/2$. Note that since the $W$ depends on the input structure, we have actually found subsets $W_{\str x}$ and $W_{\str y}$ carrying Johnson schemes. These sets do not have to be identical.

Hence, the structures are either colored partitions or Johnson schemes. These objects have the advantage that their automorphism group can be computed rapidly. That we will look for automorphisms in $G^\varphi$ only does not complicate the computation, since $G^\varphi = \Alt(\Gamma)$.

We aim at aligning the two structures $\mathfrak{X}(\str x)$ and $\mathfrak{X}(\str y)$, that is we want to shift $\str y$ to $\str y^{\sigma^{-1}}$ for some permutation $\sigma \in G$ such that $\mathfrak{X}(\str x) = \mathfrak{X}\left(\str y^{\sigma^{-1}}\right)$. Then the set of isomorphisms that we want to compute has to satisfy
\begin{equation} \label{eq:align-result}
	\Iso_G(\str x, \str y) = \Iso_{G_1}\left(\str x, \str y^{\sigma^{-1}}\right) \sigma \quad \text{for} \quad G_1^\varphi = \Aut(\mathfrak{X}(\str x)).
\end{equation}
This clearly results in a significant reduction of the problem size. Our goal is to compute suitable $G_1$ and $\sigma$. We distinguish two cases:

\paragraph{Johnson schemes} Suppose that $\mathfrak{X}(\str x) = \mathfrak{J}(m_1,t_1)$ and $\mathfrak{X}(\str y) = \mathfrak{J}(m_2,t_2)$ are non-trivial Johnson schemes on subsets $W_{\str x}, W_{\str y} \subseteq \Gamma$. The schemes are $G^\varphi$-isomorphic if $m_1 = m_2 \eqqcolon m$ and $t_1 = t_2 \eqqcolon t$. If this is not the case, we can refute isomorphicity. From \cref{lemma:johnsonaut} we know that $\Aut(\mathfrak{X}(\str y)) \cong \Aut(\mathfrak{X}(\str x)) \cong \Sym_m^{(t)}$. Since the schemes are non-trivial, $t \geq 2$. What justifies the reduction is that for $\str z \in \{\str x, \str y\}$ the action of $\Aut(\mathfrak{X}(\str z))$ on $W_{\str z}$ is entirely described by an action on $m \asymp \binom{m}{2}^{1/2} \ll \binom{m}{t}^{1/2} = \sqrt{\abs{W_{\str z}}} \leq \sqrt{\abs \Gamma}$ elements. We can hope for a reduction to a subgroup acting on a domain of size $m \ll \sqrt{\abs \Gamma}$.

We apply \cref{proc:iota} for $\Sym(W_{\str x})$, $W_{\str x}$, $m$ and $t$ as well as for the objects corresponding to $\str y$. In this way, we obtain two bijections $\iota_{\str x}: W_{\str x} \to \binom{\Lambda_{\str x}}{t}$ and $\iota_{\str y}: W_{\str y} \to \binom{\Lambda_{\str y}}{t}$ for sets $\Lambda_{\str x}$ and $\Lambda_{\str y}$. We establish any bijection between $\Lambda_{\str x}$ and $\Lambda_{\str y}$. This allows us to compute a $\tau \in \Alt(\Gamma)$ such that $W_{\str x}^\tau = W_{\str y}$. We lift $\tau$ along $\varphi$ using Schreier-Sims to a $\sigma \in G$ such that $\sigma^\varphi = \tau$ and hence $\mathfrak{X}(\str x) = \mathfrak{X}\left(\str y^{\sigma^{-1}}\right)$. \Cref{proc:iota} also provided an isomorphism $\psi_{\str x} : \Sym(W_{\str x}) \to \Sym(\Lambda_{\str x})$. We compute in polynomial time $G_1 = \varphi^{-1}(\Aut(\mathfrak{X}(\str x))) = \varphi^{-1}(\psi_{\str x}^{-1}(\Sym(\Lambda_{\str x}))$. Finally, we update $\varphi$ and $\iota$. The epimorphism is $\varphi: G_1 \to \Alt(\Lambda_{\str x})$, the composition of $\psi_{\str x}$ and a suitable restriction of $\varphi$. The new surjection is $\iota: \Omega \to \binom{\Lambda_{\str x}}{t}$. The reduction will prove beneficial, since by the above argument, $\abs{\Lambda_{\str x}} \ll \sqrt{\abs \Gamma}$. Updating $\Gamma \leftarrow \Lambda_{\str x}$ finalizes this polynomial time reduction.

\paragraph{Colored partitions} Two colored partitions are isomorphic if their color classes and sets are of the same size. This condition can be easily verified. By enumerating the color classes and blocks we construct a $\tau \in \Alt(\Gamma)$ which satisfies $\mathfrak{X}(\str x)^\tau = \mathfrak{X}(\str y)$. Lifting $\tau$ to $\sigma \in \varphi^{-1}(\{\tau\})$ completes the alignment of the two structures. Having mapped out the color classes of $\mathfrak{X}(\str x)$, we rapidly compute $\Aut(\mathfrak{X}(\str x))$ and its $\varphi$-preimage $G_1$. 
We have provided everything for \cref{eq:align-result}. Let us nevertheless see how the reduction continues. Let from now on $\str y \leftarrow \str y^{\sigma^{-1}}$ and $G \leftarrow G_1$. We want to further exploit the color classes in order to reduce the size of the subproblems significantly.

Let $\Delta_1, \dots, \Delta_t$ denote the color classes of $\mathfrak{X}(\str x) = \mathfrak{X}(\str y)$. They form a partition of $\Gamma$. Invoking \cref{proc:pullback}, we obtain a partition $\Omega_1, \dots, \Omega_s$ of $\Omega$. Our strategy is to iteratively process the partitions using the chain rule, cf.\@ \cref{lemma:isoaut}. Considering an additional $\Omega_i$ imposes a stronger condition on the isomorphisms between $\str x$ and $\str y$. For each $1 \leq i \leq s$ we obtain a separate subproblem. For being able to recur, we provide new auxiliary sets $\Gamma'$ and corresponding epimorphisms. With $G_0 \coloneqq G$, the iteration has the following form:
\begin{equation}
\Iso_G^{\Omega_1 \cup \dots \cup \Omega_{i+1}} (\str x, \str y) 
	= \Iso_G^{\Omega_1 \cup \dots \cup \Omega_{i}} (\str x, \str y) \cap  \Iso_G^{\Omega_{i+1}} (\str x, \str y)
	= \Iso_{G_i}^{\Omega_{i+1}} (\str x, \str y), \quad G_i \coloneqq 	\Iso_G^{\Omega_1 \cup \dots \cup \Omega_{i}} (\str x, \str y).
\end{equation}

If there exists a dominant color class, i.e.\@ an $i$ such that $\abs{\Delta_i} > \abs \Gamma/2$, we start with processing $\Delta_i$. It corresponds to one $\Omega_j = \iota^{-1}\binom{\Delta_i}{k}$ satisfying $\abs{\Omega_j} > \abs\Omega/2$. By \cref{def:colored-partition}, $\Delta_i$ decomposes into $\geq 2$ equally large blocks of size $l \geq 2$. Let $\Gamma'$ denote the set of blocks. Any permutations in $\Alt(\Delta_i) = \restrict{\Alt(\Gamma)_{\Delta_i}}{\Delta_i}$ must now respect this partition. This means that it in fact acts as $\Alt(\Gamma')$ on the blocks. Since $\abs{\Gamma'} = \abs{\Delta_i}/l \leq \abs{\Gamma}/2$, we have reduced the problem significantly replacing $\Alt(\Gamma)$ by $\Alt(\Gamma')$. The updated epimorphism $\varphi: G \to \Alt(\Gamma')$ is inferred from the action on the blocks. We process the other color classes as if there was no dominant class.

Let $\Omega_i$ be the set of interest. So far, we have computed $G_{i-1}$. We can assume that $\Omega_i$ is not dominant because we would have otherwise treated it as above. Hence, $\abs{\Omega_i} \leq \abs{\Omega}/2$. The computation of $G_{i} = \Iso^{\Omega_i}_{G_{i-1}}(\str x,\str y)$ results therefore in a single call to the main procedure for strings of length $\leq \abs\Omega/2$.

The reduction is now complete. We will see in the next chapter that we indeed took only quasi-polynomial time.

\chapter{Time complexity}
\label{sec:complexity}

Throughout the algorithm we encountered subroutines of different complexities. Simple operations such as the computation of orbits took polynomial time while the most costly intermediate results required $\abs\Omega^{O(\log \abs\Omega)}$ many steps. Neither of these two groups of operations endanger our desired bound on the execution time. However, we performed a wide range of reductions, cf.\@ \cref{fig:overview}, which require a careful analysis as they incurred multiplicative costs.

We will measure the complexity of the algorithm and its constituents in elementary operations. Recalling \cref{sec:schreier-sims}, we do not need to make a difference between genuinely elementary operations and most group operations. The strategy for obtaining an overall complexity estimate is to look at the complexities of parts of the algorithm. For this purpose we introduce the following quantities:

\begin{defn} \label{def:complex} 
We consider instances of problems for the inputs $\str x, \str y: \Omega \to \Sigma$ and $G \leq \Sym(\Omega)$. Let $n = \abs\Omega$. 
\begin{enumerate}
\item Let $T(x)$ for $x \geq 1$ denote the maximum cost of the main procedure, cf.\@ \cref{sec:overview}, i.e.\@ the number of operations for computing $\Iso_G(\str x, \str y)$ for arbitrary strings of length $n \leq x$.
\item Let $T_{\text{trans}}(x)$ for $x \geq 1$ denote the maximum costs of the main procedure when restricted to transitive input groups with $n \leq x$.
\item Let $T_{\text{Jh}}(x)$ for $x \geq 1$ denote the maximum costs of the constituents of the main procedure that deal with groups acting as Johnson groups, cf.\@ \cref{sec:primitive}. In this case we are equipped with a set $\Gamma$ of size $m$ and an epimorphism $\varphi: G \to \Alt(\Gamma)$ accompanied by a surjection $\iota: \Omega \to \binom{\Gamma}{k}$. We suppose $2 \leq m \leq x$ and $n \leq x$. 
\item Let $T_{\text{Jh}}(x,y)$ for $x \geq y \geq 5$ denote the maximum costs of those constituents of the main procedure fulfilling the conditions for $T_{\text{Jh}}(x)$ and additionally $\log x \ll m \leq y$ for $m = \abs\Gamma$. This is the setting of \cref{sec:agg-cert}. \label{item:complex-jh}
\item Let $T_{\text{struct}}(x,y)$ for $x \geq y \geq 5$ denote the maximum costs of \cref{sec:align} excluding \cref{proc:design}. This is the special case of $T_{\text{Jh}}(x,y)$ when we have found a Johnson scheme or a colored partition.
\item Let $T(x,y)$ for $x \geq y \geq 5$ denote the maximum costs of the main procedure with the restriction that any possibly contained subproblem of Johnson type complies with the conditions in \cref{item:complex-jh} for $x$ and $y$.
\end{enumerate}
\end{defn}

With these quantities we estimate the complexity of the corresponding subroutines. For the main procedure we want to obtain a bound of the form $T(x) \ll \exp\left(\log(x)^{O(1)}\right)$. Each reduction gives us an upper bound for the complexity of the original problem in terms of the complexity of the subproblems it is reduced to. For example, at the beginning of the main procedure we pass from a general group to a transitive group by recurring on orbits. This yields an upper bound for $T$ in terms of $T_{\text{trans}}$, cf.\@ \cref{eq:complex2}.

After some rather simple reduction, we have in \cref{sec:primitive} arrived at the core of the algorithm: the treatment of Johnson groups. Consequently, estimates on $T_{\text{Jh}}$ will be crucial. We distinguish two types of reductions. Either we are able to look at shorter strings passing from $T_{\text{Jh}}(x,y)$ to say $T_{\text{Jh}}(x/2,y)$ or we can reduce the size of the Johnson group that is acting on blocks passing from $T_{\text{Jh}}(x,y)$ to $T_{\text{Jh}}(x,y/2)$, say. In the most favorable situation we can do both simultaneously. The multiplicative costs incurred by these reductions are critical.

A precondition for the treatment of Johnson groups using local certificates is that $\log \abs\Omega \ll \abs\Gamma$, cf.\@ \cref{thm:afforb}. However, if the contrary holds, we can solve the problem brutally by strong Luks reduction, cf.\@ \cref{eq:complex4}.
We note that $T_{\text{Jh}}(x) = T_{\text{Jh}}(x,y)$ and $T(x) = T(x,x)$. 
Overtly, all functions in \cref{def:complex} can be assumed to be monotonically non-decreasing in all their parameters. 

\paragraph{Estimates for the first steps} We will start with the cases distinguished in \cref{sec:first-steps}: That we can solve the problem brutally for sufficiently small strings means that there exists an absolute constant $C > 0$ such that
\begin{equation} \label{eq:complex1}
\forall x \leq C: \ T(x) = O(1).
\end{equation}
Next we consider the case in which $G$ acts intransitively. We perform strong Luks reduction on the orbits, cf.\@ \cref{proc:strong-luks}, recurring on transitive groups. For a string of length $\floor{x}$ the possible partitions into orbits are described by all choices of positive integers $n_i$ such that $\sum n_i = \floor{x}$. We obtain,
\begin{equation} \label{eq:complex2}
T(x) \leq \max\left\{ \sum T_{\text{trans}}(n_i) + x^{O(1)} \right\} 
\end{equation}
where the maximum is taken over all choices of integers $n_i$. The polynomial contribution $x^{O(1)}$ must be added because we have to map out the orbits. The next step was to apply \cref{thm:cameron}. The case of $G/N$ being a Mathieu group (\cref{item:cameron2}) is covered by \cref{eq:complex1}. The other two cases contribute
\begin{equation} \label{eq:complex3}
T_{\text{trans}}(x) \leq \max_{2 \leq m \leq x}\left\{m^{2+\log_2 m}\left( T(x/m) + x^{O(1)} \right),\ m \left(T_{\text{Jh}}(x) + x^{O(1)} \right)\right\}
\end{equation}
with $m = \abs{\mathcal{B}}$ in the notation of \cref{sec:first-steps}. \Cref{item:cameron3} in Cameron-Maróti causes a reduction to $m^{2+\log_2 m}$ subproblems of size $\leq x/m$. If \cref{item:cameron1} holds, we perform weak Luks reduction to an index $m$ subgroup of order $> m^{\log_2 m}$ acting as Johnson group on the coarser blocks $\mathcal{B}'$. The number of blocks is $\abs{\mathcal{B}'} \leq \abs{\mathcal{B}}\leq \abs\Omega \leq x$ in compliance with the definition of $T_{\text{Jh}}(x)$. We further excluded the case $\abs\Gamma \ll \log \abs\Omega$ by recurring on $\abs\Omega^{O(\log \abs\Omega)}$ many subproblems of size $\leq \abs\Omega/2$. Hence,
\begin{equation} \label{eq:complex4}
T_{\text{Jh}}(x) \leq x^{O(\log x)}\left( T(x/2) + 1 \right).
\end{equation}

We proceed with the analysis of the steps in \cref{sec:primitive}. We are in the case of $G$ acting as Johnson group. The bounds that we obtain will be on $T_{\text{Jh}}(x)$. On the trivial case $k=1$ we spend $x^{O(1)}$ steps. After having excluded the trivial case, we compute in polynomial time canonical structures. This leads to either a Luks reduction (large symmetry) or the application of the Design Lemma to a $k$-ary structure with $k \ll \log x$, cf.\@ \cref{proc:design}. We obtain,
\begin{equation} \label{eq:complex5}
T_{\text{Jh}}(x) \leq \max \left\{ O(\log x) \left(T(x/2) + x^{O(1)}\right),\ x^{O(\log x)} \left(T_{\text{struct}}(x,x)+1\right) \right\}.
\end{equation}

\paragraph{Estimates for the case of local certificates} We estimate $T_{\text{Jh}}(x,y)$. However, no reduction on $y$ happens in this part of the algorithm. We can hence trivially estimate $y \leq x$. \Cref{sec:agg-cert} begins with the computation of local certificates. We apply \cref{proc:localcert} in total $x^{O(\log x)}$ times. This incurs $x^{O(\log x)}$ many subproblems of length $\leq x/2$ at additive costs of $x^{O(\log x)}$ because we chose the parameter $k$ such that $\abs\Omega/k \leq \abs\Omega/2$. In case~1 we find a colored partition suitable for \cref{sec:align} in polynomial time. 
Case~2a results in a reduction to linearly many general problems for strings with length $\leq x/2$. The two individualizations in case~2b incur multiplicative costs of $x^{O(1)}$. Afterwards we obtain a canonical binary relational structure and $\leq x$ subproblems of length $\leq x/2$. In case~3 we have to solve a problem of the same size before we apply \cref{proc:localcert3}. Again by the choice of the parameter $k$, \cref{proc:localcert3} has the same complexity as \cref{proc:localcert} noting that the higher factor is absorbed by the implicit constants. The outcome in case~3 is a $k$-ary canonical relational structure with $k \ll \log x$. The administration of the union--find structure is relatively expensive and accounts for an additional quasi-polynomial summand. 

The relational structures computed in cases 2b and 3 need to be treated by the Design Lemma, cf.\@ \cref{proc:design}, before they can be processed by \cref{sec:align}. The necessary individualizations account for a factor of $x^{O(\log x)}$. Polynomial costs are absorbed by the initial costs. Consequently, we get as estimate for $T_{\text{Jh}}(x,y)$,
\begin{center}
\begin{tabular}{ll}
Initial & $\leq x^{O(\log x)}(T(x/2) + 1)$ \\
Case 1  & $\leq T_{\text{struct}}(x,y)$ \\
Case 2a & $\leq 3x T(x/2)$ \\
Case 2b & $\leq x^{O(1)} \left(x^{O(\log x)} \left(T_{\text{struct}}(x,y)+1\right) + xT(x/2) + 1 \right)$ \\
Case 3  & $\leq xT(x/2) +  x^{O(\log x)}(T(x/2) + 1) + x^{O(\log x)} + x^{O(\log x)}\left(T_{\text{struct}}(x,y)+1\right)$
\end{tabular}
\end{center}
The overall estimate for $T_{\text{Jh}}(x,y)$ is the sum of the initial costs and the maximum of the costs for the treatment of the four cases. In total, we obtain
\begin{equation} \label{eq:complex6}
T_{\text{Jh}}(x,y) \leq x^{O(\log x)} \left(T_{\text{struct}}(x,y) +  T(x/2)+1 \right)
\end{equation}
Even if we incorporate \cref{eq:complex4,eq:complex5}, this bound does not enlarge qualitatively. \Cref{eq:complex6} is therefore the overall estimate for $T_{\text{Jh}}(x,y)$ taking all cases of \cref{sec:primitive,sec:agg-cert} into account.

\paragraph{Estimates for the reduction following canonical structures} It remains to estimate $T_{\text{struct}}(x,y)$, i.e.\@ the steps in \cref{sec:align}. We exclude the Design Lemma, cf.\@ \cref{proc:design}, and suppose that the given structures are either canonical colored partitions or Johnson schemes. In the latter case, we obtain a single subproblem of Johnson type, i.e.\@ a subproblem that is treated by \cref{sec:primitive,sec:localcert}. The new set $\Gamma$ is of size $\sqrt{y} \leq y/2$ assuming that $y$ is sufficiently large.
In the former case, we process each of the $\leq x$ color classes separately. If none of the color classes is dominant, we obtain $\leq x$ subproblems of size $\leq x/2$. In the contrary case, the procedure yields additionally a subproblem of Johnson type. In total, we have
\begin{align}
T_{\text{struct}}(x,y) &\leq \max\left\{T_{\text{Jh}}(x,y/2),\ T_{\text{Jh}}(x,y/2) + xT(x/2),\ xT(x/2) \right\} + x^{O(1)} \nonumber \\
&\leq T_{\text{Jh}}(x,y/2) + xT(x/2) + x^{O(1)} \label{eq:complex7}
\end{align}

\paragraph{The bottom line} We can now combine \cref{eq:complex6,eq:complex7} obtaining an overall estimate on $T_{\text{Jh}}$ independent of $T_{\text{struct}}$:
\begin{equation} \label{eq:complex8}
T_{\text{Jh}}(x,y) \leq x^{O(\log x)} \left( T_{\text{Jh}}(x,y/2) + T(x/2) + 1 \right)
\end{equation}
Having this bound at hand, we can derive an overall estimate for the entire algorithm recalling \cref{eq:complex2,eq:complex3}. For simplicity, we introduce the at first glance artificial quantity $T(x,y)$, cf.\@ \cref{def:complex}. Clearly, $T_{\text{Jh}}(x,y) \leq T(x,y)$ holds.
\begin{align}
T(x,y) & \leq x^{O(\log x)} \left(T(x/2, y) +1 \right) + x T_{\text{Jh}}(x, y) \nonumber \\
& \leq x^{O(\log x)}\left( T(x/2, y) + T_{\text{Jh}}(x,y/2)  + 1 \right) \nonumber \\
& \leq x^{O(\log x)}\left( T(x/2, y) + T(x,y/2) + 1 \right) \label{eq:complex13}
\end{align}
In other words, we can reduce the problem to quasi-polynomially many subproblems of half length and of halved Johnson parameter. It remains to solve this recursion. In contrary to other estimates, it is now crucial to scrutinize the implicit constant hidden in the $O$-term. Since it must be independent of $x$ and $y$, we cannot use it to absorb high order terms when applying the recursion. \Cref{thm:complex} provides a solution of \cref{eq:complex13}. We verify the preconditions. The base case for small $x$ is given by \cref{eq:complex1}. For the case of $y$ small and $x$ arbitrary we revert to \cref{eq:complex4}: The recursion $T(x) \leq x^{O(\log x)}(T(x/2)+1)$ implies the generous bound $T(x) \leq x^{O\left(\log x\right)^2} = \exp\left(O(\log x)^3\right)$ under the assumption of \cref{eq:complex1} after $O(\log_2 x)$ many substitutions. The constants $C_0$ and $C_1$ are extracted from the $O$-terms.

\begin{theorem} \label{thm:complex}
Let $T: \mathbb{R}_{\geq 0} \times \mathbb{R}_{\geq 0} \to \mathbb{R}_{\geq 0}$ be monotonically non-decreasing. Suppose that $T$ satisfies for all $x,y \in \mathbb{R}_{\geq 0}$ the recursion
\begin{equation} \label{eq:recrec}
T(x,y) \leq x^{C_0\log x} \left( T(x/2, y) + T(x,y/2) +1 \right)
\end{equation}
for a constant $C_0 > 0$. Furthermore, suppose that there exists a constant $C_1 > 0$ and $x_0, y_0 \in \mathbb{N}$, $x_0,y_0 \geq 2$ such that 
\begin{equation} \label{eq:recbound}
T(x,y) \leq \exp \left( C_1 \left( \log x + \log y \right)^3 \right)
\end{equation}
for all $x \leq x_0$, $y \in \mathbb{R}_{\geq 0}$ and for all $y \leq y_0$, $x \in \mathbb{R}_{\geq 0}$. Then there exists a constant $C > 0$ depending only on $C_0$ and $C_1$ such that \cref{eq:recbound} holds for all $x, y \in \mathbb{R}_{\geq 0}$ with $C$ instead of $C_1$.
\end{theorem}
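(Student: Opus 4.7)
The plan is to prove the bound
\[
T(x,y) \leq \exp\!\left( C (\log x + \log y)^3 \right)
\]
by induction on the integer
\[
N(x,y) \coloneqq \max\bigl(0, \lceil \log_2(x/x_0) \rceil\bigr) + \max\bigl(0, \lceil \log_2(y/y_0) \rceil\bigr),
\]
which counts how many halving steps are needed before one of the two arguments drops into the regime where \cref{eq:recbound} already provides the desired estimate. The constant $C \geq C_1$ will be pinned down at the very end as a function of $C_0$ and $C_1$ only. Throughout, write $L \coloneqq \log x + \log y$.

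The base case $N(x,y) = 0$ is immediate: either $x \leq x_0$ or $y \leq y_0$, so \cref{eq:recbound} yields $T(x,y) \leq \exp(C_1 L^3) \leq \exp(C L^3)$ as soon as $C \geq C_1$. For the inductive step, assume the claim at $(x/2, y)$ and $(x, y/2)$, both of which have strictly smaller $N$-value. Plugging into \cref{eq:recrec} and using monotonicity,
\[
T(x,y) \leq 3\, x^{C_0 \log x} \exp\!\bigl( C (L - \log 2)^3 \bigr),
\]
so taking logarithms it suffices to verify
\[
C_0 (\log x)^2 + \log 3 \leq C\bigl[ L^3 - (L - \log 2)^3 \bigr]
 = C\bigl[ 3 L^2 \log 2 - 3 L (\log 2)^2 + (\log 2)^3 \bigr].
\]
Since $\log x \leq L$ and $L \geq \log(x_0 y_0)$ is bounded below by an absolute constant (because $x_0, y_0 \geq 2$), the right-hand side is at least $\frac{3}{2} C L^2 \log 2$ for $L$ past a fixed threshold; choosing $C$ so large that $\tfrac{3}{2} C \log 2 \geq 2 C_0$ and also $\tfrac{3}{2} C L^2 \log 2 \geq 2 \log 3$ throughout the inductive range absorbs both the quadratic Weisfeiler-Leman-style term and the additive $\log 3$, closing the induction.

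The heart of the argument is thus a comparison of the per-level overhead $C_0 (\log x)^2$, which can be as large as $C_0 L^2$, with the gap $L^3 - (L - \log 2)^3$ opened up by a single halving. The gap grows like $3 L^2 \log 2$, so a cubic ansatz in $L$ has exactly the right shape: the quadratic "slack" produced by one step of induction matches the quadratic cost charged by \cref{eq:recrec}. The main obstacle is therefore not the asymptotic dominance, which is clear, but the calibration of $C$ near the boundary between the base regime and the inductive regime, where $L$ is only slightly larger than $\log x_0 + \log y_0$ and the negative linear correction $-3 C L (\log 2)^2$ is not yet negligible against $3 C L^2 \log 2$. Enlarging $C$ (still as a function of $C_0$ and $C_1$ alone) compensates for this boundary effect, and once $C$ is fixed the induction propagates the bound \cref{eq:recbound} from the base regions to all of $\mathbb{R}_{\geq 0} \times \mathbb{R}_{\geq 0}$.
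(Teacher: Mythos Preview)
Your argument is essentially the paper's: apply the recursion, bound by $3\,x^{C_0\log x}\exp\bigl(C(L-\log 2)^3\bigr)$, expand the cube, and check that the residual quadratic in $L$ is dominated by $3C(\log 2)L^2$ once $L$ is bounded below by an absolute constant. The paper carries this out with an explicit choice $C=\max\{C_1,\,C_0/\log 2\}$ (after replacing $C_0$ by $\max\{C_0,1\}$) and computes the exact threshold on $L$, verifying it lies below $2\log 2$; your version leaves $C$ implicit but the mechanism is identical.

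One slip in your induction setup: the claim that $(x/2,y)$ and $(x,y/2)$ both have strictly smaller $N$-value is false when $y\leq y_0$ (then the $y$-summand of $N$ is already $0$, and halving $y$ cannot decrease it), and symmetrically when $x\leq x_0$. Correspondingly, your assertion $L\geq\log(x_0y_0)$ is not guaranteed for all points with $N(x,y)\geq 1$. The fix is immediate: in the inductive step you may assume $x>x_0$ \emph{and} $y>y_0$, since otherwise the hypothesis~\eqref{eq:recbound} already gives the bound directly; under that restriction both $N(x/2,y)<N(x,y)$ and $N(x,y/2)<N(x,y)$ hold, and $L>\log x_0+\log y_0\geq 2\log 2$ as you need.
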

\begin{proof}
It is crucial that the constant $C$ must be independent of the parameters $x$ and $y$. Thus, when showing the claim inductively, we must ensure that we can choose $C$ uniformly. Of course, $C$ must be greater than $C_1$. By monotonicity we do not need to distinguish integer and non-integer parameters. The proof is based on a double induction: Choose integers $x \geq x_0$, $y \geq y_0$ arbitrarily. We assume that \cref{eq:recbound} holds for all $x', y'$ satisfying $x' \leq x -1$ and $y' \leq y$ or $x' \leq x$ and $y' \leq y -1$. It is to show that \cref{eq:recbound} holds for $x$ and $y$. Let $\lambda(x,y)$ denote $\log x+\log y$. The only available tools are the recursion in \cref{eq:recrec} and the induction hypothesis:
\begin{align}
T(x, y) & \leq  x^{C_0\log x} \left( T(x/2, y) + T(x,y/2) +1 \right) \nonumber \\
& \leq 3 x^{C_0\log x} \exp \left( C \left( \lambda(x,y) - \log 2\right)^3 \right) \nonumber \\
& \leq \exp \left( C \lambda(x,y)^3 + (C_0 - 3C \log 2)\lambda(x,y)^2 + 3C (\log 2)^2\lambda(x,y) - C(\log 2)^3 + \log 3\right) \label{eq:complexind}
\end{align}

Weakening the assumption, we set $C_0 \coloneqq \max\{C_0, 1\}$. This will simplify what follows. We choose $C \coloneqq \max\left\{C_1, C_0/\log{2} \right\} > C_0/(3\log 2)$ independently of $x$ and $y$. Now the quadratic polynomial in $\lambda(x,y)$ in \cref{eq:complexind} tends to $-\infty$ for $x,y\to \infty$. Hence, there must exist a lower bound $\xi$ for $x$ and $y$ such that this polynomial is negative. Problematically, this bound depends on $C_0$ imposing an a priori nonviable condition on $x_0$ and $y_0$. However, we can elementarily compute this lower bound as
\[
\xi \coloneqq \exp \left( \frac{3 C_0 \log{2} + \sqrt{{\left(\log{ 2}\right)^{2}}\, {{C_0}^{2}}+8 C_0 \log{3}} }{8 C_0} \right).
\]
This decreases monotonically approaching $\sqrt{2}$ for $C_0 \to \infty$. Under the assumption $C_0 \geq 1$, we obtain $\xi < 2$. Thus, supposing \cref{eq:recbound} for $x_0, y_0 \geq 2$ suffices. This is an absolute condition on $x_0$ and $y_0$. We can now estimate the polynomial in \cref{eq:complexind} by zero from above completing the induction.
\end{proof}
By definition, $y \leq x$. We hence conclude that the overall execution time is bounded by
\[
T(x) \leq \exp \left(C \left(\log x\right)^3 \right)
\]
for a constant $C > 0$. Babai's algorithm for deciding the String Isomorphism Problem is of quasi-polynomial complexity. As a corollary, we obtain the desired bound on the complexity of the Graph Isomorphism Problem.

\chapter*{Acknowledgements}
\addcontentsline{toc}{chapter}{Acknowledgements}
I wholeheartedly thank my supervisors Prof.~Dr.~Harald Andrés Helfgott and Prof.~Dr.~Stephan Waack for the opportunity of writing this thesis, their suggestions, comments and feedback. Moreover, I thank Daniele Dona for his helpful explanations. Finally, I thank Johannes Hochwart, Olivia Howe and Katarina Hahn for their corrections and comments.

\bibliography{literature}
\bibliographystyle{amsalpha}
\end{document}